\newtheorem{thm}{Theorem}[section]
\newtheorem{cor}[thm]{Corollary}
\newtheorem{lem}[thm]{Lemma}
\newtheorem{prop}[thm]{Proposition}
\newtheorem{rem}[thm]{Remark}
\numberwithin{equation}{section}
\newcommand{\mbb}{\mathbb}
\newcommand{\ov}{\overline}
\newcommand{\Om}{\Omega}
\begin{document}
\title[Weighted Szeg\H{o} kernels]{Weighted Szeg\H{o} kernels on planar domains}
	\keywords{weighted Szeg\H{o} and Garabedian kernel, weighted Kerzman--Stein formula, reduced Bergman kernel}
    \thanks{The first named author was supported in part by the PMRF Ph.D. fellowship of the Ministry of Education, Government of India.}
	\subjclass[2020]{Primary: 30C40; Secondary: 31A99}
	\author{Aakanksha Jain, Kaushal Verma} 
\address{AJ: Department of Mathematics, Indian Institute of Science, Bengaluru-560012, India}
\email{aakankshaj@iisc.ac.in}

\address{KV: Department of Mathematics, Indian Institute of Science, Bengaluru-560012, India}
\email{kverma@iisc.ac.in}

\begin{abstract}
We study properties of weighted Szeg\H{o} and Garabedian kernels on planar domains. Motivated by the unweighted case as explained in Bell's work, the starting point is a weighted Kerzman--Stein formula that yields boundary smoothness of the weighted Szeg\H{o} kernel. This provides information on the dependence of the weighted Szeg\H{o} kernel as a function of the weight. When the weights are close to the constant function $1$ (which corresponds to the unweighted case), it is shown that some properties of the unweighted Szeg\H{o} kernel propagate to the weighted Szeg\H{o} kernel as well. Finally, it is shown that the reduced Bergman kernel and higher order reduced Bergman kernels can be written as a rational combination of three unweighted Szeg\H{o} kernels and their conjugates, thereby extending Bell's list of kernel functions that are made up of simpler building blocks that involve the Szeg\H{o} kernel.
\end{abstract}

\maketitle

\section{Introduction}

\noindent Bell's work builds a paradigm that reveals the building blocks for several kernel functions associated with finitely connected domains in the complex plane. The meta theorem that emerges from this paradigm is this -- kernel functions, when viewed as functions of two complex variables, can be realized as combinations (rational or algebraic) of finitely many functions of one complex variable and their conjugates. To cite an example, \cite{Be1} shows that the classical Bergman and Szeg\H{o} kernels associated with an $n$-connected domain in the plane with no isolated boundary points can be realized as rational combinations of three holomorphic functions of a single variable and their conjugates. Other manifestations of this phenomenon exist; they have been systematically studied in \cite{Be2, Be3} among others. 

\medskip

The primary motivation for this note goes back to \cite{Be1}. We study various aspects of weighted Szeg\H{o} kernels on planar domains with no isolated boundary point. As lucidly explained in \cite{Be0}, the Kerzman--Stein formula provides an elegant pathway to understand kernel functions in the non-weighted (or equivalently, classical) case. The first thing, therefore, is to write down a weighted version of this formula and this is done in Section $\ref{Kerzman-Stein}$. Section $\ref{Szego_kernel}$ looks at smoothness to the boundary properties of the weighted Szeg\H{o} kernel. While in the classical case, this is a consequence of pseudolocal estimates for the Cauchy transform and is explained in Section $24$ of \cite{Be0}, the same question for the weighted Szeg\H{o} kernel does not seem to have been addressed so far.  A consequence of this is a Ramadanov type theorem for the weighted Szeg\H{o} kernels when viewed as functions of the weights. Of particular interest here is the situation when the family of weights is close to the constant function $\bf 1$, which corresponds to the classical case. The stability results in Section $\ref{weights_constant}$ imply that properties of the classical Szeg\H{o} kernel carry over to the weighted Szeg\H{o} kernel when the weights are sufficiently close to constant function $\bf 1$ -- this validates an expected phenomenon. Finally, in Section $\ref{Reduced_Bergman}$, we look at higher-order reduced Bergman kernels and use the methods of \cite{Be1} to show that they can also be represented as rational combinations of three classical Szeg\H{o} kernels viewed as functions of one complex variable and their conjugates. This extends the list of kernels that possess the property of being expressible in terms of simpler functions.

\medskip

Unless stated otherwise in what follows, $\Omega$ will denote a bounded $n$-connected domain in the plane with $C^{\infty}$ smooth boundary and $\varphi$ a positive real-valued $C^{\infty}$ function on $\partial\Omega$.\footnote{When defining objects with respect to a weight $\varphi$, we shall put $\varphi$ as a subscript. When $\varphi\equiv 1$, the subscript will be dropped.}

\medskip

\noindent\textbf{Acknowledgment.} The authors would like to thank Steven R. Bell for his encouragement, valuable email exchanges, and particularly his help in the proof of Theorem \ref{Bell1} and Corollary \ref{Bell2}. The authors also thank the reviewer for useful inputs that helped improve the article.

\section{The Kerzman--Stein formula with weights}\label{Kerzman-Stein}

Following \cite{Be0}, \cite{Be1}, let $L^2_{\varphi}(\Omega)$ be the Hilbert space of complex-valued functions on $\partial\Omega$ that are square integrable with respect to the arc length measure $\varphi\,ds$. Here, $ds$ is the differential of arc length and is given by  $ds = \vert z'(t) \vert \; dt$ where $z = z(t)$ is a smooth parametrization of $\partial \Omega$. In terms of the complex unit tangent vector function $T(z(t)) = z'(t) / \vert z'(t) \vert$, $dz = z'(t) \; dt = T ds$. The inner product on $L^2_{\varphi}(\partial\Omega)$ is
\[
\langle u,v\rangle_{\varphi}=\int_{\partial\Omega}u\bar{v}\,\varphi\,ds\quad \text{for }u,v\in L^2_{\varphi}(\partial\Omega).
\]
When $\varphi \equiv 1$, this reduces to the standard inner product $\langle u, v \rangle$. Note that $L^2_{\varphi}(\partial\Omega)=L^2(\partial\Omega)$ as sets. Also, $C^{\infty}(\partial\Omega)$ is dense in $L^2_{\varphi}(\partial\Omega)$ with respect to its norm. For $u\in C^{\infty}(\partial\Omega)$, the Cauchy transform of $u$ is 
\[
(\mathcal{C}u)(z)=\frac{1}{2\pi i}\int_{\partial\Omega}\frac{u(\zeta)}{\zeta-z}d\zeta,\quad z\in\Omega
\]
which is holomorphic on $\Omega$. For $a \in \Omega$ and $z \in \partial \Omega$, let $C_a(z)$ be the conjugate of 
\[
\frac{1}{2 \pi i} \frac{T(z)}{z-a}.
\]
Then, $C_a$ is the Cauchy kernel which defines the Cauchy transform $\mathcal C$ in the sense that $\mathcal C u(z) = \langle u, C_z \rangle$. The weighted Cauchy kernel is defined to be $\varphi^{-1} C_a$ and the corresponding weighted Cauchy transform $\mathcal C_{\varphi}u$ satisfies
\[
(\mathcal C_{\varphi} u)(z) = \langle u, \varphi^{-1} C_z \rangle_{\varphi} = \langle u, C_z \rangle = (\mathcal C u)(z)
\]
which shows that $\mathcal C_{\varphi} = \mathcal C$.

\medskip

Let $A^{\infty}(\Omega)$ denote the space of holomorphic functions on $\Omega$ that are in $C^{\infty}(\ov{\Omega})$. The Cauchy transform $\mathcal C$ maps $C^{\infty}(\partial\Omega)$ into $A^{\infty}(\Omega)$  and this allows the Cauchy transform to be viewed as an operator from $C^{\infty}(\partial\Omega)$ into itself.

\medskip

Let $u,v\in C^{\infty}(\partial\Omega)$ be arbitrary. We wish to construct a function $\mathcal{C}^*_{\varphi}v$ in $C^{\infty}(\partial\Omega)$ such that $\langle\mathcal{C}u,v\rangle_{\varphi}=\langle u,\mathcal{C}^{*}_{\varphi}v\rangle_{\varphi}$ for all $u\in C^{\infty}(\partial\Omega)$. It is known that (see \cite{Be0})
\[
\langle \mathcal{C}u,v\rangle=\langle u,v-\ov{T}\overline{\mathcal{C}(\bar{v}\ov{T})}\rangle
\]
for all $u,v\in C^{\infty}(\partial \Omega)$. Therefore,
\[
\langle\mathcal{C}u,v\rangle_{\varphi}
=
\langle \mathcal{C}u, v\varphi\rangle
=
\langle u,v\varphi-\overline{T}\overline{\mathcal{C}(\overline{v\varphi}\overline{T})}\rangle
=
\langle u,v-\overline{T}\varphi^{-1}\overline{\mathcal{C}(\overline{v\varphi}\overline{T})}\rangle_{\varphi}.
\]
Thus, for $v\in C^{\infty}(\partial\Omega)$, define
\begin{equation}
\mathcal{C}^{*}_{\varphi}v=v-\overline{T}\varphi^{-1}\overline{\mathcal{C}(\overline{v\varphi}\overline{T})}
\end{equation}
which shows that $\mathcal{C}^{*}_{\varphi}v\in C^{\infty}(\partial\Omega)$ and $\langle\mathcal{C}u,v\rangle_{\varphi}=\langle u,\mathcal{C}^{*}_{\varphi}v\rangle_{\varphi}$ for all $u,v\in C^{\infty}(\partial\Omega)$.

\medskip 

Let $A^{\infty}(\partial\Omega)$ denote the set of functions on $\partial\Omega$ that are the boundary values of functions in $A^{\infty}(\Omega)$. The Hardy space $H^2(\partial\Omega)$ is defined to be the closure in $L^2(\partial\Omega)$ of $A^{\infty}(\partial\Omega)$ and members of the Hardy space are in one-to-one correspondence with the space of holomorphic functions on $\Omega$ with $L^2$ boundary values. The Hardy space can be identified in a natural way with the subspace of $L^2_{\varphi}(\partial\Omega)$, equal to the closure of $A^{\infty}(\partial\Omega)$ in that space. Thus, there is no need to define $H^2_{\varphi}(\partial\Omega)$ separately. But we shall use the notation $H^2_{\varphi}(\partial\Omega)$ whenever there is a need to emphasize that the Hardy space is endowed with the inner product $\langle \cdot,\cdot\rangle_{\varphi}$. 

\medskip

The orthogonal projection $P_{\varphi}$ from $L^2_{\varphi}(\partial\Omega)$ onto $H^2(\partial\Omega)$ is the weighted Szeg\H{o} projection. The Cauchy Transform and the weighted Szeg\H{o} projection are related by the weighted Kerzman-Stein formula as follows, in which $\mathcal{A}_{\varphi} = \mathcal{C}-\mathcal{C}^{*}_{\varphi}$ is the weighted Kerzman-Stein operator.

\medskip

\begin{prop}\label{kzfor}
$P_{\varphi}(I+\mathcal{A}_{\varphi})=\mathcal{C}$ on $C^{\infty}(\partial\Omega)$,
where $I$ denotes the identity operator.
\end{prop}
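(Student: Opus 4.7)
The plan is to expand the left-hand side using $\mathcal{A}_{\varphi} = \mathcal{C} - \mathcal{C}^{*}_{\varphi}$ and reduce the claim to a single orthogonality statement that can be verified by testing against a dense subspace of $H^{2}(\partial\Omega)$.

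First I would write
\[
P_{\varphi}(I + \mathcal{A}_{\varphi}) \;=\; P_{\varphi} + P_{\varphi}\mathcal{C} - P_{\varphi}\mathcal{C}^{*}_{\varphi}.
\]
Because $\mathcal{C}$ maps $C^{\infty}(\partial\Omega)$ into $A^{\infty}(\Omega)$, the boundary values of $\mathcal{C}u$ lie in $A^{\infty}(\partial\Omega) \subset H^{2}(\partial\Omega)$, so $P_{\varphi}\mathcal{C} = \mathcal{C}$ as operators on $C^{\infty}(\partial\Omega)$. The proposition therefore reduces to $P_{\varphi}\mathcal{C}^{*}_{\varphi} = P_{\varphi}$, equivalently, to showing that $(\mathcal{C}^{*}_{\varphi}-I)u$ is $\langle\cdot,\cdot\rangle_{\varphi}$-orthogonal to $H^{2}(\partial\Omega)$ for every $u \in C^{\infty}(\partial\Omega)$.

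To verify this orthogonality, I would test against the dense subspace $A^{\infty}(\partial\Omega)$ of $H^{2}(\partial\Omega)$. For $h \in A^{\infty}(\partial\Omega)$, the Cauchy integral formula gives $\mathcal{C}h = h$ on $\partial\Omega$. Conjugating the defining adjoint relation $\langle \mathcal{C}u, v\rangle_{\varphi} = \langle u, \mathcal{C}^{*}_{\varphi}v\rangle_{\varphi}$ to swap the arguments, I obtain
\[
\langle \mathcal{C}^{*}_{\varphi}u, h\rangle_{\varphi} \;=\; \langle u, \mathcal{C}h\rangle_{\varphi} \;=\; \langle u, h\rangle_{\varphi},
\]
so $\langle (\mathcal{C}^{*}_{\varphi}-I)u, h\rangle_{\varphi} = 0$. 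Density of $A^{\infty}(\partial\Omega)$ in $H^{2}(\partial\Omega)$ then completes the reduction, because $P_{\varphi}\mathcal{C}^{*}_{\varphi}u$ and $P_{\varphi}u$ both already lie in $H^{2}(\partial\Omega)$.

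I do not anticipate any serious obstacle. The one point requiring mild care is the bookkeeping that turns the given adjoint identity into $\langle \mathcal{C}^{*}_{\varphi}u, h\rangle_{\varphi} = \langle u, \mathcal{C}h\rangle_{\varphi}$; this is precisely where the reproducing property of the Cauchy transform on $A^{\infty}(\partial\Omega)$ is invoked. Everything else is formal manipulation of the projection and the definition of $\mathcal{A}_{\varphi}$.
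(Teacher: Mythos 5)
Your proof is correct, and it reaches the key orthogonality statement by a genuinely different route than the paper. Both arguments share the same skeleton: since $\mathcal{C}u\in A^{\infty}(\Omega)\subset H^{2}(\partial\Omega)$, the claim reduces to showing that $(I-\mathcal{C}^{*}_{\varphi})u$ is $\langle\cdot,\cdot\rangle_{\varphi}$-orthogonal to $H^{2}(\partial\Omega)$. The paper establishes this concretely: it plugs in the explicit formula $\mathcal{C}^{*}_{\varphi}u=u-\overline{T}\varphi^{-1}\overline{\mathcal{C}(\overline{u\varphi}\,\overline{T})}$, so that $(I-\mathcal{C}^{*}_{\varphi})u=\overline{T}\varphi^{-1}\overline{\mathcal{C}(\overline{u\varphi}\,\overline{T})}$ is visibly of the form $\varphi^{-1}\overline{HT}$ with $H\in A^{\infty}(\Omega)$, hence orthogonal to $H^{2}(\partial\Omega)$ in the standard pairing and therefore weighted-orthogonal after the factor $\varphi^{-1}$ is absorbed. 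You instead argue abstractly: you never use the explicit formula, only the defining adjoint identity $\langle\mathcal{C}h,u\rangle_{\varphi}=\langle h,\mathcal{C}^{*}_{\varphi}u\rangle_{\varphi}$ together with the reproducing property $\mathcal{C}h=h$ on $A^{\infty}(\partial\Omega)$ and density of $A^{\infty}(\partial\Omega)$ in $H^{2}(\partial\Omega)$. Your computation checks out: taking $v=u$, $w=h$ in the adjoint relation and conjugating gives $\langle\mathcal{C}^{*}_{\varphi}u,h\rangle_{\varphi}=\langle u,\mathcal{C}h\rangle_{\varphi}=\langle u,h\rangle_{\varphi}$, and the passage from the dense subspace to all of $H^{2}(\partial\Omega)$ is legitimate because $\langle(\mathcal{C}^{*}_{\varphi}-I)u,\cdot\rangle_{\varphi}$ is a continuous functional. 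What your version buys is generality and economy of hypotheses -- it would apply verbatim to any bounded operator fixing a dense subspace of $H^{2}$ whose formal adjoint exists on $C^{\infty}(\partial\Omega)$; what the paper's version buys is an explicit identification of the orthogonal complement piece, which is then reused in the rest of Section 2 (the Kerzman--Stein kernel computation and the decomposition $\varphi^{-1}C_{a}=S_{\varphi}(\cdot,a)+\varphi^{-1}\overline{H_{a}T}$). No gaps.
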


\begin{proof}
For $u\in C^{\infty}(\partial\Omega)$, 
\[
\left(I+(\mathcal{C}-\mathcal{C}^*_{\varphi}) \right)u
=
u+\mathcal{C}u- \left(u-\overline{T}\varphi^{-1}\overline{\mathcal{C}(\overline{u\varphi}\overline{T})} \right)
=
\mathcal{C}u+\overline{T}\varphi^{-1}\overline{\mathcal{C}(\overline{u\varphi}\overline{T})}.
\]
Since $u\phi T\in C^{\infty}(\partial\Omega)$, $\mathcal{C}(\overline{u\varphi}\overline{T})\in A^{\infty}(\Omega)$. Therefore, $\overline{T}\overline{\mathcal{C}(\overline{u\varphi}\overline{T})}$ is orthogonal to $H^2(\partial\Omega)$ with respect to the standard inner product. This implies that $\overline{T}\varphi^{-1}\overline{\mathcal{C}(\overline{u\varphi}\overline{T})}$ is orthogonal to $H^2(\partial\Omega)$ with respect to the weighted inner product. Since $\mathcal{C}u\in A^{\infty}(\Omega)\subset H^2(\partial\Omega)$,
\begin{equation}
(P_{\varphi} (I+\mathcal{A}_{\varphi}))(u)=\mathcal{C}u.
\end{equation}
and this completes the proof. 
\end{proof}

\begin{prop}\label{kzkernel} The weighted Kerzman-Stein operator $\mathcal{A}_{\varphi}$ is represented by a kernel $A_{\varphi}(\cdot,\cdot)$ in $C^{\infty}(\partial\Omega\times\partial\Omega)$. That is,
\begin{equation}
(\mathcal{A}_{\varphi}u)(z)=\int_{\zeta\in\partial\Omega} A_{\varphi}(z,\zeta) u(\zeta)\varphi(\zeta)\,ds.
\end{equation}
The kernel is
\begin{equation}
A_{\varphi}(z,\zeta)=\frac{1}{2\pi i}\left(\frac{T(\zeta)\varphi^{-1}(\zeta)}{\zeta-z}-\frac{\overline{T(z)}\varphi^{-1}(z)}{\bar{\zeta}-\bar{z}}\right),\quad z,\zeta\in\partial\Omega
\end{equation}
and will be called the weighted Kerzman-Stein kernel.
\end{prop}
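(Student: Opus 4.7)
The plan is to compute the boundary values of $\mathcal{C}v$ and $\mathcal{C}^{*}_{\varphi}v$ explicitly using the Sokhotski--Plemelj jump formula, read off the kernel of $\mathcal{A}_{\varphi} = \mathcal{C} - \mathcal{C}^{*}_{\varphi}$ from the difference, and then verify that the apparent $1/(\zeta-z)$ singularities of the two pieces cancel, leaving a smooth kernel on $\partial\Omega\times\partial\Omega$.

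For $v\in C^{\infty}(\partial\Omega)$ and $z\in\partial\Omega$, the Sokhotski--Plemelj formula gives the interior boundary value
\[
\mathcal{C}v(z) = \tfrac{1}{2}v(z) + \mathrm{p.v.}\!\int_{\partial\Omega}\frac{T(\zeta)\varphi^{-1}(\zeta)}{2\pi i(\zeta-z)}\,v(\zeta)\varphi(\zeta)\,ds,
\]
after writing $d\zeta = T(\zeta)\,ds$ and inserting $\varphi^{-1}(\zeta)\varphi(\zeta)=1$. Applying the same jump formula to $\mathcal{C}(\overline{v\varphi}\,\overline{T})$, using $\overline{T}T=1$ inside the integrand, taking complex conjugates, multiplying by $-\overline{T(z)}\varphi^{-1}(z)$, and finally adding $v(z)$ as in the definition of $\mathcal{C}^{*}_{\varphi}v$, the boundary jump combines with the explicit $v(z)$ term to yield
\[
\mathcal{C}^{*}_{\varphi}v(z) = \tfrac{1}{2}v(z) + \mathrm{p.v.}\!\int_{\partial\Omega}\frac{\overline{T(z)}\varphi^{-1}(z)}{2\pi i(\overline{\zeta}-\overline{z})}\,v(\zeta)\varphi(\zeta)\,ds.
\]
Subtracting then identifies $A_{\varphi}(z,\zeta)$ as displayed in the statement and realises $\mathcal{A}_{\varphi}v$ as a principal-value integral against $v\varphi\,ds$.

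The main obstacle is to show that $A_{\varphi}$ extends to a $C^{\infty}$ function across the diagonal, so that the principal value reduces to an ordinary integral and the operator is honestly smoothing. The approach is the algebraic splitting
\[
A_{\varphi}(z,\zeta) = \frac{T(\zeta)\bigl(\varphi^{-1}(\zeta)-\varphi^{-1}(z)\bigr)}{2\pi i(\zeta-z)} + \varphi^{-1}(z)\,A(z,\zeta),
\]
where $A(z,\zeta)$ is the unweighted Kerzman--Stein kernel. The second summand is smooth on $\partial\Omega\times\partial\Omega$ by the classical result recalled in \cite{Be0}. For the first, one parametrises $\partial\Omega$ by arc length and notes that the difference quotient $(\varphi^{-1}(\zeta)-\varphi^{-1}(z))/(\zeta-z)$ is, near the diagonal, the ratio of two smooth functions vanishing to first order in $s-t$ with nonvanishing derivative, and so is itself smooth; away from the diagonal smoothness is immediate. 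I expect this divided-difference step to be the only point requiring real care, since the cancellation of the $\tfrac{1}{2}v(z)$ jumps and the assembly of $A_{\varphi}$ are purely algebraic once the Sokhotski--Plemelj formula is in hand.
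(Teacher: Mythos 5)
Your derivation of the kernel is correct and coincides with the paper's: both apply the Sokhotski--Plemelj jump formula, observe that the $\tfrac12 v(z)$ jumps cancel against the explicit $v(z)$ in the definition of $\mathcal{C}^{*}_{\varphi}$, and read off $A_{\varphi}$ from the resulting principal-value integral against $v\varphi\,ds$. Where you genuinely depart from the paper is the smoothness step. The paper disposes of it by asserting that the argument of Section~5 of Bell's book (for the unweighted kernel) adapts straightforwardly to the weighted case. You instead use the exact algebraic splitting
\[
A_{\varphi}(z,\zeta) \;=\; \frac{1}{2\pi i}\,\frac{T(\zeta)\bigl(\varphi^{-1}(\zeta)-\varphi^{-1}(z)\bigr)}{\zeta-z} \;+\; \varphi^{-1}(z)\,A(z,\zeta),
\]
which is easily verified, quote the classical smoothness of the unweighted kernel $A(z,\zeta)$ as a black box, and reduce the new content to the smoothness of a divided difference. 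That last point is sound: parametrizing by arc length, $\bigl(\varphi^{-1}(z(s))-\varphi^{-1}(z(t))\bigr)/(z(s)-z(t))$ factors as $\bigl[(g(s)-g(t))/(s-t)\bigr]\cdot\bigl[(s-t)/(z(s)-z(t))\bigr]$, each factor smooth near the diagonal by Hadamard's lemma since $z'(t)\neq 0$, and off the diagonal (including pairs on distinct boundary components) there is nothing to prove. Your route buys a self-contained, checkable proof of smoothness that cleanly isolates the weighted contribution, rather than re-running Bell's cancellation argument with $\varphi$ inserted; the paper's route avoids introducing the splitting but leaves the reader to carry out the adaptation. Both are valid; yours is arguably the more transparent way to make the paper's ``straightforward adaptation'' precise.
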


\begin{proof}
To understand $\mathcal{A}_{\varphi}=\mathcal{C}-\mathcal{C}^*_{\varphi}$, for $u\in C^{\infty}(\partial\Omega)$ and $z_0\in\partial\Omega$, let
\[
(\mathcal{H}u)(z_0) = \textbf{P.V.}\,\frac{1}{2\pi i}\int_{\partial\Omega} \frac{u(\zeta)}{\zeta-z_0}\,d\zeta.
\]
$\mathcal{H}u$ is well defined and Plemelj's theorem shows that
\[
\mathcal{C}u(z_0)=\frac{1}{2}u(z_0) + (\mathcal{H}u)(z_0).
\]
Therefore, for $u\in C^{\infty}(\partial\Omega)$,
\begin{eqnarray*}
\mathcal{A}_{\varphi}u &=& \mathcal{C}u-u+\overline{T}\varphi^{-1}\overline{\mathcal{C}(\overline{u\varphi}\overline{T})}
\\
&=& \frac{1}{2}u + \mathcal{H}u-u+\overline{T}\varphi^{-1}\left(\overline{\frac{1}{2}\overline{u\varphi}\overline{T}+\mathcal{H}(\overline{u\varphi}\overline{T})}\right)
\\
&=& \mathcal{H}u + \overline{T}\varphi^{-1}\overline{\mathcal{H}(\overline{u\varphi}\overline{T})}.
\end{eqnarray*}
So, for $z\in\partial\Omega$, we have
\begin{eqnarray*}
(\mathcal{A}_{\varphi}u)(z) &=& \textbf{P.V.}\,\frac{1}{2\pi i}\int_{\partial\Omega} \frac{u(\zeta)}{\zeta-z}\,d\zeta + \overline{T(z)}\varphi^{-1}(z)\left(\overline{\textbf{P.V.}\,\frac{1}{2\pi i}\int_{\partial\Omega} \frac{(\overline{u\varphi T})(\zeta)}{\zeta-z}\,d\zeta}\right)
\\
&=& \textbf{P.V.}\,\frac{1}{2\pi i}\int_{\zeta\in\partial\Omega} \frac{u(\zeta)T(\zeta)}{\zeta-z}\,ds - \overline{T(z)}\varphi^{-1}(z) \left(\textbf{P.V.}\,\frac{1}{2\pi i}\int_{\zeta\in\partial\Omega} \frac{u(\zeta)\varphi(\zeta)}{\bar{\zeta}-\bar{z}}\,ds\right)
\\
&=& \textbf{P.V.}\,\frac{1}{2\pi i}\int_{\zeta\in\partial\Omega}\left[\frac{T(\zeta)\varphi^{-1}(\zeta)}{\zeta-z}-\frac{\overline{T(z)}\varphi^{-1}(z)}{\bar{\zeta}-\bar{z}}\right]u(\zeta)\varphi(\zeta)\,ds.
\end{eqnarray*}
For $z,\zeta\in\partial\Omega$, let
\[
A_{\varphi}(z,\zeta)=\frac{1}{2\pi i}\left(\frac{T(\zeta)\varphi^{-1}(\zeta)}{\zeta-z}-\frac{\overline{T(z)}\varphi^{-1}(z)}{\bar{\zeta}-\bar{z}}\right).
\]
The $C^{\infty}$-smoothness of $A_{\varphi}(z,\zeta)$ follows from a straightforward adaptation of the reasoning in Section $5$ of \cite{Be0} that deals with the unweighted version of the Kerzman--Stein kernel. Since $A_{\varphi}\in C^{\infty}(\partial\Omega\times\partial\Omega)$, the principal value integral above is a standard integral. Therefore,
\[
(\mathcal{A}_{\varphi}u)(z)=\int_{\zeta\in\partial\Omega} A_{\varphi}(z,\zeta) u(\zeta)\varphi(\zeta)\,ds.
\]
\end{proof}

\begin{thm}
The Cauchy transform $\mathcal{C}_{\varphi} \;(\text {which equals} \;\mathcal C)$ extends to a bounded operator from $L^2_{\varphi}(\partial\Omega)$ onto $H^2(\partial\Omega)$. Hence,
\begin{enumerate}
    \item $\mathcal{C}^{*}_{\varphi}$ extends to be a bounded operator from $L^2_{\varphi}(\partial\Omega)$ to itself.
    \item The identity $\langle \mathcal{C}_{\varphi}u,v\rangle_{\varphi}=\langle u,\mathcal{C}^{*}_{\varphi}v\rangle_{\varphi}$ holds for all $u,v\in L^2_{\varphi}(\partial\Omega)$. Therefore, $\mathcal{C}^{*}_{\varphi}$ is the $L^2_{\varphi}$ adjoint of $\mathcal{C}_{\varphi}$.
    \item The Kerzman-Stein formula $P_{\varphi}(I+\mathcal{A}_{\varphi})=\mathcal{C}_{\varphi}$ holds on $L^2_{\varphi}(\partial\Omega)$.
\end{enumerate}
\end{thm}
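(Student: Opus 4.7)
The plan is to push the identity $\mathcal{C} = P_\varphi(I + \mathcal{A}_\varphi)$ from Proposition \ref{kzfor}, which is known on the dense subspace $C^\infty(\partial\Omega) \subset L^2_\varphi(\partial\Omega)$, to all of $L^2_\varphi(\partial\Omega)$ by a density argument, and then deduce items (1)--(3) as essentially formal consequences.

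First, I would observe that Proposition \ref{kzkernel} presents $\mathcal{A}_\varphi$ as an integral operator against a kernel in $C^\infty(\partial\Omega \times \partial\Omega)$. Since $\partial\Omega$ is compact and $\varphi$ is bounded above and below by positive constants, the kernel $A_\varphi(z,\zeta)$ is square integrable with respect to the product measure $\varphi(z)\varphi(\zeta)\, ds(z)\, ds(\zeta)$, so that $\mathcal{A}_\varphi$ is Hilbert--Schmidt, hence a bounded (indeed compact) operator on $L^2_\varphi(\partial\Omega)$. The weighted Szeg\H{o} projection $P_\varphi$ is an orthogonal projection and so a contraction on $L^2_\varphi(\partial\Omega)$. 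Composing, $P_\varphi(I + \mathcal{A}_\varphi)$ is a bounded operator from $L^2_\varphi(\partial\Omega)$ into $H^2(\partial\Omega)$ that agrees with $\mathcal{C}$ on $C^\infty(\partial\Omega)$; this yields the desired bounded extension of $\mathcal{C}_\varphi = \mathcal{C}$ to $L^2_\varphi(\partial\Omega)$, with range contained in $H^2(\partial\Omega)$.

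Surjectivity onto $H^2(\partial\Omega)$ follows by checking that the extended operator fixes $H^2$. For $h \in A^\infty(\partial\Omega)$ with holomorphic extension $H \in A^\infty(\Omega)$, the Cauchy integral formula gives $\mathcal{C}h(z) = H(z)$ for $z \in \Omega$, whose boundary values return $h$; hence $\mathcal{C}h = h$ on $\partial\Omega$. Density of $A^\infty(\partial\Omega)$ in $H^2(\partial\Omega)$ combined with continuity of the extended $\mathcal{C}$ then forces $\mathcal{C}|_{H^2(\partial\Omega)} = \mathrm{id}$, so the image is all of $H^2(\partial\Omega)$.

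The three consequences are routine. For (1), the defining relation $\mathcal{A}_\varphi = \mathcal{C} - \mathcal{C}^*_\varphi$ gives $\mathcal{C}^*_\varphi = \mathcal{C} - \mathcal{A}_\varphi$, a difference of bounded operators on $L^2_\varphi(\partial\Omega)$. For (2), the adjoint identity $\langle \mathcal{C}u, v\rangle_\varphi = \langle u, \mathcal{C}^*_\varphi v\rangle_\varphi$ was established on the dense set $C^\infty(\partial\Omega) \times C^\infty(\partial\Omega)$ in the construction of $\mathcal{C}^*_\varphi$, and both sides are continuous bilinear forms on $L^2_\varphi(\partial\Omega) \times L^2_\varphi(\partial\Omega)$, so the identity extends. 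For (3), the equality of Proposition \ref{kzfor} on the dense subspace $C^\infty(\partial\Omega)$ extends by continuity since both sides are now bounded on $L^2_\varphi(\partial\Omega)$. I do not anticipate any genuine obstacle: the only non-bookkeeping step is converting the smoothness of $A_\varphi$, together with the positivity and boundedness of $\varphi$, into $L^2_\varphi$-boundedness of $\mathcal{A}_\varphi$, which is where compactness of $\partial\Omega$ enters decisively.
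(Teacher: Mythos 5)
Your proposal is correct and follows essentially the same route as the paper: boundedness of $\mathcal{A}_{\varphi}$ from its smooth kernel, the bound $\Vert \mathcal{C}u\Vert_{\varphi}\leq \Vert (I+\mathcal{A}_{\varphi})u\Vert_{\varphi}$ via the Kerzman--Stein identity and the fact that $P_{\varphi}$ is a contraction, extension by density, and surjectivity from $\mathcal{C}h=h$ on $A^{\infty}(\partial\Omega)$. The three consequences are handled exactly as the paper intends them to be.
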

\begin{proof}
It follows from Proposition \ref{kzkernel} that $\mathcal{A}_{\varphi}$ maps $L^2_{\varphi}(\partial\Omega)$ into $C^{\infty}(\partial\Omega)$ and satisfies an $L^2_{\varphi}$ estimate, namely $\lVert \mathcal{A}_{\varphi}u\rVert_{\varphi}\leq c\lVert u\rVert_{\varphi}$. Thus, by Proposition \ref{kzfor} 
\[
\Vert \mathcal{C}u\rVert_{\varphi}\leq (1+c)\lVert u\rVert_{\varphi}
\]
for $u\in C^{\infty}(\partial\Omega)$. Since $C^{\infty}(\partial\Omega)$ is dense in $L^2_{\varphi}(\partial\Omega)$ with respect to its norm, and since $\mathcal{C}$ maps $C^{\infty}(\partial\Omega)$ into $A^{\infty}(\partial\Omega)$, the Cauchy transform extends to be a bounded operator $\mathcal{C}_{\varphi}$ from $L^2_{\varphi}(\partial\Omega)$ into $H^2(\partial\Omega)$. It is known that $\mathcal{C}h = h$ for all $h\in A^{\infty}(\partial\Omega)$. Therefore, $\mathcal{C}_{\varphi}$ is a bounded operator from $L^2_{\varphi}(\partial\Omega)$ onto $H^2_{\varphi}(\partial\Omega)$.
\end{proof}

\begin{rem} 
For $u\in L^2(\partial\Omega)$, both $\mathcal{C}u$ and $P_{\varphi}u$ are in $H^2_{\varphi}(\partial\Omega)$. But $\mathcal{C}u$ need not be the orthogonal projection of $u$ onto $H^{2}_{\varphi}(\partial\Omega)$. The weighted Kerzman-Stein formula gives the relationship between these two operators.
\end{rem}

\section{The weighted Szeg\H{o} kernel}\label{Szego_kernel}

For $a\in\Omega$, the evaluation functional $h\mapsto h(a)$ on $H^2_{\varphi}(\Omega)$ is continuous since
\begin{equation}\label{szego}
\vert h(a)\vert = \vert \langle h, C_a \rangle \vert = \vert \langle h, C_a \varphi^{-1}\rangle_{\varphi}\vert \leq \Vert C_a \varphi^{-1} \Vert_{\varphi} \Vert h\Vert_{\varphi}
\end{equation}
and hence, there exists a unique function $S_{\varphi}(\cdot,a)\in H^2(\partial\Omega)$ such that 
\[
h(a)=\langle h, S_{\varphi}(\cdot, a)\rangle_{\varphi}
\]
for all $h\in H^2(\partial\Omega)$. The function $S_{\varphi}(\cdot,\cdot)$ is the weighted Szeg\H{o} kernel of $\Omega$ with respect to the weight $\varphi$. 

\medskip

It can be seen that the weighted Szeg\H{o} kernel is hermitian symmetric, that is, for $z, w\in\Omega$, we have
$
S_{\varphi}(z,w)= \overline{S_{\varphi}(w,z)}.
$
Therefore, $S_{\varphi}(\cdot,\cdot)\in C^{\infty}(\Omega\times\Omega)$. 

\medskip

The weighted Szeg\H{o} kernel is the kernel of the weighted Szeg\H{o} projection because for all $u\in L^2_{\varphi}(\partial\Omega)$ and $a\in\Omega$
\begin{equation}
(P_{\varphi}u)(a) = \langle P_{\varphi}u, S_{\varphi}(\cdot, a)\rangle_{\varphi}=\langle u, S_{\varphi}(\cdot, a)\rangle_{\varphi} = \int_{\zeta\in\partial\Omega}S_{\varphi}(a,\zeta)u(\zeta)\varphi(\zeta) ds.
\end{equation}
Note that for all $h\in H^2(\partial\Omega)$
\[
h(a) = (\mathcal{C}h)(a) = \langle h, C_a\rangle = \langle h, \varphi^{-1}C_a\rangle_{\varphi} =  \langle h, P_{\varphi}(\varphi^{-1}C_a)\rangle_{\varphi}.
\]
Therefore, the weighted Szeg\H{o} kernel $S_{\varphi}(\cdot,a)$ is also given by the weighted Szeg\H{o} projection of $\varphi^{-1}C_a$.
That is,
\begin{equation}
S_{\varphi}(z,a)=(P_{\varphi} (\varphi^{-1}C_a))(z)=\int_{\zeta\in\partial\Omega}S_{\varphi}(z,\zeta)C_a(\zeta) ds.
\end{equation}
for every $z\in\Omega$.
Since the Cauchy transform maps $C^{\infty}(\partial\Omega)$ into itself, it follows from the weighted Kerzman--Stein formula that the weighted Szeg\H{o} projection $P_{\varphi}$ also maps $C^{\infty}(\partial\Omega)$ into itself. Thus, $S_{\varphi}(\cdot,a)\in A^{\infty}(\Omega)$.

\medskip

A function $u$ in $L^2(\partial\Omega)$ is orthogonal to $H^2_{\varphi}(\partial\Omega)$ if and only if $u\varphi$ is orthogonal to $H^2(\partial\Omega)$. All the functions $v\in L^2(\partial\Omega)$ orthogonal to $H^2(\partial\Omega)$ are of the form $\overline{HT}$ where $H\in H^2(\partial\Omega)$. If $v\in C^{\infty}(\partial\Omega)$ then $H\in A^{\infty}(\Omega)$. So, the orthogonal decomposition of $\varphi^{-1}C_a$ is given by
\begin{equation}
\varphi^{-1}C_a = S_{\varphi}(\cdot,a) + \varphi^{-1} \overline{H_aT}
\end{equation}
where $H_a\in A^{\infty}(\Omega)$. Also, the above decomposition shows that $H_a$ is holomorphic in $a\in\Omega$ for fixed $z\in\partial\Omega$. The weighted Garabedian kernel of $\Omega$ with respect to $\varphi$ is defined by
\begin{equation}
L_{\varphi}(z,a) = \frac{1}{2\pi}\frac{1}{z-a} - i H_a(z).
\end{equation}
For a fixed $a\in\Omega$, $L_{\varphi}(z,a)$ is a holomorphic function of $z$ on $\Omega\setminus\{a\}$ with a simple pole at $z=a$ with residue $\frac{1}{2\pi}$, and extends $C^{\infty}$ smoothly to $\partial\Omega$. Further, $L_{\varphi}(z,a)$ is holomorphic in $a$ on $\Omega$ for fixed $z\in \partial\Omega$. Moreover, it is known that (see \cite{Nehari})
\begin{equation}\label{garabedian}
L_{\varphi}(z,a) = - L_{1/\varphi}(a,z) \quad  z,a\in\Omega.
\end{equation}
Therefore, for a fixed $z\in\Omega$, $L_{\varphi}(z,a)$ is a holomorphic function of $a$ on $\Omega\setminus\{z\}$ with a simple pole at $a=z$ and residue $\frac{1}{2\pi}$, and extends $C^{\infty}$ smoothly to $\partial\Omega$. Finally, for $z \in \partial \Omega$, $a \in \Omega$
\[
S_{\varphi}(a,z) = \overline{S_{\varphi}(a,z)} 
=
\frac{1}{\varphi(z)}\left(\frac{1}{2\pi i}\frac{T(z)}{z-a}-H_a(z) T(z)\right)
=
\frac{1}{i\varphi(z)}\left(\frac{1}{2\pi}\frac{1}{z-a}-iH_a(z)\right)T(z)
\]
shows that the weighted Szeg\H{o} kernel and the weighted Garabedian kernel satisfy the identity
\begin{equation}
S_{\varphi}(a,z) = \frac{1}{i\varphi(z)} L_{\varphi}(z,a) T(z).
\end{equation}

\medskip

Let $z(t)$ denote the parametrization of the boundary $\partial\Omega$ where $t$ ranges over the parameter interval $J$. For a non-negative integer $s$, define the norm $\Vert u\Vert_s$ of a function $u$ defined on the boundary of $\Omega$ as
\[
\Vert u\Vert_s = \sup\left\lbrace\left\lvert \frac{d^m}{dt^m}u(z(t))\right\rvert : t\in J, \, \, 0\leq m\leq s\right\rbrace.
\]
Let $C^s(\partial\Omega) = \{ u : \Vert u\Vert_s < \infty\}$. This space does not depend upon the parametrization but the norm does.

\medskip

Theorem $9.2$ in \cite{Be0} shows that for a given a non-negative integer $s$, there is a positive integer $n=n(s)$ and a constant $K=K(s)$ such that
    \[
    \Vert \mathcal{C}u\Vert_s\leq K\Vert u\Vert_n
    \quad\quad
    \text{and}
    \quad\quad
    \Vert Pu\Vert_s \leq K \Vert u\Vert_n
    \]
for all $u\in C^{\infty}(\partial\Omega)$. Consequently, since $C^{\infty}(\partial\Omega)$ is dense in $C^n(\partial\Omega)$, the same inequalities hold for all $u\in C^n(\partial\Omega)$. In particular, it follows that $Pu$ and $\mathcal{C}u$ are in $C^s(\partial\Omega)$ whenever $u\in C^n(\partial\Omega)$. The weighted analogs of these estimates are essential in understanding the boundary smoothness of $S_{\varphi}(z, a)$.

\begin{thm}[\textbf{Estimates}]
Let $s$ be a non-negative integer. Then there exists a positive integer $n=n(s)$ and a constant $C=C(s,\varphi)$ such that
\begin{equation}
\Vert P_{\varphi}u\Vert_s \leq C\Vert u\Vert_n
\end{equation}
for all $u\in C^{\infty}(\partial\Omega)$. Consequently, since $C^{\infty}(\partial\Omega)$ is dense in $C^n(\partial\Omega)$, the same inequality holds for all $u\in C^n(\partial\Omega)$. In particular, it follows that $P_{\varphi}u$ is in $C^s(\partial\Omega)$ whenever $u\in C^n(\partial\Omega)$.
\end{thm}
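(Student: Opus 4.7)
The plan is to exploit the weighted Kerzman--Stein formula $P_{\varphi}(I+\mathcal{A}_{\varphi})=\mathcal{C}$ from Proposition~\ref{kzfor} to reduce the estimate for $P_\varphi$ to the unweighted estimate for $\mathcal{C}$ given by Theorem~9.2 of \cite{Be0}.

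The key preliminary observation is that $\mathcal{A}_\varphi=\mathcal{C}-\mathcal{C}^{*}_{\varphi}$ is automatically skew-adjoint on $L^2_{\varphi}(\partial\Omega)$: since $\mathcal{C}^{*}_{\varphi}$ is by construction the $L^2_\varphi$-adjoint of $\mathcal{C}$, one has $(\mathcal{A}_\varphi)^{*}=\mathcal{C}^{*}_{\varphi}-\mathcal{C}=-\mathcal{A}_\varphi$. The spectrum of a bounded skew-adjoint operator lies in $i\mathbb{R}$, so the spectrum of $I+\mathcal{A}_\varphi$ lies in $1+i\mathbb{R}$ and in particular avoids $0$; hence $I+\mathcal{A}_\varphi$ is boundedly invertible on $L^2_{\varphi}(\partial\Omega)$, and the Kerzman--Stein identity rearranges as
\[
P_\varphi u \;=\; \mathcal{C}u \;-\; \mathcal{C}\,\mathcal{A}_\varphi(I+\mathcal{A}_\varphi)^{-1}u.
\]

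Armed with this representation, the estimate splits into two pieces. The first, $\Vert\mathcal{C}u\Vert_s\leq K\Vert u\Vert_{n(s)}$, is Theorem~9.2 of \cite{Be0} verbatim. For the second, set $w=(I+\mathcal{A}_\varphi)^{-1}u$, so that $\Vert w\Vert_{L^2_\varphi}\leq M\Vert u\Vert_{L^2_\varphi}$ with $M$ depending only on $\varphi$. Since $A_\varphi\in C^{\infty}(\partial\Omega\times\partial\Omega)$ by Proposition~\ref{kzkernel}, differentiating under the integral and applying Cauchy--Schwarz gives $\Vert\mathcal{A}_\varphi w\Vert_m\leq D_m\Vert w\Vert_{L^2_\varphi}$ for every integer $m\geq 0$, so $\mathcal{A}_\varphi$ is smoothing of infinite order. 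A final application of Theorem~9.2 of \cite{Be0} to the smooth function $\mathcal{A}_\varphi w$ therefore yields $\Vert \mathcal{C}\mathcal{A}_\varphi w\Vert_s\leq KD_{n(s)}M\Vert u\Vert_{L^2_\varphi}$, and since $\Vert u\Vert_{L^2_\varphi}$ is bounded by a $\varphi$-dependent multiple of $\Vert u\Vert_0$, adding the two pieces produces the desired inequality.

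The main technical point is the invertibility of $I+\mathcal{A}_\varphi$ on $L^2_\varphi(\partial\Omega)$; once the skew-adjointness is noted this is a one-line spectral consideration, but it is essential because without it the naive iteration of the Kerzman--Stein formula produces a remainder term $P_\varphi\mathcal{A}_\varphi^N u$ that cannot be controlled in $\Vert\cdot\Vert_s$ purely from the $L^2_\varphi$-boundedness of $P_\varphi$. Everything else is bookkeeping, with the $C^{\infty}$-smoothness of $A_\varphi$ from Proposition~\ref{kzkernel} supplying the smoothing needed to absorb the finite derivative loss that Theorem~9.2 of \cite{Be0} incurs for $\mathcal{C}$.
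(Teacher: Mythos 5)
Your proof is correct, and it reaches the estimate by a genuinely different route from the paper. Both arguments rest on the same two inputs --- the unweighted estimate $\Vert\mathcal{C}u\Vert_s\leq K\Vert u\Vert_{n(s)}$ and the fact that $\mathcal{A}_{\varphi}$, having a $C^{\infty}$ kernel, is smoothing of infinite order with $\Vert\mathcal{A}_{\varphi}w\Vert_m\leq D_m\Vert w\Vert_{L^2_{\varphi}}$ --- but they package the remainder differently. You invert $I+\mathcal{A}_{\varphi}$ on $L^2_{\varphi}(\partial\Omega)$ using the skew-adjointness of $\mathcal{A}_{\varphi}$ (a correct and standard Kerzman--Stein step; skew-adjointness even gives $\Vert(I+\mathcal{A}_{\varphi})w\Vert^2=\Vert w\Vert^2+\Vert\mathcal{A}_{\varphi}w\Vert^2$, hence $\Vert(I+\mathcal{A}_{\varphi})^{-1}\Vert\leq 1$) and write $P_{\varphi}=\mathcal{C}-\mathcal{C}\mathcal{A}_{\varphi}(I+\mathcal{A}_{\varphi})^{-1}$. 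The paper avoids any inversion: it uses the same skew-adjointness to take the $L^2_{\varphi}$-adjoint of the Kerzman--Stein identity, getting $(I-\mathcal{A}_{\varphi})P_{\varphi}=\mathcal{C}^{*}_{\varphi}$, subtracts to obtain $P_{\varphi}\mathcal{A}_{\varphi}=\mathcal{A}_{\varphi}(I-P_{\varphi})$, and hence $P_{\varphi}=\mathcal{C}-\mathcal{A}_{\varphi}(I-P_{\varphi})$; the remainder is then killed by the trivial bound $\Vert(I-P_{\varphi})u\Vert_{L^2_{\varphi}}\leq\Vert u\Vert_{L^2_{\varphi}}$ for an orthogonal projection, followed by the smoothing estimate for $\mathcal{A}_{\varphi}$. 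So the difficulty you flag --- that naive iteration leaves an uncontrollable term $P_{\varphi}\mathcal{A}_{\varphi}^{N}u$ --- is resolved in the paper by a single adjoint manipulation rather than by inverting the operator. Your version costs a (mild) piece of spectral theory but yields the cleaner closed-form identity $P_{\varphi}=\mathcal{C}(I+\mathcal{A}_{\varphi})^{-1}$, which independently re-proves the $L^2_{\varphi}$-boundedness of $P_{\varphi}$; the paper's version is more elementary, needing only that $I-P_{\varphi}$ is a contraction. Both produce a constant of the same shape, namely $K$ plus a quantity controlled by $\sup_{t,\zeta,m\leq s}\bigl\lvert\frac{d^m}{dt^m}A_{\varphi}(z(t),\zeta)\bigr\rvert$ and $\int_{\partial\Omega}\varphi\,ds$.
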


\begin{proof}
Let $s$ be a non-negative integer. Then there exists a positive integer $n=n(s)$ and a constant $K=K(s)$ such that
\[
\Vert \mathcal{C}u\Vert_s\leq K\Vert u\Vert_n
\]
for all $u\in L^2(\partial \Omega)$. The weighted Kerzman-Stein identity states that $P_{\varphi}(I+\mathcal{A}_{\varphi})=\mathcal{C}_{\varphi}$. Taking the $L^2_{\varphi}$ adjoint on both sides and using the fact that $\mathcal{A}_{\varphi}^{*}=(\mathcal{C}_{\varphi}-\mathcal{C}^*_{\varphi})^{*}=\mathcal{C}^*_{\varphi}-\mathcal{C}_{\varphi}=-\mathcal{A}_{\varphi}$ gives 
\[
(I-\mathcal{A}_{\varphi})P_{\varphi}=\mathcal{C}^*_{\varphi}.
\]
On subtracting the above formula from the weighted Kerzman-Stein formula, we get $P_{\varphi}\mathcal{A}_{\varphi}+\mathcal{A}_{\varphi}P_{\varphi}=\mathcal{A}_{\varphi}$ and hence $P_{\varphi}\mathcal{A}_{\varphi}=\mathcal{A}_{\varphi}(I-P_{\varphi})$. Since $P_{\varphi}=\mathcal{C}-P_{\varphi}\mathcal{A}_{\varphi}$, we obtain that
\begin{equation}\label{KS eqn}
    P_{\varphi}=\mathcal{C}-\mathcal{A}_{\varphi}(I-P_{\varphi}).
\end{equation}
We shall use this formula to give estimates for $P_{\varphi}$.
Let $z(t)$ denote the parametrization of $\partial\Omega$ where $t$ ranges over the domain of parametrization $J$. Then, for $u\in L^2_{\varphi}(\partial\Omega)$
\begin{eqnarray*}
\Vert \mathcal{A}_{\varphi}u\Vert_s
&=&
\sup\left\lbrace\left\lvert\frac{d^m}{dt^m}(\mathcal{A}_{\varphi}u)(z(t))\right\rvert : t\in J,\,0\leq m\leq s\right\rbrace\\
&=&
\sup\left\lbrace\left\lvert \frac{d^m}{dt^m}\left(\int_{\zeta\in\partial\Omega} A_{\varphi}(z(t),\zeta) u(\zeta)\varphi(\zeta)\,ds\right)\right\rvert: t\in J,\,0\leq m\leq s\right\rbrace\\
&=&
\sup\left\lbrace\left\lvert\int_{\zeta\in\partial\Omega} \left(\frac{d^m}{dt^m} A_{\varphi}(z(t),\zeta)\right) u(\zeta)\varphi(\zeta)\,ds\right\rvert:t\in J,\,0\leq m\leq s\right\rbrace\\
&\leq&
\int_{\zeta\in\partial\Omega} \sup\left\lbrace\left\lvert \frac{d^m}{dt^m} A_{\varphi}(z(t),\zeta) \right\rvert:t\in J,\,0\leq m\leq s\right\rbrace \vert u(\zeta)\vert\varphi(\zeta)\,ds\\
&\leq&
\sup\left\lbrace\left\lvert \frac{d^m}{dt^m} A_{\varphi}(z(t),\zeta) \right\rvert:t\in J,\,0\leq m\leq s,\,\zeta\in\partial\Omega\right\rbrace\int_{\zeta\in\partial\Omega}\vert u(\zeta)\vert \varphi(\zeta)\,ds\\
&\leq&
C_1\sqrt{\int_{\zeta\in\partial\Omega}\vert u(\zeta)\vert^2\varphi(\zeta)\,ds}
=
C_1\Vert u\Vert_{L^2_{\varphi}(\partial\Omega)},
\end{eqnarray*}
where
\[
C_1
=
C_1(s,\varphi)
=
\sup\left\lbrace\left\lvert \frac{d^m}{dt^m} A_{\varphi}(z(t),\zeta) \right\rvert:t\in J,\,0\leq m\leq s,\,\zeta\in\partial\Omega\right\rbrace 
\sqrt{\int_{\zeta\in\partial\Omega}\varphi(\zeta)\,ds}.
\]
Finally, (\ref{KS eqn}) shows that 
\begin{eqnarray*}
\Vert P_{\varphi}u\Vert_s
&=&
\Vert \mathcal{C}u -\mathcal{A}_{\varphi}(I-P_{\varphi})u\Vert_s
\leq 
\Vert\mathcal{C}u\Vert_s + \Vert \mathcal{A}_{\varphi}(I-P_{\varphi})u\Vert_s
\\
&\leq&
K\Vert u\Vert_n +C_1\Vert (I-P_{\varphi})u\Vert_{L^2_{\varphi}(\partial\Omega)}
\leq
K\Vert u\Vert_n +C_1\Vert u\Vert_{L^2_{\varphi}(\partial\Omega)}\\
&\leq&
K\Vert u\Vert_n +C_1 C_2\Vert u\Vert_0,
\end{eqnarray*}
where $C_2=\sqrt{\left(\int_{\zeta\in\partial\Omega}\varphi(\zeta)\right)}$. Therefore, we have proved that $\Vert P_{\varphi}u\Vert_s\leq C\Vert u\Vert_n$, where
\begin{equation}
C=C(s,\varphi)
=
K+
\sup\left\lbrace\left\lvert \frac{d^m}{dt^m} A_{\varphi}(z(t),\zeta) \right\rvert:t\in J,\,0\leq m\leq s,\,\zeta\in\partial\Omega\right\rbrace 
\int_{\zeta\in\partial\Omega}\varphi(\zeta)\,ds.
\end{equation}
The other conclusions are straightforward.
\end{proof}

\begin{thm}\label{smooth}
Let $\Omega$ be a bounded $n$-connected domain with $C^{\infty}$ smooth boundary and $\varphi$ be a positive function in $C^{\infty}(\partial\Omega)$. Then, $S_{\varphi}(z,w)\in C^{\infty}((\overline{\Omega}\times\overline{\Omega})-\Delta)$, where $\Delta=\{(z,z):z\in\partial\Omega\}$ is the diagonal boundary set.
\end{thm}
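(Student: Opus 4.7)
The plan is to establish joint $C^\infty$ smoothness of $S_\varphi$ on $(\overline\Omega\times\overline\Omega)\setminus\Delta$ by splitting into three overlapping cases according to where the arguments sit: (i) the piece $\overline\Omega\times\Omega$; (ii) the piece $\Omega\times\overline\Omega$; and (iii) the off-diagonal boundary-boundary corner, i.e.\ neighborhoods of points $(z_0,a_0)\in\partial\Omega\times\partial\Omega$ with $z_0\ne a_0$. Cases (i) and (ii) will be handled using the identity $S_\varphi(\cdot,a)=P_\varphi(\varphi^{-1}C_a)$ together with the estimates just proved and the hermitian symmetry, while case (iii), the main obstacle, is resolved by a cut-off argument exploiting the pseudolocal nature of $P_\varphi$.

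For case (i), fix $a_0\in\Omega$ and a relatively compact neighborhood $U\Subset\Omega$ of $a_0$. The kernel $(a,\zeta)\mapsto\varphi^{-1}(\zeta)C_a(\zeta)$ is jointly $C^\infty$ on $U\times\partial\Omega$ since $a\ne\zeta$ throughout. Because $P_\varphi$ acts only in the $\zeta$-variable, every mixed $a$-derivative commutes through it, so $\partial_a^\alpha\bar\partial_a^\beta S_\varphi(\cdot,a)=P_\varphi(\partial_a^\alpha\bar\partial_a^\beta(\varphi^{-1}C_a))$. The bound $\Vert P_\varphi u\Vert_s\le C\Vert u\Vert_n$ from the preceding theorem, combined with uniform $C^n(\partial\Omega)$-bounds on these $a$-derivatives for $a\in\overline{U}$, yields joint $C^\infty$ smoothness of $S_\varphi$ on $\overline\Omega\times U$, hence on $\overline\Omega\times\Omega$. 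Case (ii) then follows at once from $S_\varphi(z,a)=\overline{S_\varphi(a,z)}$ by swapping the roles of the two variables.

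For case (iii), choose disjoint open neighborhoods $V_z,V_a\subset\mathbb{C}$ of $z_0,a_0$, together with a $C^\infty$ cut-off $\chi$ on $\partial\Omega$ supported in $V_a\cap\partial\Omega$ with $\chi\equiv 1$ near $a_0$, and decompose $\varphi^{-1}C_a=u_a^{\mathrm{near}}+u_a^{\mathrm{far}}$ with $u_a^{\mathrm{near}}=\chi\varphi^{-1}C_a$. For $a$ in a neighborhood $W_a$ of $a_0$ in $\overline\Omega$, the far piece $u_a^{\mathrm{far}}$ is jointly $C^\infty$ in $(a,\zeta)$ because its support in $\zeta$ stays bounded away from $a_0$; the case (i) argument supplies joint smoothness of $P_\varphi(u_a^{\mathrm{far}})$ on $\overline\Omega\times W_a$. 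For the near piece, rewrite Proposition \ref{kzfor} as $P_\varphi=\mathcal{C}-P_\varphi\mathcal{A}_\varphi$, giving
\[
P_\varphi(u_a^{\mathrm{near}})=\mathcal{C}(u_a^{\mathrm{near}})-P_\varphi(\mathcal{A}_\varphi u_a^{\mathrm{near}}).
\]
For $z\in V_z$ the factor $1/(\zeta-z)$ in $\mathcal{C}(u_a^{\mathrm{near}})(z)=\frac{1}{2\pi i}\int_{\partial\Omega}\chi(\zeta)\varphi^{-1}(\zeta)C_a(\zeta)/(\zeta-z)\,d\zeta$ is smooth in $\zeta\in\mathrm{supp}(\chi)$; using $C_a(\zeta)=-\overline{T(\zeta)}/(2\pi i(\bar\zeta-\bar a))$, the $a$-dependence reduces to a conjugate Cauchy-type integral in $a$ of a smooth function of $\zeta$, which extends smoothly from $\Omega$ to $\overline\Omega$ by classical boundary regularity of the Cauchy transform. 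The same manipulation, now with the globally smooth Kerzman--Stein kernel $A_\varphi$ of Proposition \ref{kzkernel} in place of the Cauchy kernel, yields joint smoothness of $\mathcal{A}_\varphi u_a^{\mathrm{near}}$ in $(z,a)$; applying $P_\varphi$ once more and invoking the estimate theorem preserves this smoothness. Summing near and far contributions produces joint $C^\infty$ smoothness of $S_\varphi$ on $V_z\times W_a$. The principal difficulty lies precisely in this third case, where the singularity of $C_a(\zeta)$ at $\zeta=a$ must be reconciled with the boundary trace of $P_\varphi$ as $a\to\partial\Omega$; the resolution leans on the pseudolocal behavior of both $\mathcal{C}$ and $P_\varphi$ guaranteed by the off-diagonal smoothness of $1/(\zeta-z)$ and the global $C^\infty$-ness of $A_\varphi$.
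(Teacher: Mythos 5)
Your proposal is correct and follows essentially the same route as the paper: both rest on the rearranged Kerzman--Stein identity $P_{\varphi}=\mathcal{C}-P_{\varphi}\mathcal{A}_{\varphi}$, a boundary cut-off isolating the singular part of $\varphi^{-1}C_a$, the conjugate-symmetry trick that turns the $1/(\bar{\zeta}-\bar{a})$ singularity into a Cauchy transform in $a$ of a smooth density, and the global smoothness of $A_{\varphi}$ combined with the $\Vert P_{\varphi}u\Vert_s\leq C\Vert u\Vert_n$ estimates. The only difference is organizational (you cut off $\varphi^{-1}C_a$ before applying $P_{\varphi}$, whereas the paper cuts off only inside the $\mathcal{C}C_{\varphi,w}$ term and observes that $P_{\varphi}\mathcal{A}_{\varphi}C_{\varphi,w}$ is globally smooth on $\overline{\Omega}\times\overline{\Omega}$ with no cut-off at all), which does not change the substance of the argument.
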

\begin{proof}
Using the weighted Kerzman-Stein identity $P_{\varphi}(I+\mathcal{A}_{\varphi})=\mathcal{C}$ for functions in $C^{\infty}(\partial\Omega)$, we can write for $z,w\in\Omega$ that
\[
S_{\varphi}(z,w)
=
(P_{\varphi} C_{\varphi,w})(z)
=
(\mathcal{C} C_{\varphi,w})(z) - (P_{\varphi} \mathcal{A}_{\varphi} C_{\varphi,w})(z)
=
H_1(z,w)-H_2(z,w).
\]
We will first study $H_1(z,w)$. For $z,w\in\Omega$
\begin{eqnarray*}
H_1(z,w)
&=&
(\mathcal{C}C_{\varphi,w})(z)=\frac{1}{2\pi i}\int_{\partial\Omega} \frac{C_{\varphi,w}(\xi)}{\xi-z} d\xi=\frac{1}{2\pi i}\int_{\partial\Omega} \overline{\left(\frac{1}{2\pi i}\frac{\varphi^{-1}(\xi)T(\xi)}{\xi-w}\right)}\frac{1}{\xi-z} d\xi\\
&=&
\frac{1}{4\pi^2}\int_{\xi\in\partial\Omega}\frac{\varphi^{-1}(\xi)}{(\xi-z)(\overline{\xi}-\overline{w})} ds.
\end{eqnarray*}
So, $H_1(z,w)\in C^{\infty}(\Omega\times\Omega)$. Let $z_0, w_0\in\partial\Omega$ be such that $z_0\neq w_0$. Choose $\epsilon>0$ such that $\overline{D_{\epsilon}(z_0)}\cap \overline{D_{\epsilon}(w_0)}=\emptyset$. Choose $\chi\in C^{\infty}(\partial\Omega)$ such that $\chi\equiv 1$ on $D_{\epsilon}(z_0)\cap\partial\Omega$ and $\chi\equiv 0$ on $D_{\epsilon}(w_0)\cap\partial\Omega$. For $z,w\in\Omega$
\[
H_1(z,w)=(\mathcal{C} C_{\varphi,w})(z) 
=
(\mathcal{C}(\chi C_{\varphi,w}))(z)+(\mathcal{C}((1-\chi)C_{\varphi,w}))(z).
\]
For $w\in D_{\epsilon}(w_0)\cap\overline{\Omega}$, the function $\chi C_{\varphi,w}$ is in $C^{\infty}(\partial\Omega)$. Therefore, $(\mathcal{C}(\chi C_{\varphi,w}))(z)\in C^{\infty}(\overline{\Omega})$ as a function of $z$ for every $w\in D_{\epsilon}(w_0)\cap\overline{\Omega}$. Given a non-negative integer $s$, there exists a positive integer $n=n(s)$ and a constant $K=K(s)>0$ such that $\Vert \mathcal{C}u\Vert_{s}\leq K\Vert u\Vert_{n}$ for all $u\in C^{\infty}(\partial\Omega)$. 

\medskip

Fix $\tilde{z}\in\overline{\Omega}$ and  $\tilde{w}\in D_{\epsilon}(w_0)\cap \overline{\Omega}$.
Given $\epsilon>0$, we have $\Vert\chi C_{\varphi,w}-\chi C_{\varphi,\tilde{w}}\Vert_n<\epsilon$ and therefore $\Vert\mathcal{C}(\chi C_{\varphi,w})-\mathcal{C}(\chi C_{\varphi,\tilde{w}})\Vert_s<K\epsilon$ for all $w$ close enough to $\tilde{w}$. Hence, for $z$ and $w$ close enough to $\tilde{z}$ and $\tilde{w}$ respectively, we have
\begin{multline*}
\vert (\mathcal{C}(\chi C_{\varphi,w}))(z)-(\mathcal{C}(\chi C_{\varphi,\tilde{w}}))(\tilde{z})\vert 
\leq
\vert (\mathcal{C}(\chi C_{\varphi,w}))(z)-(\mathcal{C}(\chi C_{\varphi,\tilde{w}}))(z)\vert
+
\\
\vert (\mathcal{C}(\chi C_{\varphi,\tilde{w}}))(z)-(\mathcal{C}(\chi C_{\varphi,\tilde{w}}))(\tilde{z})\vert
< K\epsilon +\epsilon.
\end{multline*}
Here, the first term is less than $K\epsilon$ for $w$ close enough to $\tilde{w}$ (and all $z\in\overline{\Omega}$), and the second term is less than $\epsilon$ for $z$ close enough to $\tilde{z}$ (since $\mathcal{C}(\chi C_{\varphi,\tilde{w}})\in C^{\infty}(\overline{\Omega})$). Repeat the last step for the derivatives of $(\mathcal{C}(\chi C_{\varphi,w}))(z)$ with respect to $z$ up to order $s$. Since $s$ is an arbitrary non-negative integer, we have shown that $(\mathcal{C}(\chi C_{\varphi,w}))(z)$ and all its derivatives with respect to $z$ extend continuously to $\overline{\Omega}\times (D_{\epsilon}(w_0)\cap\overline{\Omega})$. Similarly, we can show that for any non-negative integer $k$, the function
\[
\frac{\partial^k}{\partial w^k}(\mathcal{C}(\chi C_{\varphi,w}))(z)=\left(\mathcal{C}\left(\chi \frac{\partial^k}{\partial w^k}C_{\varphi,w}\right)\right)(z)
\]
and all its derivative with respect to $z$ extend continuously to $\overline{\Omega}\times (D_{\epsilon}(w_0)\cap\overline{\Omega})$. Thus, we have proved that $(\mathcal{C}(\chi C_{\varphi,w}))(z)\in C^{\infty}(\overline{\Omega}\times (D_{\epsilon}(w_0)\cap\overline{\Omega}))$ as a function of $(z,w)$. 

\medskip 

Now, observe that for $z,w\in\Omega$, 
\begin{eqnarray*}
\mathcal{C}((1-\chi)C_{\varphi,w})(z)
&=&
\frac{1}{2\pi i}\int_{\zeta\in\partial\Omega}(1-\chi)(\zeta) \overline{\frac{1}{2\pi i}\frac{\varphi^{-1}(\zeta)T(\zeta)}{\zeta-w}}\frac{1}{\zeta-z}\,d\zeta
\\
&=&
\overline{\frac{1}{2\pi i}\int_{\zeta\in\partial\Omega}(1-\overline{\chi})(\zeta) \overline{\frac{1}{2\pi i}\frac{\varphi^{-1}(\zeta)}{\zeta-z}}\frac{T(\zeta)}{\zeta-w}\,(\overline{T(\zeta)})^2 d\zeta}\\
&=&
\overline{\frac{1}{2\pi i}\int_{\zeta\in\partial\Omega}(1-\overline{\chi})(\zeta) \overline{\frac{1}{2\pi i}\frac{\varphi^{-1}(\zeta)T(\zeta)}{\zeta-z}}\frac{1}{\zeta-w}\,d\zeta}
=
\overline{\mathcal{C}((1-\overline{\chi})C_{\varphi,z})(w)}.
\end{eqnarray*}
Since $1-\overline{\chi}\equiv 0$ on $D_{\epsilon}(z_0)\cap\partial\Omega$, we observe by the same arguments as above that the function $(\mathcal{C}((1-\overline{\chi})C_{\varphi,z}))(w)\in C^{\infty}((D_{\epsilon}(z_0)\cap\overline{\Omega})\times \overline{\Omega})$ as a function of $(z,w)$. 

\medskip

Hence, $H_1(z,w)\in C^{\infty}((D_{\epsilon}(z_0)\cap\overline{\Omega})\times (D_{\epsilon}(w_0)\cap\overline{\Omega}))$. Since $(z_0,w_0)\in (\partial\Omega\times\partial\Omega)-\Delta$ was arbitrary, $H_1(z,w)\in C^{\infty}((\overline{\Omega}\times\overline{\Omega})-\Delta)$.

\medskip

We will now study $H_2(z,w)=(P_{\varphi} \mathcal{A}_{\varphi} C_{\varphi,w})(z)$. For $w\in\Omega$ and $\zeta\in\partial\Omega$,
\begin{eqnarray*}
(\mathcal{A}_{\varphi}C_{\varphi,w})(\zeta)&=&\int_{\xi\in\partial\Omega} A_{\varphi}(\zeta,\xi) \, C_{\varphi,w}(\xi) \,ds
=
\frac{-1}{2\pi i}\int_{\xi\in\partial\Omega} A_{\varphi}(\zeta,\xi) \overline{\left(\frac{T(\xi)}{\xi-w}\right)} ds\\
&=&
\overline{\frac{1}{2\pi i}\int_{\xi\in\partial\Omega} \frac{\overline{A_{\varphi}(\zeta,\xi)}}{\xi-w} d\xi}
=
\overline{(\mathcal{C} \psi_{\zeta})(w)},
\end{eqnarray*}
where $\psi_{\zeta}(\xi)=\overline{A_{\varphi}(\zeta,\xi)}$. Since $\psi_{\zeta}\in C^{\infty}(\partial\Omega)$, the function $\mathcal{C}\psi_{\zeta}\in A^{\infty}(\Omega)$. Therefore, we have that $(\mathcal{A}_{\varphi}C_{\varphi,w})(\zeta)\in C^{\infty}(\overline{\Omega})$ as a function of $w$ for every $\zeta\in\partial\Omega$. Given a non-negative integer $s$, there exist a positive integer $n=n(s)$ and a constant $K=K(s)>0$ such that $\Vert \mathcal{C}u\Vert_{s}\leq K\Vert u\Vert_n$ for all $u\in C^{\infty}(\partial\Omega)$. 

\medskip

Fix $w_0\in\overline{\Omega}$ and $\zeta_0\in\partial\Omega$. Given $\epsilon>0$, we have $\Vert \psi_{\zeta}-\psi_{\zeta_0}\Vert_n<\epsilon$ for $\zeta$ close enough to $\zeta_0$. Therefore, considering as a function of $w$, we have
\[
\Vert (\mathcal{A}_{\varphi} C_{\varphi,w})(\zeta)-(\mathcal{A}_{\varphi}C_{\varphi,w})(\zeta_0)\Vert_s
=
\Vert \overline{\mathcal{C} \psi_{\zeta}}-\overline{\mathcal{C} \psi_{\zeta_0}}\Vert_s
<
K\epsilon
\]
for $\zeta$ close enough to $\zeta_0$. Thus, for $\zeta$ and $w$ close enough to $\zeta_0$ and $w_0$ respectively,
\begin{multline*}
\vert (\mathcal{A}_{\varphi} C_{\varphi,w})(\zeta)-(\mathcal{A}_{\varphi} C_{\varphi,w_0})(\zeta_0)\vert 
\leq 
\vert (\mathcal{A}_{\varphi} C_{\varphi,w})(\zeta) - (\mathcal{A}_{\varphi} C_{\varphi,w})(\zeta_0)\vert
+\\
\vert (\mathcal{A}_{\varphi} C_{\varphi,w})(\zeta_0)-(\mathcal{A}_{\varphi} C_{\varphi,w_0})(\zeta_0)\vert
<
K\epsilon+\epsilon.
\end{multline*}
Here, the first term is less than $K\epsilon$ for $\zeta$ close enough to $\zeta_0$ (and all $w\in\overline{\Omega}$), and the second term is less than $\epsilon$ for $w$ close enough to $w_0$ (since $(\mathcal{A}_{\varphi}C_{\varphi,w})(\zeta_0)\in C^{\infty}(\overline{\Omega})$ as a function of $w$). Repeat the last step for the derivatives of $(\mathcal{A}_{\varphi} C_{\varphi,w})(\zeta)$ with respect to $w$ up to order $s$. Since $s$ is an arbitrary non-negative integer, we have shown that $(\mathcal{A}_{\varphi} C_{\varphi,w})(\zeta)$ and all its derivatives with respect to $w$ are continuous on $\partial\Omega\times\overline{\Omega}$. Let $\zeta(t)$ denote the parametrization of $\partial\Omega$. For a non-negative integer $k$, we have
\[
\frac{\partial^k}{\partial t^k}(\mathcal{A}_{\varphi} C_{\varphi,w})(\zeta(t))=\int_{\xi\in\partial\Omega} \frac{\partial^k}{\partial t^k} A_{\varphi}(\zeta(t),\xi) C_{\varphi,w}(\xi)\,ds=\overline{(\mathcal{C}\psi_{\zeta}^k)(w)},
\]
where $\psi_{\zeta}^k(\xi)=\overline{\frac{\partial^k}{\partial t^k}A_{\varphi}(\zeta(t),\xi)}$. Proceeding as before, we can show that for a non-negative integer $k$, the function $\frac{\partial^k}{\partial t^k}(\mathcal{A}_{\varphi} C_{\varphi,w})(\zeta(t))$ and all its derivatives with respect to $w$ are continuous on $\partial\Omega\times\overline{\Omega}$. Thus, we have proved that $(\mathcal{A}_{\varphi}C_{\varphi,w})(\zeta)\in C^{\infty}(\partial\Omega\times\overline{\Omega})$ as a function of $(\zeta,w)$.

\medskip

For every $w\in\overline{\Omega}$, the function $(\mathcal{A}_{\varphi}C_{\varphi,w})(\zeta)\in C^{\infty}(\partial\Omega)$ and therefore $P_{\varphi}\mathcal{A}_{\varphi}C_{\varphi,w}\in A^{\infty}(\Omega)$, in particular, $P_{\varphi}\mathcal{A}_{\varphi}C_{\varphi,w}\in C^{\infty}(\overline{\Omega})$. Given a non-negative integer $s$, there exist a positive integer $n=n(s)$ and a constant $C=C(s,\varphi)>0$ such that $\Vert P_{\varphi}u\Vert_s\leq C \Vert u\Vert_n$ for all $u\in C^{\infty}(\partial\Omega)$.

\medskip

Fix $z_0,\,w_0\in\overline{\Omega}$. Let $\epsilon$ be arbitary. Since $(\mathcal{A}_{\varphi}C_{\varphi,w})(\zeta)\in C^{\infty}(\partial\Omega\times\overline{\Omega})$ as a function of $(\zeta,w)$, we have $\Vert \mathcal{A}_{\varphi}C_{\varphi,w}-\mathcal{A}_{\varphi}C_{\varphi,w_0}\Vert_n<\epsilon$ and therefore $\Vert P_{\varphi}\mathcal{A}_{\varphi}C_{\varphi,w}-P_{\varphi}\mathcal{A}_{\varphi}C_{\varphi,w_0}\Vert_s<\epsilon$ for $w$ close enough to $w_0$. Thus, for $z$ and $w$ close enough to $z_0$ and $w_0$ respectively,
\begin{multline*}
\vert (P_{\varphi}\mathcal{A}_{\varphi} C_{\varphi,w})(z)-(P_{\varphi}\mathcal{A}_{\varphi} C_{\varphi,w_0})(z_0)\vert 
\leq 
\vert (P_{\varphi}\mathcal{A}_{\varphi} C_{\varphi,w})(z) - (P_{\varphi}\mathcal{A}_{\varphi} C_{\varphi,w_0})(z)\vert
+\\
\vert (P_{\varphi}\mathcal{A}_{\varphi} C_{\varphi,w_0})(z)-(P_{\varphi}\mathcal{A}_{\varphi} C_{\varphi,w_0})(z_0)\vert
<
K\epsilon+\epsilon.
\end{multline*}
Here, the first term is less than $K\epsilon$ for $w$ close enough to $w_0$ (and all $z\in\overline{\Omega}$), and the second term is less than $\epsilon$ for $z$ close enough to $z_0$ (since $P_{\varphi}\mathcal{A}_{\varphi}C_{\varphi,w_0}\in C^{\infty}(\overline{\Omega})$). Repeat the last step for the derivatives of $(P_{\varphi}\mathcal{A}_{\varphi} C_{\varphi,w})(z)$ with respect to $z$ up to order $s$. Since $s$ is an arbitrary non-negative integer, we have shown that $(P_{\varphi}\mathcal{A}_{\varphi} C_{\varphi,w})(z)$ and all its derivatives with respect to $z$ are continuous on $\overline{\Omega}\times\overline{\Omega}$. 

\medskip

Since $(\mathcal{A}_{\varphi}C_{\varphi,w})(\zeta)\in C^{\infty}(\partial\Omega\times\overline{\Omega})$ as a function of $(\zeta,w)$, it follows from the estimates for the weighted Szeg\H{o} projection that $(P_{\varphi}\mathcal{A}_{\varphi}C_{\varphi,w})(z)$ is infinitely many times differentiable with respect to $w$ in $\overline{\Omega}$, and that
\[
\frac{\partial^k}{\partial w^k}(P_{\varphi}\mathcal{A}_{\varphi}C_{\varphi,w})(z)=\left(P_{\varphi}\left(\frac{\partial^k}{\partial w^k}\mathcal{A}_{\varphi}C_{\varphi,w}\right)\right)(z)
\]
for any non-negative integer $k$. Proceeding as before, we can show that for any non-negative integer $k$, the function $\frac{\partial^k}{\partial w^k}(P_{\varphi}\mathcal{A}_{\varphi}C_{\varphi,w})(z)$ and all its derivatives with respect to $z$ are continuous on $\overline{\Omega}\times\overline{\Omega}$.
Thus, we have proved that $H_2(z,w)=(P_{\varphi}\mathcal{A}_{\varphi}C_{\varphi,w})(z)\in C^{\infty}(\overline{\Omega}\times\overline{\Omega})$.
\end{proof}

\begin{thm}
Let $\Omega$ be a bounded $n$-connected domain with $C^{\infty}$ smooth boundary and $\varphi$ be a positive function in $C^{\infty}(\partial\Omega)$. Then, the function $l_{\varphi}(z,w)$ defined by 
\[
L_{\varphi}(z,w) = \frac{1}{2\pi(z-w)} + l_{\varphi}(z,w)
\]
is a function on $\Omega\times\Omega$ that is holomorphic in $z$ and $w$ and that extends to be in $C^{\infty}(\overline{\Omega}\times\overline{\Omega})$.
\end{thm}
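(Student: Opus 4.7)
The plan is to derive a Cauchy-type integral representation for $l_{\varphi}(z,a)$ in terms of the weighted Szeg\H{o} kernel $S_{\varphi}$, and then combine the boundary regularity of $S_{\varphi}$ from Theorem \ref{smooth} with the Cauchy-transform estimates (Theorem 9.2 of \cite{Be0}) used throughout this section. First I would observe that $l_{\varphi}(z,a) = -iH_a(z)$ with $H_a \in A^{\infty}(\Omega)$, so $l_{\varphi}(\cdot,a) \in A^{\infty}(\Omega)$ for each fixed $a \in \Omega$; the symmetry \eqref{garabedian} then forces $l_{\varphi}(z,a) = -l_{1/\varphi}(a,z)$, giving holomorphicity in $a$ as well, so $l_{\varphi}$ is jointly holomorphic on $\Omega \times \Omega$.

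The key computation is the following. For $\xi \in \partial\Omega$ and $a \in \Omega$, the boundary identity $L_{\varphi}(\xi,a) = i\varphi(\xi) S_{\varphi}(a,\xi) \overline{T(\xi)}$ gives the boundary values
\[
l_{\varphi}(\xi,a) = i\varphi(\xi) S_{\varphi}(a,\xi) \overline{T(\xi)} - \frac{1}{2\pi(\xi - a)}.
\]
Substituting this into the Cauchy integral formula for $l_{\varphi}(\cdot,a) \in A^{\infty}(\Omega)$ and using that $\int_{\partial\Omega} \frac{d\xi}{(\xi-a)(\xi-z)} = 0$ for $a,z \in \Omega$ (by a residue computation) to eliminate the contribution of the pole term, one obtains
\[
l_{\varphi}(z,a) = \frac{1}{2\pi} \int_{\partial\Omega} \frac{\varphi(\xi) S_{\varphi}(a,\xi)}{\xi - z}\, ds, \qquad z, a \in \Omega.
\]

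For any compact $K \subset \Omega$, Theorem \ref{smooth} ensures $\varphi(\xi) S_{\varphi}(a,\xi) \in C^{\infty}(K \times \partial\Omega)$, and the Cauchy-transform estimates promote the above integral formula to $l_{\varphi} \in C^{\infty}(\overline{\Omega} \times K)$, whence $l_{\varphi} \in C^{\infty}(\overline{\Omega} \times \Omega)$. Applying the same argument to the weight $1/\varphi$ and invoking $l_{\varphi}(z,a) = -l_{1/\varphi}(a,z)$ yields $l_{\varphi} \in C^{\infty}(\Omega \times \overline{\Omega})$. For $(z_0,a_0) \in \partial\Omega \times \partial\Omega$ with $z_0 \ne a_0$, a smooth cutoff $\chi$ on $\partial\Omega$ equal to $1$ near $z_0$ and $0$ near $a_0$ splits the integrand: the $\chi$-piece is a Cauchy transform of a function smooth in $(a,\xi)$ (since $\mathrm{supp}\,\chi$ stays off $a_0$, keeping $(a,\xi)$ off $\Delta$ for $a$ near $a_0$), and is therefore smooth in $(z,a)$ near $(z_0,a_0)$; the complementary piece is dealt with by the symmetric representation coming from $l_{\varphi}(z,a) = -l_{1/\varphi}(a,z)$ together with an analogous cutoff supported near $a_0$ and vanishing near $z_0$.

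The main obstacle is smoothness on the boundary diagonal $\{(z_0,z_0) : z_0 \in \partial\Omega\}$. In contrast to $S_{\varphi}$, whose smoothness fails precisely on $\Delta$ (as per Theorem \ref{smooth}), $l_{\varphi}$ must be smooth on $\Delta$ because the subtraction $\frac{1}{2\pi(z-a)}$ is calibrated precisely to cancel the leading singular behavior of $L_{\varphi}$ as both arguments approach the same boundary point. Verifying this at $(z_0,z_0)$ requires a local analysis in a boundary chart, using the Szeg\H{o}--Garabedian identity together with the smoothness of $S_{\varphi}$ off $\Delta$ to match the singular parts on both sides of $L_{\varphi}(z,a) - \frac{1}{2\pi(z-a)} = l_{\varphi}(z,a)$ and to confirm that the cancellation persists in the $C^{\infty}$ sense to all orders along the diagonal.
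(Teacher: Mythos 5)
Your integral representation
\[
l_{\varphi}(z,a)=\frac{1}{2\pi}\int_{\partial\Omega}\frac{\varphi(\xi)\,S_{\varphi}(a,\xi)}{\xi-z}\,ds
\]
is correct, and the arguments you build on it do establish smoothness of $l_{\varphi}$ on $(\overline{\Omega}\times\overline{\Omega})\setminus\Delta$. But that is exactly the regularity that Theorem \ref{smooth} already gives for $S_{\varphi}$ itself; the whole content of the present theorem is smoothness \emph{across} the boundary diagonal, and your proposal does not prove it. Your representation is structurally incapable of reaching the diagonal: as $z$ and $a$ both approach the same boundary point $z_0$, the Cauchy kernel $1/(\xi-z)$ and the factor $S_{\varphi}(a,\xi)$ become singular simultaneously near $\xi=z_0$, and Theorem \ref{smooth} gives you no control of $S_{\varphi}(a,\xi)$ there. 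Your final paragraph concedes this and replaces the proof by a description of what would need to be verified (``match the singular parts\dots confirm that the cancellation persists in the $C^{\infty}$ sense to all orders''). That is a restatement of the theorem, not an argument; no asymptotic expansion of $S_{\varphi}$ near the boundary diagonal is available in this paper to carry it out.

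The paper avoids the diagonal issue entirely by a different decomposition. Since $L_{\varphi}(\cdot,a)=i\varphi\,\overline{S_{\varphi}(\cdot,a)T}$ on $\partial\Omega$ is orthogonal to $H^2_{1/\varphi}(\partial\Omega)$, one has $P_{1/\varphi}L_{\varphi}(\cdot,a)=0$; a residue computation gives $\mathcal{C}G_a=0$ for $G_a(z)=\tfrac{1}{2\pi(z-a)}$; and the Kerzman--Stein formula for the weight $1/\varphi$ then collapses everything to
\[
l_{\varphi}(z,a)=\bigl(P_{1/\varphi}\mathcal{A}_{1/\varphi}G_a\bigr)(z).
\]
Because the Kerzman--Stein kernel $A_{1/\varphi}$ is in $C^{\infty}(\partial\Omega\times\partial\Omega)$, the function $\mathcal{A}_{1/\varphi}G_a$ is (by the same rewriting as in the $H_2$ term of Theorem \ref{smooth}) jointly smooth on $\partial\Omega\times\overline{\Omega}$ with no diagonal exclusion, and applying the estimates for $P_{1/\varphi}$ yields $l_{\varphi}\in C^{\infty}(\overline{\Omega}\times\overline{\Omega})$ everywhere, diagonal included. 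If you want to salvage your approach, you should replace the raw Cauchy representation by this Kerzman--Stein splitting; as written, the key case is missing.
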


\begin{proof}
By the properties of the weighted Garabedian kernel mentioned before, it is easy to see that $l_{\varphi}$ is holomorphic in $(z,w)\in\Omega\times\Omega$. 

\medskip

For a fixed $a\in\Omega$, the function $L_{\varphi}(\cdot,a)$ is holomorphic on $\Omega\setminus\{a\}$ with a simple pole at $z=a$ with residue $\frac{1}{2\pi}$, and extends $C^{\infty}$ smoothly to $\partial\Omega$. 
So, $l_{\varphi}(\cdot,a)\in A^{\infty}(\Omega)$. Define
\[
G_a(z) = \frac{1}{2\pi(z-a)}
\]
Recall that the functions in $L^2(\partial\Omega)$ orthogonal to $H^2_{1/\varphi}(\partial\Omega)$ are of the form $\varphi \overline{HT}$ where $H\in H^2(\partial\Omega)$. Therefore,
\begin{eqnarray*}
l_{\varphi}(\cdot,a) 
&=&
P_{1/\varphi}(l_{\varphi}(\cdot,a))
=
P_{1/\varphi}(L_{\varphi}(\cdot,a)) - P_{1/\varphi}G_a
\\
&=&
P_{1/\varphi}(i\,\varphi\, \overline{S_{\varphi}(\cdot,a)T}) - P_{1/\varphi}G_a
=
-P_{1/\varphi}G_a 
\\
&=&
P_{1/\varphi}\mathcal{A}_{1/\varphi} G_a - \mathcal{C} G_a.
\end{eqnarray*}
But for $z\in\Omega$,
\begin{eqnarray*}
(\mathcal{C}G_a)(z)
&=&
\frac{1}{2\pi i}\int_{\xi\in\partial\Omega} \frac{1}{2\pi}\frac{1}{(\xi - a)} \frac{1}{(\xi -z)} d\xi
\\
&=& \text{Residue} \left( \frac{1}{2\pi}\frac{1}{(\cdot - a)} \frac{1}{(\cdot -z)} ; \,a\right) + \text{Residue} \left( \frac{1}{2\pi}\frac{1}{(\cdot - a)} \frac{1}{(\cdot -z)} ;\, z\right)
\\
&=&
\frac{1}{2\pi(a-z)} + \frac{1}{2\pi(z-a)} = 0.
\end{eqnarray*}
Therefore,
\[
l_{\varphi}(z,a) = (P_{1/\varphi}\mathcal{A}_{1/\varphi} G_a) (z)
\]
Finally, proceeding as in the proof of Theorem \ref{smooth}, we obtain that $l_{\varphi}$ extends to be in $ C^{\infty}(\overline{\Omega}\times\overline{\Omega})$. 
\end{proof}

\section{Variation of $S_{\varphi}$ as a function of $\varphi$}

\noindent In this section, we will study the dependence of $S_{\varphi}$ on $\varphi$.

\begin{thm}\label{ram 1}
Let $\Omega\subset\mathbb{C}$ be a bounded $n$-connected domain with $C^{\infty}$ smooth boundary, and $\varphi$ be a positive real-valued $C^{\infty}$ function on $\partial\Omega$. Let $\{\varphi_k\}_{k=1}^{\infty}$ be a sequence of positive real-valued $C^{\infty}$ functions on $\partial\Omega$ such that $\varphi_k\rightarrow\varphi$ uniformly as $k\rightarrow\infty$ on $\partial\Omega$. Then
\[
\lim\limits_{k\rightarrow\infty} S_{\varphi_k}(z,w) = S_{\varphi}(z,w)
\]
locally uniformly on $\Omega\times\Omega$.
\end{thm}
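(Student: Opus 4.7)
My plan is to prove the locally uniform convergence through a normal-families argument, identifying any subsequential limit via the reproducing property combined with weak $L^2$ convergence on $\partial\Omega$. Since $\varphi$ is continuous and strictly positive on the compact set $\partial\Omega$, it is bounded below by some $c>0$. Uniform convergence $\varphi_k\to\varphi$ then forces $\varphi_k\geq c/2$ for $k$ large, so the weighted norms $\Vert\cdot\Vert_{\varphi_k}$ are uniformly comparable to the unweighted $L^2(\partial\Omega)$-norm for all such $k$.

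The first step is to obtain a uniform bound on $|S_{\varphi_k}(z,w)|$ on compact sets $K_1\times K_2\subset\Omega\times\Omega$. Taking $h=S_{\varphi_k}(\cdot,w)$ in the inequality (\ref{szego}) and squaring yields
\[
S_{\varphi_k}(w,w)=\Vert S_{\varphi_k}(\cdot,w)\Vert_{\varphi_k}^2\leq \Vert \varphi_k^{-1}C_w\Vert_{\varphi_k}^2=\int_{\partial\Omega}\varphi_k^{-1}|C_w|^2\,ds.
\]
For $w\in K_2$ the kernel $C_w(\zeta)$ is bounded uniformly on $K_2\times\partial\Omega$, and $\varphi_k^{-1}$ is uniformly bounded for $k$ large, so $\Vert S_{\varphi_k}(\cdot,w)\Vert_{L^2(\partial\Omega)}$ is bounded uniformly in $w\in K_2$ and $k$. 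The pointwise bound $|f(z)|^2\leq S(z,z)\Vert f\Vert_{L^2(\partial\Omega)}^2$ for the unweighted Szeg\H o kernel of $\Omega$ then upgrades this to a uniform bound $|S_{\varphi_k}(z,w)|\leq M(K_1,K_2)$ on $K_1\times K_2$ for all large $k$.

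Next, since $S_{\varphi_k}$ is holomorphic in $z$ and antiholomorphic in $w$, Montel's theorem extracts a subsequence $S_{\varphi_{k_j}}\to g$ locally uniformly on $\Omega\times\Omega$. To identify $g$, fix $w\in\Omega$ and extract a further subsequence along which $S_{\varphi_{k_j}}(\cdot,w)\rightharpoonup g_w$ weakly in $L^2(\partial\Omega)$; since $H^2(\partial\Omega)$ is weakly closed and pointwise evaluation on $\Omega$ is weakly continuous, $g_w\in H^2(\partial\Omega)$ and coincides with $g(\cdot,w)$ on $\Omega$. For any $h\in H^2(\partial\Omega)$ the reproducing identity reads
\[
h(w)=\int_{\partial\Omega}h\,\overline{S_{\varphi_{k_j}}(\cdot,w)}\,\varphi_{k_j}\,ds,
\]
and since $h\varphi_{k_j}\to h\varphi$ strongly in $L^2(\partial\Omega)$ while $\overline{S_{\varphi_{k_j}}(\cdot,w)}\rightharpoonup\overline{g_w}$ weakly, the right-hand side converges to $\langle h,g_w\rangle_\varphi$. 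By uniqueness of the reproducing kernel, $g_w=S_\varphi(\cdot,w)$, so every subsequential limit equals $S_\varphi$ and the full sequence converges locally uniformly.

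The principal technical point is the passage to the limit in the reproducing identity: local uniform convergence on $\Omega$ does not by itself yield $L^2(\partial\Omega)$ convergence of boundary values, so one must pair weak $L^2$ convergence of $S_{\varphi_{k_j}}$ against strong $L^2$ convergence of the multiplier $h\varphi_{k_j}$. This is exactly where the uniform convergence $\varphi_k\to\varphi$ on $\partial\Omega$ enters in an essential way; weaker hypotheses would require supplementary boundary regularity of the Szeg\H o kernels, for which the estimates of Section \ref{Szego_kernel} would have to be invoked.
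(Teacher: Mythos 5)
Your argument is correct and follows essentially the same route as the paper's own proof: both establish uniform comparability of the weights, derive a uniform $L^2(\partial\Omega)$ bound on $S_{\varphi_k}(\cdot,w)$ from the reproducing property, identify the weak $L^2$ subsequential limit as $S_\varphi(\cdot,w)$ by pairing the weakly convergent kernels against the strongly convergent multipliers $h\varphi_k$, and then upgrade to locally uniform convergence on $\Omega\times\Omega$ via Montel. The only cosmetic differences are the order of the two compactness extractions and your use of the pointwise bound $|f(z)|^2\leq S(z,z)\Vert f\Vert^2_{L^2(\partial\Omega)}$ where the paper invokes the Cauchy integral formula.
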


\begin{proof}
Since $\varphi_k\rightarrow\varphi$ uniformly as $k\rightarrow\infty$ on $\partial\Omega$ and $\varphi$ is a positive real-valued $C^{\infty}$ function on $\partial\Omega$, there exists a constant $c_{\varphi}>0$ such that for large enough $k$, 
\[
{c^{-1}_{\varphi}} \leq \varphi_{k}(\zeta)\leq c_{\varphi} 
\]
for all $\zeta\in\partial\Omega$. Let us assume that $\partial\Omega$ has been parametrized with respect to arc length. Let $a\in\Omega$. For large enough $k$, we have
\[
\Vert C_a\varphi^{-1}_k\Vert_{\varphi_k}^2=\frac{1}{4\pi^2}\int_{\zeta\in\partial\Omega} \frac{1}{\vert \zeta-a\vert^2}\frac{1}{\varphi_k(\zeta)} ds \leq
c_{\varphi} \Vert C_a\Vert^2.
\]
The same inequality holds when $\varphi_k$ is replaced by $\varphi$. Therefore, for every $h\in H^2(\partial\Omega)$ and for large enough $k$, we have by (\ref{szego})
\[
\vert h(a)\vert \leq \sqrt{c_{\varphi}} \Vert C_a\Vert\, \Vert h\Vert_{\varphi_k}\quad\text{and}\quad\vert h(a)\vert \leq \sqrt{c_{\varphi}} \Vert C_a\Vert\, \Vert h\Vert_{\varphi}.
\]
That is, for large enough $k$, the evaluation linear functionals at $a$ on $H^2_{\varphi_k}(\partial\Omega)$ and $H^2_{\varphi}(\partial\Omega)$ are bounded with a uniform bound. This implies that
\[
\Vert S_{\varphi_k}(\cdot,a)\Vert_{\varphi_k} \leq \sqrt{c_{\varphi}} \Vert C_a\Vert \quad \text{and} \quad \Vert S_{\varphi}(\cdot,a)\Vert_{\varphi} \leq \sqrt{c_{\varphi}} \Vert C_a\Vert
\]
for large enough $k$. Therefore,
\begin{equation}\label{szego1}
\Vert S_{\varphi_k}(\cdot,a)\Vert^2=\int_{\zeta\in\partial\Omega}\vert S_{\varphi_k}(\zeta,a)\vert^2 ds \leq {c_{\varphi}} \int_{\zeta\in\partial\Omega}\vert S_{\varphi_k}(\zeta,a)\vert^2 \varphi_k(\zeta) ds \leq (c_{\varphi})^2\,\Vert C_a\Vert^2.
\end{equation}
The same inequality holds when $\varphi_k$ is replaced by $\varphi$. For large enough $k$, define the bounded linear functionals $\Lambda_k$ on $L^2(\partial\Omega)$ as
\[
f\mapsto \langle f, S_{\varphi_k}(\cdot,a)\rangle.
\]
It follows from (\ref{szego1}) that the linear functionals $\Lambda_k$ are bounded with a uniform bound. By the Banach-Alaoglu theorem, $\Lambda_k$ has a weak-${*}$ convergent subsequence. By passing to this subsequence, denote the weak-${*}$ limit by $\Lambda_0$, where 
\[
\Lambda_0: f\mapsto \langle f, S_0\rangle,\quad S_0\in L^2(\partial\Omega).
\]
Now, for $h\in H^2(\partial\Omega)$, we have
\[
h(a)=\int_{\zeta\in\partial\Omega} h(\zeta) \overline{S_{\varphi_k}(\zeta,a)}\,\varphi_k(\zeta)\,ds.
\]
Note that $h\varphi_k$ converges to $h\varphi$ in $L^2(\partial\Omega)$. Therefore,
\begin{multline*}
\left\lvert\langle h\varphi_k,S_{\varphi_k}(\cdot,a)\rangle-\langle h\varphi, S_0\rangle\right\rvert \\
= \left\lvert\langle h\varphi_k,S_{\varphi_k}(\cdot,a)\rangle-\langle h\varphi, S_{\varphi_k}(\cdot,a)\rangle+
\langle h\varphi, S_{\varphi_k}(\cdot,a)\rangle-\langle h\varphi, S_0\rangle\right\rvert\\
= \left\lvert\langle h\varphi_k-h\varphi, S_{\varphi_k}(\cdot,a)\rangle + \langle h\varphi,S_{\varphi_k}(\cdot,a)-S_0\rangle\right\rvert\\
\leq \left\lvert\langle h\varphi_k-h\varphi, S_{\varphi_k}(\cdot,a)\rangle\right\rvert + \left\lvert\langle h\varphi,S_{\varphi_k}(\cdot,a)-S_0\rangle\right\rvert\\
\leq \Vert h\varphi_k-h\varphi\Vert\,\Vert S_{\varphi_k}(\cdot,a)\Vert + \left\lvert\langle h\varphi,S_{\varphi_k}(\cdot,a)-S_0\rangle\right\rvert\\
\leq c_{\varphi}\,\Vert C_a\Vert\,\Vert h\varphi_k-h\varphi\Vert + \left\lvert\langle h\varphi,S_{\varphi_k}(\cdot,a)-S_0\rangle\right\rvert \rightarrow 0 \quad \text{as}\quad k\rightarrow\infty.
\end{multline*}
Thus,
\[
h(a)=\int_{\zeta\in\partial\Omega} h(\zeta) \overline{S_0(\zeta)}\,\varphi(\zeta)\,ds.
\]
If $f\in {H^2(\partial\Omega)}^{\perp}$, then $\langle f, S_{\varphi_k}(\cdot,a)\rangle=0$ for all $k$, and therefore $\langle f, S_0\rangle=0$. Thus, $S_0\in H^2(\partial\Omega)$ and hence, $S_0=S_{\varphi}(\cdot,a)$.

\smallskip

Now, for a compact $K\subset\Omega$, the Cauchy integral formula gives
\[
\sup_{\zeta\in K}\vert S_{\varphi_k}(\zeta,a)\vert \leq  C_K \Vert S_{\varphi_k}(\cdot,a)\Vert_{L^2(\partial\Omega)}.
\]
Since the $L^2(\partial\Omega)$-norm of the functions $S_{\varphi_k}(\cdot,a)$ is uniformly bounded, Montel's theorem says that the sequence $\{S_{\varphi_k}(\cdot,a)\}_{k=1}^{\infty}$ is a normal family of holomorphic functions on $\Omega$. Hence, it has a subsequence that converges to a holomorphic function $\Tilde{S}$, uniformly on all compact subsets of $\Omega$. Work with this subsequence in what follows. For all $h\in L^2(\partial\Omega)$
\[
\langle h, S_{\varphi_k}(\cdot,a)\rangle\rightarrow\langle h, S_{\varphi}(\cdot,a)\rangle\quad\text{as}\;k\rightarrow\infty.
\]
Recall that $S(\cdot,\cdot)$ denotes the Szeg\H{o} kernel of $\Omega$. Since $S(\cdot,\zeta)\in L^2(\partial\Omega)$ for every $\zeta\in\Omega$, we therefore have
\[
S_{\varphi_k}(\zeta,a)-S_{\varphi}(\zeta,a)=\langle S_{\varphi_k}(\cdot,a)-S_{\varphi}(\cdot,a),S(\cdot,\zeta)\rangle \rightarrow 0 \quad\text{as} \, k\rightarrow\infty.
\]
Hence, we must have that $\Tilde{S}=S_{\varphi}(\cdot,a)$. Thus, $S_{\varphi_k}(\cdot,a)$ converges locally uniformly to $S_{\varphi}(\cdot,a)$ on $\Omega$.

Moreover, if $K_1$ and $K_2$ are compact subsets of $\Omega$. Then, for $w\in K_2$ and large enough $k$
\[
\sup_{z\in K_1}\vert S_{\varphi_k}(z,w)\vert \leq C_{K_1} \Vert S_{\varphi_k}(\cdot,w) \Vert_{L^2(\partial\Omega)}\leq C_{K_1} c_{\varphi} \Vert C_w\Vert_{L^2(\partial\Omega)}.
\]
Since $K_2$ is compact, $\Vert C_w\Vert$ is uniformly bounded for $w\in K_2$. Therefore, there exists an $M>0$ such that
\[
\sup_{z\in K_1}\sup_{w\in K_2}\vert S_{\varphi_k}(z,w)\vert \leq M
\]
for large enough $k$. Hence, Montel's theorem says that $\{S_{\varphi_k}\}$ has a subsequence that converges locally uniformly to a function $H$ on $\Omega\times\Omega$ which is holomorphic in the first variable and antiholomorphic in the second variable. We must have $H=S_{\varphi}$ from the above discussion. So, $S_{\varphi_k}\rightarrow S_{\varphi}$ uniformly on all the compact subsets of $\Omega\times\Omega$.
\end{proof}

\begin{thm}\label{ram 2}
Let $\Omega\subset\mathbb{C}$ be a bounded $n$-connected domain with $C^{\infty}$ smooth boundary and $\varphi$ a positive real-valued $C^{\infty}$ smooth function on $\partial\Omega$. 
Let $\{\varphi_k\}_{k=1}^{\infty}$ be a sequence of positive real-valued $C^{\infty}$ functions on $\partial\Omega$ such that $\varphi_k\rightarrow\varphi$ in $C^{\infty}$ topology on $\partial\Omega$ as $k\rightarrow\infty$. Then 
\[
\lim_{k\rightarrow\infty}S_{\varphi_k}(z,w) = S_{\varphi}(z,w)
\]
locally uniformly on $(\Omega\times\overline{\Omega})\cup(\overline{\Omega}\times\Omega)$.
\end{thm}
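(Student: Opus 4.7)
By the hermitian symmetry $S_\varphi(z,w)=\overline{S_\varphi(w,z)}$, the set $(\Omega\times\overline\Omega)\cup(\overline\Omega\times\Omega)$ is preserved under coordinate swap; any compact subset of it has positive distance from $\partial\Omega\times\partial\Omega$, hence can be split into pieces of the form $\overline\Omega\times K$ or $K\times\overline\Omega$ with $K\Subset\Omega$ (the two being equivalent by the symmetry). It therefore suffices to prove that for every compact $K\Subset\Omega$, $S_{\varphi_k}\to S_\varphi$ uniformly on $\overline\Omega\times K$. The strategy is to derive uniform $C^s$ bounds on $\{S_{\varphi_k}\}$ over $\overline\Omega\times K$ and combine them with the interior convergence from Theorem \ref{ram 1} via an equicontinuity argument.

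The central tool is the Kerzman--Stein decomposition from the proof of Theorem \ref{smooth}:
\[
S_{\varphi_k}(z,w)=(\mathcal C C_{\varphi_k,w})(z)-(P_{\varphi_k}\mathcal A_{\varphi_k} C_{\varphi_k,w})(z),\qquad w\in\Omega.
\]
The hypothesis $\varphi_k\to\varphi$ in $C^\infty(\partial\Omega)$ (with $\varphi>0$) gives $\varphi_k^{-1}\to\varphi^{-1}$ in $C^\infty(\partial\Omega)$. Together with $|\zeta-w|\ge\mathrm{dist}(K,\partial\Omega)>0$ for $\zeta\in\partial\Omega$ and $w\in K$, this yields $C_{\varphi_k,w}\to C_{\varphi,w}$ in $C^\infty(\partial\Omega)$ uniformly in $w\in K$, and $A_{\varphi_k}\to A_\varphi$ in $C^\infty(\partial\Omega\times\partial\Omega)$. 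The explicit formula for $C(s,\varphi_k)$ from the Estimates theorem, given in terms of $C^s$-norms of $A_{\varphi_k}$ and $\int\varphi_k\,ds$, shows that it is uniformly bounded in $k$. Combining these with $\|\mathcal Cu\|_s\le K(s)\|u\|_n$, the decomposition yields $\|S_{\varphi_k}(\cdot,w)\|_s\le M_s$ uniformly in $k$ and $w\in K$. Antiholomorphicity of $S_{\varphi_k}$ in $w$ together with Cauchy's integral formula applied on a slightly enlarged compact $K\Subset K'\Subset\Omega$ upgrades the sup-norm bound to uniform control of $\bar w$-derivatives, and similarly for mixed derivatives. Hence $\{S_{\varphi_k}\}$ is bounded in $C^s(\overline\Omega\times K)$ for every $s\ge 0$.

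The uniform $C^1$ bound gives equicontinuity on $\overline\Omega\times K$. Given $\varepsilon>0$, $C^\infty$-smoothness of $\partial\Omega$ allows one to find a compact $K''\Subset\Omega$ and $\delta>0$ such that every $z\in\overline\Omega$ lies within distance $\delta$ of some $z'\in K''$, with $\delta$ small enough that the Lipschitz contributions in the three-term split
\[
|S_{\varphi_k}(z,w)-S_\varphi(z,w)|\le|S_{\varphi_k}(z,w)-S_{\varphi_k}(z',w)|+|S_{\varphi_k}(z',w)-S_\varphi(z',w)|+|S_\varphi(z',w)-S_\varphi(z,w)|
\]
are each below $\varepsilon/3$. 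Theorem \ref{ram 1} gives uniform convergence on the compact $K''\times K\subset\Omega\times\Omega$, making the middle term $<\varepsilon/3$ for $k$ large. This yields $|S_{\varphi_k}(z,w)-S_\varphi(z,w)|<\varepsilon$ uniformly on $\overline\Omega\times K$. The main obstacle is the uniform control of the weighted Szeg\H{o} projections $P_{\varphi_k}$ across $k$; this is precisely where the $C^\infty$ convergence of $\varphi_k$ (strictly stronger than the uniform convergence hypothesis of Theorem \ref{ram 1}) enters in an essential way, through the continuous dependence of the constant $C(s,\varphi_k)$ on $\varphi_k$ in the $C^\infty$ topology.
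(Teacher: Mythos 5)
Your proposal is correct, but it resolves the crux of the theorem by a genuinely different mechanism than the paper. Both arguments start from the same Kerzman--Stein decomposition $S_{\varphi_k}(\cdot,w)=\mathcal C C_{\varphi_k,w}-P_{\varphi_k}\mathcal A_{\varphi_k}C_{\varphi_k,w}$, and both exploit the uniform boundedness in $k$ of the constants $C(s,\varphi_k)$ coming from $A_{\varphi_k}\to A_\varphi$ in $C^\infty(\partial\Omega\times\partial\Omega)$. The difference lies in how boundary convergence in $z$ is extracted. The paper proves convergence of each piece directly: the Cauchy-transform piece converges even on $(\overline\Omega\times\overline\Omega)\setminus\Delta$ via a cutoff-and-conjugation trick, and for the projection piece the term $(P_{\varphi_k}-P_\varphi)\mathcal A_\varphi C_{\varphi,w}$ is controlled by a reproducing-kernel identity yielding $\int_{\partial\Omega}|S_{\varphi_k}(z,\zeta)-S_\varphi(z,\zeta)|^2\varphi_k\,ds\le M_k\,S_\varphi(z,z)$ with $M_k=\max|\varphi_k-\varphi|^2/(\varphi_k\varphi)\to0$; this hard estimate is the heart of the paper's proof. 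You instead use the decomposition only to obtain $k$-uniform a priori $C^s(\partial\Omega)$ bounds on $S_{\varphi_k}(\cdot,w)$ for $w\in K\Subset\Omega$, upgrade them to a uniform Lipschitz bound on $\overline\Omega$ by the maximum principle applied to $\partial_z S_{\varphi_k}(\cdot,w)$ (the tangential derivative controls the complex derivative since $|z'(t)|$ is bounded below), and then conclude by equicontinuity plus the interior convergence of Theorem \ref{ram 1} through a three-epsilon argument --- a soft compactness argument replacing the paper's hard estimate. What you lose is the additional information that the Cauchy-transform piece converges up to the boundary in both variables off the diagonal, which is what feeds the authors' belief that $S_{\varphi_k}\to S_\varphi$ on $(\overline\Omega\times\overline\Omega)\setminus\Delta$; what you gain is a shorter and more transparent route to the stated theorem, and your reduction via hermitian symmetry to sets of the form $\overline\Omega\times K$ is clean. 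Two points to make explicit in a full write-up: the interior point $z'$ must be joined to $z$ by a path in $\overline\Omega$ of length comparable to $|z-z'|$ (true for $C^\infty$ boundary) so that the Lipschitz bound applies, and the compact $K''$ must be chosen after $\delta$, which in turn is chosen after the $k$- and $K''$-independent Lipschitz constant --- your phrasing already respects this order of quantifiers.
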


\begin{proof}
Using the weighted Kerzman-Stein formula as before, we have for $z,w\in\Omega$,
\[
S_{\varphi_k}(z,w)
=
(\mathcal{C} C_{\varphi_k,w})(z) - (P_{\varphi_k} \mathcal{A}_{\varphi_k} C_{\varphi_k,w})(z)
=
G_{k}(z,w)-H_k(z,w)
\]
and similarly
\[
S_{\varphi}(z,w)
=
(\mathcal{C} C_{\varphi,w})(z) - (P_{\varphi} \mathcal{A}_{\varphi} C_{\varphi,w})(z)
=
G(z,w)-H(z,w).
\]
We first analyze $G_k(z,w)$ and prove that the sequence $G_k$ converges to $G$ locally uniformly on $(\overline{\Omega}\times\overline{\Omega})-\Delta$ where $\Delta=\{(z,z) : z\in\partial\Omega\}$ is the diagonal of the boundary.

\medskip

Let $D_r(z) \subset \mbb C$ denote the disc of radius $r > 0$ around $z \in \mbb C$. It suffices to show convergence on sets of the form $(D_r(z_0)\cap \overline{\Omega})\times (D_r(w_0)\cap\overline{\Omega})$ where $(z_0,w_0)\in(\overline{\Omega}\times\overline{\Omega})-\Delta$ and $r>0$ is small enough that $\overline{D_r(z_0)}\cap \overline{D_r(w_0)}=\emptyset$. Also, if $z_0\in\Omega$ (or $w_0\in\Omega$) then $r$ is chosen such that $\overline{D_r(z_0)}\cap \partial\Omega=\emptyset$ (or $\overline{D_r(w_0)}\cap \partial\Omega=\emptyset$).

\medskip

Let $\chi$ be a function in $C^{\infty}(\partial\Omega)$ such that $\chi\equiv 1$ on a neighborhood of $\overline{D_r(z_0)}\cap \partial\Omega$ and $\chi\equiv 0$ on a neighborhood of $\overline{D_r(w_0)}\cap \partial\Omega$. In case $w_0\in\Omega$, take $\chi\equiv 1$. If $w_0\in\partial\Omega$ and $z_0\in\Omega$, take $\chi\equiv 0$. For $z,w\in\Omega$,
\[
G_k(z,w)-G(z,w)=(\mathcal{C}(\chi C_{\varphi_k,w} - \chi C_{\varphi,w}))\,(z) + (\mathcal{C}((1-\chi)C_{\varphi_k,w} - (1-\chi)C_{\varphi,w}))\,(z).
\]
There exists a positive integer $n$ and a constant $K>0$ such that $\Vert \mathcal{C}u\Vert_0\leq K \Vert u\Vert_{n}$ for all $u\in C^{\infty}(\partial\Omega)$.
For $\epsilon>0$, there exists a positive integer $k_1$ such that $\Vert \chi(C_{\varphi_k,w} - C_{\varphi,w})\Vert_n<\epsilon$ for all  $k\geq k_1$ and $w\in D_r(w_0)\cap \overline{\Omega}$. Thus, for all $z\in \overline{\Omega}$, $w\in D_r(w_0)\cap \overline{\Omega}$ and $k\geq k_1$, we have
\begin{eqnarray*}
\vert \mathcal{C}(\chi C_{\varphi_k,w} - \chi C_{\varphi,w})\vert(z) 
&\leq&
\sup_{z\in\partial\Omega}\vert \mathcal{C}(\chi C_{\varphi_k,w} - \chi C_{\varphi,w})\vert(z)\\
&=&
\Vert \mathcal{C}(\chi C_{\varphi_k,w} - \chi C_{\varphi,w})\Vert_0
\leq
K \Vert \chi(C_{\varphi_k,w} - C_{\varphi,w})\Vert_n<K\epsilon.
\end{eqnarray*}
Note that for $z,w\in\Omega$, we have 
\[
(\mathcal{C}((1-\chi)C_{\varphi_k,w} - (1-\chi)C_{\varphi,w}))\,(z)
=
\overline{(\mathcal{C}((1-\overline{\chi})C_{\varphi_k,z} - (1-\overline{\chi})C_{\varphi,z}))\,(w)}.
\]
There exists a positive integer $k_2$ such that $\Vert (1-\overline{\chi})C_{\varphi_k,z} - (1-\overline{\chi})C_{\varphi,z}\Vert_n<\epsilon$ for all $k\geq k_2$ and $z\in D_r(z_0)\cap\overline{\Omega}$. Thus, for all $z\in D_r(z_0)\cap\overline{\Omega}$, $w\in \overline{\Omega}$ and $k\geq k_2$, we have
\begin{eqnarray*}
\vert \mathcal{C}((1-\chi) C_{\varphi_k,w} - (1-\chi) C_{\varphi,w})\vert(z) &=&
\vert \mathcal{C}((1-\overline{\chi}) C_{\varphi_k,z} - (1-\overline{\chi}) C_{\varphi,z})\vert(w)\\
&\leq&
\sup_{w\in\partial\Omega}\vert \mathcal{C}((1-\overline{\chi}) C_{\varphi_k,z} - (1-\overline{\chi}) C_{\varphi,z})\vert(w)\\
&=&
\Vert \mathcal{C}((1-\overline{\chi}) C_{\varphi_k,z} - (1-\overline{\chi}) C_{\varphi,z})\Vert_0\\
&\leq&
K \Vert (1-\overline{\chi})C_{\varphi_k,z} - (1-\overline{\chi})C_{\varphi,z})\Vert_n<K\epsilon.
\end{eqnarray*}
Hence, for all $z\in D_r(z_0)\cap\overline{\Omega}$, $w\in D_r(w_0)\cap\overline{\Omega}$ and $k\geq \max\{k_1,k_2\}$, we have
\[
\vert G_k(z,w)-G(z,w)\vert <2K\epsilon.
\]
Thus, $G_k(z,w)\rightarrow G(z,w)$ as $k\rightarrow\infty$ uniformly on $(D_r(z_0)\cap\overline{\Omega})\times (D_r(w_0)\cap\overline{\Omega})$. So, we have shown that $G_k(z,w)\rightarrow G(z,w)$ as $k\rightarrow\infty$ locally uniformly on $(\overline{\Omega}\times\overline{\Omega})-\Delta$.

\medskip

We shall now analyze $H_k(z,w)=(P_{\varphi_k} \mathcal{A}_{\varphi_k} C_{\varphi_k,w})(z)$ and prove that the sequence $H_k$ converges to $H$ locally uniformly on $(\Omega\times\overline{\Omega})$. Let $\psi_{\zeta}^k(\xi)=\overline{A_{\varphi_k}(\zeta,\xi)}$ and $\psi_{\zeta}(\xi)=\overline{A_{\varphi}(\zeta,\xi)}$ where $\zeta,\xi\in\partial\Omega$. For $w\in\overline{\Omega}$ and $\zeta\in\partial\Omega$,
\begin{eqnarray*}
\vert(\mathcal{A}_{\varphi_k}C_{\varphi_k,w})(\zeta)-(\mathcal{A}_{\varphi}C_{\varphi,w})(\zeta)\vert 
&=&
\vert\overline{(\mathcal{C}\psi^k_{\zeta})(w)}-\overline{(\mathcal{C}\psi_{\zeta})(w)}\vert
=
\vert\mathcal{C}(\psi^k_{\zeta}-\psi_{\zeta})\vert(w)
\leq
\Vert\mathcal{C}(\psi^k_{\zeta}-\psi_{\zeta})\Vert_0\\
&\leq&
K\Vert \psi^k_{\zeta}-\psi_{\zeta}\Vert_n.
\end{eqnarray*}
It can be easily checked that $\Vert \psi^k_{\zeta}-\psi_{\zeta}\Vert_n\rightarrow 0$ as $k\rightarrow\infty$. Thus, $(\mathcal{A}_{\varphi_k}C_{\varphi_k,w})(\zeta)\rightarrow (\mathcal{A}_{\varphi}C_{\varphi,w})(\zeta)$ uniformly on $\partial\Omega\times \overline{\Omega}$ as $k\rightarrow\infty$ when considered as a function of $(\zeta,w)$. Similarly, for a non-negative integer $m$, it can be shown that
\[
\frac{\partial^m}{\partial t^m} (\mathcal{A}_{\varphi_k}C_{\varphi_k,w})(\zeta(t))
\rightarrow
\frac{\partial^m}{\partial t^m}(\mathcal{A}_{\varphi}C_{\varphi,w})(\zeta(t))
\]
uniformly on $\partial\Omega\times \overline{\Omega}$ as $k\rightarrow\infty$ when considered as a function of $(\zeta,w)$. Now, for $z,w\in\overline{\Omega}$
\begin{multline*}
\vert (P_{\varphi_k}\mathcal{A}_{\varphi_k} C_{\varphi_k,w})(z)-(P_{\varphi}\mathcal{A}_{\varphi} C_{\varphi,w})(z)\vert
\\
=
\vert (P_{\varphi_k}\mathcal{A}_{\varphi_k} C_{\varphi_k,w})(z)-(P_{\varphi_k}\mathcal{A}_{\varphi} C_{\varphi,w})(z)\vert 
+
\vert (P_{\varphi_k}\mathcal{A}_{\varphi} C_{\varphi,w})(z)-(P_{\varphi}\mathcal{A}_{\varphi} C_{\varphi,w})(z)\vert\\
=
\vert (P_{\varphi_k}(\mathcal{A}_{\varphi_k} C_{\varphi_k,w}-\mathcal{A}_{\varphi} C_{\varphi,w}))(z)\vert 
+
\vert ((P_{\varphi_k}-P_{\varphi})\mathcal{A}_{\varphi} C_{\varphi,w})(z)\vert.
\end{multline*}
Observe that
\begin{eqnarray*}
\vert (P_{\varphi_k}(\mathcal{A}_{\varphi_k} C_{\varphi_k,w}-\mathcal{A}_{\varphi} C_{\varphi,w}))(z)\vert 
&\leq& 
\Vert P_{\varphi_k}(\mathcal{A}_{\varphi_k} C_{\varphi_k,w}-\mathcal{A}_{\varphi} C_{\varphi,w}) \Vert_0\\
&\leq& 
(K+C_k) \Vert \mathcal{A}_{\varphi_k} C_{\varphi_k,w}-\mathcal{A}_{\varphi} C_{\varphi,w}\Vert_n,
\end{eqnarray*}
where
\[
C_k=\sup\left\lbrace\left\lvert \frac{d^m}{dt^m} A_{\varphi_k}(z(t),\zeta) \right\rvert:t\in J,\,0\leq m\leq 1,\,\zeta\in\partial\Omega\right\rbrace 
\int_{\zeta\in\partial\Omega}\varphi_k(\zeta)\,ds.
\]
Since $\varphi_k\rightarrow\varphi$ uniformly on $\partial\Omega$ and $A_{\varphi_k}\rightarrow A_{\varphi}$ uniformly on $\partial\Omega\times\partial\Omega$, the constants $C_k$ are bounded. We have seen above that $\Vert \mathcal{A}_{\varphi_k} C_{\varphi_k,w}-\mathcal{A}_{\varphi} C_{\varphi,w}\Vert_n\rightarrow 0$ as $k\rightarrow\infty$ uniformly for all $w\in\overline{\Omega}$. Therefore,
\[
(P_{\varphi_k}(\mathcal{A}_{\varphi_k} C_{\varphi_k,w}-\mathcal{A}_{\varphi} C_{\varphi,w}))(z)\rightarrow 0
\quad \text{as}\,k\rightarrow\infty
\]
uniformly on $\overline{\Omega}\times\overline{\Omega}$. 

\medskip

Let $\mathcal{K}$ be a compact subset of $\Omega$. For $z\in\mathcal{K}$ and $w\in\overline{\Omega}$
\begin{multline*}
\vert ((P_{\varphi_k}-P_{\varphi})\mathcal{A}_{\varphi} C_{\varphi,w})(z)\vert\\
=
\left\lvert
\int_{\zeta\in\partial\Omega} S_{\varphi_k}(z,\zeta)\,\mathcal{A}_{\varphi} C_{\varphi,w}(\zeta)\,\varphi_k(\zeta) \,ds
-
\int_{\zeta\in\partial\Omega} S_{\varphi}(z,\zeta)\,\mathcal{A}_{\varphi} C_{\varphi,w}(\zeta)\varphi(\zeta) \,ds 
\right\rvert
\\
\leq
\left\lvert
\int_{\partial\Omega} (S_{\varphi_k}(z,\zeta)-S_{\varphi}(z,\zeta))\mathcal{A}_{\varphi} C_{\varphi,w}(\zeta) \varphi_k(\zeta)\,ds
\right\rvert 
+
\left\lvert
\int_{\partial\Omega} S_{\varphi}(z,\zeta)\mathcal{A}_{\varphi} C_{\varphi,w}(\zeta) (\varphi_k-\varphi)(\zeta)\,ds
\right\rvert.
\end{multline*}
Since $S_{\varphi}(z,\zeta)\in C^{\infty}(\mathcal{K}\times \partial\Omega)$, $\mathcal{A}_{\varphi} C_{\varphi,w}(\zeta)\in C^{\infty}(\partial\Omega\times \overline{\Omega})$ as a function of $(\zeta,w)$ and $\varphi_k\rightarrow\varphi$ uniformly on $\partial\Omega$, 
\[
\left\lvert
\int_{\zeta\in\partial\Omega} S_{\varphi}(z,\zeta)\mathcal{A}_{\varphi} C_{\varphi,w}(\zeta) (\varphi_k-\varphi)(\zeta)\,ds
\right\rvert
\rightarrow 0
\quad \text{as}\,k\rightarrow\infty
\]
uniformly on $\mathcal{K}\times\overline{\Omega}$. Let 
\[
R_k(z,\zeta)
=
S_{\varphi_k}(z,\zeta)-S_{\varphi}(z,\zeta), 
\quad
z\in\mathcal{K},\, \zeta\in\partial\Omega.
\]
Observe that
\begin{eqnarray*}
\left\lvert
\int_{\partial\Omega} R_k(z,\zeta)\mathcal{A}_{\varphi} C_{\varphi,w}(\zeta) \varphi_k(\zeta)\,ds
\right\rvert
&\leq&
\sqrt{\int_{\partial\Omega} \vert R_k(z,\zeta)\vert^2 \varphi_k(\zeta)\,ds}
\sqrt{\int_{\partial\Omega} \vert \mathcal{A}_{\varphi} C_{\varphi,w}(\zeta)\vert^2 \varphi_k(\zeta)\,ds}.
\end{eqnarray*}
Since $\varphi_k\rightarrow\varphi$ uniformly on $\partial\Omega$ and $\mathcal{A}_{\varphi} C_{\varphi,w}(\zeta)\in C^{\infty}(\partial\Omega\times\overline{\Omega})$ as a function of $(\zeta,w)$, the function
\[
\int_{\partial\Omega} \vert \mathcal{A}_{\varphi} C_{\varphi,w}(\zeta)\vert^2 \varphi_k(\zeta)\,ds
\]
is uniformly bounded for $w\in\overline{\Omega}$. Now,
\begin{eqnarray*}
\int_{\zeta\in\partial\Omega} \vert R_k(z,\zeta)\vert^2 \varphi_k(\zeta)\,ds
&=&
\int_{\partial\Omega} (S_{\varphi_k}(z,\zeta)-S_{\varphi}(z,\zeta)) R_k(\zeta,z) \varphi_k(\zeta)\,ds\\
&=&
\int_{\partial\Omega} S_{\varphi_k}(z,\zeta) R_k(\zeta,z) \varphi_k(\zeta)\,ds
-
\int_{\partial\Omega} S_{\varphi}(z,\zeta) R_k(\zeta,z) \varphi_k(\zeta)\,ds\\
&=&
R_k(z,z)
-
\int_{\partial\Omega} S_{\varphi}(z,\zeta) R_k(\zeta,z) (\varphi_k-\varphi+\varphi)(\zeta)\,ds\\
&=&
\int_{\partial\Omega} S_{\varphi}(z,\zeta) R_k(\zeta,z) (\varphi_k-\varphi)(\zeta)\,ds\\
&\leq&
\sqrt{\int_{\partial\Omega} \vert S_{\varphi}(z,\zeta)\vert^2 \vert\varphi_k-\varphi\vert(\zeta)\,ds}
\sqrt{\int_{\partial\Omega} \vert R_k(\zeta,z)\vert^2 \vert\varphi_k-\varphi\vert(\zeta)\,ds}\\
&\leq&
\sqrt{M_k}
\sqrt{\int_{\partial\Omega} \vert S_{\varphi}(z,\zeta)\vert^2 \varphi(\zeta)\,ds}
\sqrt{\int_{\partial\Omega} \vert R_k(\zeta,z)\vert^2 \varphi_k(\zeta)\,ds}\\
&=&
\sqrt{M_k} \sqrt{S_{\varphi}(z,z)}
\sqrt{\int_{\partial\Omega} \vert R_k(\zeta,z)\vert^2 \varphi_k(\zeta)\,ds},
\end{eqnarray*}
where
\[
M_k=\max\left\lbrace \frac{\vert \varphi_k-\varphi\vert^2(\zeta)}{\varphi_k(\zeta)\,\varphi(\zeta)}: \zeta\in\partial\Omega\right\rbrace.
\]
Therefore,
\[
\int_{\zeta\in\partial\Omega} \vert R_k(z,\zeta)\vert^2 \varphi_k(\zeta)\,ds
\leq 
M_k\,
S_{\varphi}(z,z),\quad z\in\mathcal{K}.
\]
Since $\varphi_k$ converges to $\varphi$ uniformly on $\partial\Omega$, it follows that $M_k\rightarrow 0$ as $k\rightarrow\infty$. Since $S_{\varphi}(z,z)$ is bounded for $z\in \mathcal{K}$, 
\[
\int_{\zeta\in\partial\Omega} \vert R_k(z,\zeta)\vert^2 \varphi_k(\zeta)\,ds
\rightarrow 0 \quad \text{as}\,k\rightarrow\infty
\]
uniformly for $z\in\mathcal{K}$. Thus,
\[
\left\lvert
\int_{\partial\Omega} R_k(z,\zeta)\mathcal{A}_{\varphi} C_{\varphi,w}(\zeta) \varphi_k(\zeta)\,ds
\right\rvert
\rightarrow 0 
\quad \text{as}\,
k\rightarrow \infty
\]
uniformly on $\mathcal{K}\times\overline{\Omega}$. Since $\mathcal{K}$ is an arbitrary compact subset of $\Omega$, we see that
\[
((P_{\varphi_k}-P_{\varphi})\mathcal{A}_{\varphi} C_{\varphi,w})(z)\rightarrow 0
\quad
\text{as}\, k\rightarrow\infty
\]
locally uniformly on $\Omega\times\overline{\Omega}$. Thus, we have proved that $H_k(z,w)\rightarrow H(z,w)$ as $k\rightarrow\infty$ locally uniformly on $\Omega\times\overline{\Omega}$. Combining the convergence results for $G_k$ and $H_k$, and noting that the Szeg\H{o} kernel is conjugate symmetric, we finally conclude that
\[
\lim_{k\rightarrow\infty} S_{\varphi_k}(z,w)=S_{\varphi}(z,w)
\]
locally uniformly on $(\Omega\times\overline{\Omega})\cup (\overline{\Omega}\times\Omega)$.
\end{proof}

\begin{cor}\label{ram2_cor}
Assume that the hypotheses of Theorem \ref{ram 2} hold. Let $f:\Omega\rightarrow\mathbb{D}$ be a proper holomorphic map having simple zeroes. Then
\[
\lim_{k\rightarrow\infty}S_{\varphi_k}(z,w) = S_{\varphi}(z,w)
\]
locally uniformly on $(\overline{\Omega}\times\overline{\Omega})- \mathcal{L}$ where $\mathcal{L}=\{(z,w)\in\partial\Omega\times\partial\Omega : f(z) \overline{f(w)}=1\}$. 
\end{cor}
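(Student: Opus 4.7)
The strategy is to write
\[
\Psi_\varphi(z,w) := \bigl(1-f(z)\overline{f(w)}\bigr)\,S_\varphi(z,w)
\]
as an explicit bilinear combination of $S_\varphi(z,a_j)\overline{S_\varphi(w,a_k)}$, where $a_1,\dots,a_n\in\Omega$ are the simple zeros of $f$, and then extract the corollary from Theorem \ref{ram 2} applied at these fixed interior points. Since $1-f(z)\overline{f(w)}$ is continuous on $\overline\Omega\times\overline\Omega$ and nonzero on compact subsets of $(\overline\Omega\times\overline\Omega)-\mathcal L$, showing $\Psi_{\varphi_k}\to\Psi_\varphi$ locally uniformly on $\overline\Omega\times\overline\Omega$ will imply the conclusion by division.

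The crucial identity uses that $|f|=1$ on $\partial\Omega$ (since $f:\Omega\to\mathbb D$ is proper). This makes the multiplication operator $M_f:H^2_\varphi\to H^2_\varphi$, $h\mapsto fh$, an isometry with closed image $fH^2_\varphi$. A direct computation gives that $fH^2_\varphi$ has reproducing kernel $f(z)\overline{f(a)}S_\varphi(z,a)$: for $g=fh\in fH^2_\varphi$,
\[
\langle g,\;f\,\overline{f(a)}S_\varphi(\cdot,a)\rangle_\varphi = f(a)\int_{\partial\Omega} |f|^2\,h\,\overline{S_\varphi(\cdot,a)}\,\varphi\,ds = f(a)h(a)=g(a).
\]
Hence the orthogonal complement $(fH^2_\varphi)^\perp\subset H^2_\varphi$ has reproducing kernel exactly $\Psi_\varphi(z,a)$. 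Moreover $g\in fH^2_\varphi$ if and only if $g(a_j)=0$ for all $j$ (for such a $g$, $g/f$ is holomorphic on $\Omega$ with $|g/f|=|g|$ on $\partial\Omega$, so lies in $H^2_\varphi$), which makes $(fH^2_\varphi)^\perp$ exactly $n$-dimensional. Each $S_\varphi(\cdot,a_j)$ lies in this complement since $\langle fh,S_\varphi(\cdot,a_j)\rangle_\varphi=\overline{f(a_j)h(a_j)}=0$, and the $n$ functions are linearly independent (test against $h\in A^\infty(\Omega)$ interpolating prescribed values at the $a_j$). They therefore form a basis, and the finite-dimensional reproducing-kernel formula yields
\[
\Psi_\varphi(z,w)=\sum_{j,k=1}^n c_{jk}(\varphi)\,S_\varphi(z,a_j)\,\overline{S_\varphi(w,a_k)},
\]
where $\bigl(c_{jk}(\varphi)\bigr)_{j,k}$ is the matrix inverse of the Hermitian positive-definite Gram matrix $\bigl(S_\varphi(a_j,a_k)\bigr)_{j,k}$. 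The identical formula, with $\varphi_k$ in place of $\varphi$, holds for $\Psi_{\varphi_k}$.

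To conclude: by Theorem \ref{ram 2}, $S_{\varphi_k}(z,a_j)\to S_\varphi(z,a_j)$ locally uniformly for $z\in\overline\Omega$ (as $a_j\in\Omega$), and in particular the Gram entries $S_{\varphi_k}(a_j,a_k)\to S_\varphi(a_j,a_k)$. Continuity of matrix inversion at a positive-definite matrix then gives $c_{jk}(\varphi_k)\to c_{jk}(\varphi)$. Substituting into the bilinear expansion yields $\Psi_{\varphi_k}\to\Psi_\varphi$ locally uniformly on $\overline\Omega\times\overline\Omega$, and dividing by $1-f(z)\overline{f(w)}$ on compact subsets of $(\overline\Omega\times\overline\Omega)-\mathcal L$ completes the proof.

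The main obstacle is the structural recognition that $\Psi_\varphi$ is precisely the reproducing kernel of $(fH^2_\varphi)^\perp$, whose natural basis $\{S_\varphi(\cdot,a_j)\}$ consists of weighted Szeg\H{o} kernels anchored at \emph{interior} base points. Without this identification, one is tempted to track $(P_{\varphi_k}-P_\varphi)$ directly through the Kerzman--Stein formula as in Theorem \ref{ram 2}, which fails near $\partial\Omega\times\partial\Omega$ because $S_\varphi(z,z)$ blows up. The bilinear formula sidesteps this difficulty by converting the boundary convergence question into the convergence of finitely many Szeg\H{o} kernel values at fixed interior points, where Theorem \ref{ram 2} supplies convergence all the way up to the boundary.
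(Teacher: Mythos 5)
Your proposal is correct and follows essentially the same route as the paper: both rest on the bilinear identity $(1-f(z)\overline{f(w)})\,S_\varphi(z,w)=\sum_{i,j}c_{ij}\,S_\varphi(z,a_i)\overline{S_\varphi(w,a_j)}$ with $[c_{ij}]=[S_\varphi(a_i,a_j)]^{-1}$ (which the paper simply cites from Bell's work, whereas you derive it from the reproducing kernel of $(fH^2_\varphi)^\perp$ --- a nice self-contained addition), and then feed the interior-anchored kernels $S_{\varphi_k}(\cdot,a_j)$ and the Gram matrices into Theorem \ref{ram 2}. The only step worth making explicit is that passing from the convergence of $\Psi_{\varphi_k}$ to the convergence of $S_{\varphi_k}$ at boundary points of $(\overline{\Omega}\times\overline{\Omega})-\mathcal{L}$ requires the identity to persist up to the boundary by continuity, which is supplied by Theorem \ref{smooth} together with the inclusion $\Delta\subset\mathcal{L}$.
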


\begin{proof}
Let $\{a_1,a_2,\ldots,a_N\}$ be the zero set of $f$. By \cite{Be1}, it is known that 
\begin{equation}
S_{\varphi_k}(z,w) = \frac{1}{1-f(z)\overline{f(w)}} \sum_{i,j=1}^N c_{ijk}\, S_{\varphi_k}(z,a_i) \overline{S_{\varphi_k}(w,a_j)} 
\end{equation}
for $z,w\in\Omega$, where the coefficients $c_{ijk}$ are determined by the condition $[c_{ijk}]=[S_{\varphi_k}(a_i,a_j)]^{-1}$, and
\[
S_{\varphi}(z,w) = \frac{1}{1-f(z)\overline{f(w)}} \sum_{i,j=1}^N c_{ij}\, S_{\varphi}(z,a_i) \overline{S_{\varphi}(w,a_j)} 
\]
where $[c_{ij}]=[S_{\varphi}(a_i,a_j)]^{-1}$. Note that $\Delta\subset\mathcal{L}$. The above relations hold for all $(z,w)\in (\overline{\Omega}\times\overline{\Omega})-\mathcal{L}$ by continuity. 
\end{proof}

\begin{rem}

First, for $p \in \Omega$, the Ahlfors map $f_p$, which is the solution to the extremal problem 
    \[
    sup\{f'(p) \, : \, f:\Omega\rightarrow\mathbb{D} \text{ is holomorphic and }f'(p)>0\},
    \]
    is a proper holomorphic map. Furthermore, $f_p$ has simple zeroes for $p$ close to the boundary $\partial\Omega$ -- see \cite{Be0}, and is a candidate for the map $f$ that is used in the corollary above.    

\medskip
    
    Second, the proof of Theorem \ref{ram 2} provides a different way to prove the local uniform convergence of the Szeg\H{o} kernels $S_{\varphi_k}$ to the Szeg\H{o} kernel $S_{\varphi}$ in $\Omega\times\Omega$, but under a much stronger condition, namely, $\varphi_k\rightarrow\varphi$ in the $C^{\infty}$ topology on $\partial\Omega$ as $k\rightarrow\infty$.

\end{rem}

\begin{thm}\label{ram 3}
Let $\Omega\subset\mathbb{C}$ be a bounded $n$-connected domain with $C^{\infty}$ smooth boundary and $\varphi$ a positive real-valued $C^{\infty}$ smooth function on $\partial\Omega$. 
Let $\{\varphi_k\}_{k=1}^{\infty}$ be a sequence of positive real-valued $C^{\infty}$ functions on $\partial\Omega$ such that $\varphi_k\rightarrow\varphi$ in the $C^{\infty}$ topology on $\partial\Omega$ as $k\rightarrow\infty$. Let $f:\Omega\rightarrow\mathbb{D}$ be a proper holomorphic map with simple zeroes. Then 
\[
\lim_{k\rightarrow\infty}l_{\varphi_k}(z,w) = l_{\varphi}(z,w)
\]
locally uniformly on $(\overline{\Omega}\times\overline{\Omega})- \mathcal{L}$ where $\mathcal{L}=\{(z,w)\in\partial\Omega\times\partial\Omega : f(z) \overline{f(w)}=1\}$. In particular, we have local uniform convergence on $(\Omega\times\overline{\Omega})\cup(\overline{\Omega}\times\Omega)$.
\end{thm}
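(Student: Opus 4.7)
The plan is to follow the two-stage strategy used in the proofs of Theorem \ref{ram 2} and Corollary \ref{ram2_cor}, but now for the Garabedian piece $l_\varphi$. First I would prove local uniform convergence on $(\overline{\Omega}\times\Omega)\cup(\Omega\times\overline{\Omega})$ by reducing to Theorem \ref{ram 2} via the boundary Szeg\H{o}--Garabedian identity and the maximum modulus principle, and then extend to the boundary--boundary part $\partial\Omega\times\partial\Omega-\mathcal{L}$ by means of a Bell-type representation for $L_\varphi$ modelled on the one for $S_\varphi$.

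For the first stage, fix $a\in\Omega$ and put $v_k:=l_{\varphi_k}(\cdot,a)-l_\varphi(\cdot,a)$. This is holomorphic on $\Omega$ and lies in $C^{\infty}(\overline{\Omega})$ by the previous section; since the $(2\pi(z-a))^{-1}$ pole cancels in the difference, its boundary values coincide with $L_{\varphi_k}(\cdot,a)-L_\varphi(\cdot,a)$. The boundary identity $L_\varphi(z,a)=i\varphi(z)\overline{T(z)}\,\overline{S_\varphi(z,a)}$ on $\partial\Omega\times\Omega$ lets me rewrite
\[
v_k(z)=i\overline{T(z)}\bigl[\varphi_k(z)\,\overline{S_{\varphi_k}(z,a)}-\varphi(z)\,\overline{S_\varphi(z,a)}\bigr],\quad z\in\partial\Omega,
\]
and this tends to zero uniformly on $\partial\Omega\times K$ for every compact $K\subset\Omega$, thanks to Theorem \ref{ram 2} and the uniform convergence $\varphi_k\to\varphi$. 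The maximum modulus principle then propagates the convergence into $\overline{\Omega}$, so $l_{\varphi_k}\to l_\varphi$ uniformly on $\overline{\Omega}\times K$. The identity $l_\varphi(z,w)=-l_{1/\varphi}(w,z)$ arising from (\ref{garabedian}), applied to the reciprocal weights $1/\varphi_k\to 1/\varphi$ (again a $C^\infty$ convergence), yields the symmetric statement on compacta of $\Omega\times\overline{\Omega}$.

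For the second stage, I would derive a representation for $L_\varphi$ analogous to Bell's. Conjugating
\[
S_\varphi(z,w)=\frac{1}{1-f(z)\overline{f(w)}}\sum_{i,j=1}^{N}c_{ij}\,S_\varphi(z,a_i)\,\overline{S_\varphi(w,a_j)},\quad [c_{ij}]=[S_\varphi(a_i,a_j)]^{-1},
\]
multiplying by $i\varphi(z)\overline{T(z)}$, and absorbing it into each factor $\overline{S_\varphi(z,a_i)}$ via the boundary identity gives
\[
L_\varphi(z,w)=\frac{1}{1-\overline{f(z)}f(w)}\sum_{i,j=1}^{N}\overline{c_{ij}}\,L_\varphi(z,a_i)\,S_\varphi(w,a_j)
\]
initially for $z\in\partial\Omega,\,w\in\Omega$, and by continuity for $(z,w)\in\partial\Omega\times\partial\Omega-\mathcal{L}$ (on $\partial\Omega\times\partial\Omega$ the prefactor equals $f(z)/(f(z)-f(w))$). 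The same formula holds for each $\varphi_k$. Theorem \ref{ram 1} yields convergence of the coefficient matrices $[c^{\varphi_k}_{ij}]\to[c^\varphi_{ij}]$, the first stage gives uniform convergence of $L_{\varphi_k}(\cdot,a_i)$ on $\partial\Omega$, and Theorem \ref{ram 2} gives uniform convergence of $S_{\varphi_k}(\cdot,a_j)$ on $\partial\Omega$. Since the prefactor remains bounded on compact subsets of $\partial\Omega\times\partial\Omega-\mathcal{L}$, I conclude that $L_{\varphi_k}\to L_\varphi$ locally uniformly there, whence $l_{\varphi_k}\to l_\varphi$ after subtracting the smooth term $(2\pi(z-w))^{-1}$ (the boundary diagonal being contained in $\mathcal{L}$). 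Combining both stages covers $(\overline{\Omega}\times\overline{\Omega})-\mathcal{L}$. The main obstacle I anticipate is the clean derivation of the $L_\varphi$ representation above; once it is in place, the rest is a direct transcription of the convergence analysis from the proof of Corollary \ref{ram2_cor}.
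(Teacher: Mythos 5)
Your first stage is sound and matches the paper's: the difference $L_{\varphi_k}(\cdot,w)-L_{\varphi}(\cdot,w)$ is holomorphic because the poles cancel, its boundary values are controlled by $\varphi_k \overline{S_{\varphi_k}}-\varphi\overline{S_{\varphi}}$ via the Szeg\H{o}--Garabedian boundary identity, and the maximum principle together with Theorem \ref{ram 2}, the uniform convergence of $\varphi_k$, and the relation (\ref{garabedian}) gives local uniform convergence of $l_{\varphi_k}$ on $(\Omega\times\overline{\Omega})\cup(\overline{\Omega}\times\Omega)$. Your representation for $L_{\varphi}$ in the second stage is also correct, and after swapping the variables, conjugating, and using that $[c_{ij}]$ is Hermitian it is exactly the paper's identity (\ref{gar1}).

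The gap is in how you finish. You establish the representation only for $z\in\partial\Omega$ (first $w\in\Omega$, then $w\in\partial\Omega$ by continuity) and conclude convergence of $L_{\varphi_k}$ on $\partial\Omega\times\partial\Omega-\mathcal{L}$; you then assert that combining the two stages covers $(\overline{\Omega}\times\overline{\Omega})-\mathcal{L}$. That is only a set-theoretic covering, not a proof of local uniform convergence: a compact subset of $(\overline{\Omega}\times\overline{\Omega})-\mathcal{L}$ containing a point of $\partial\Omega\times\partial\Omega$ also contains points with both coordinates in $\Omega$ but arbitrarily close to $\partial\Omega$, and these are controlled by neither stage --- stage one degenerates as the interior coordinate leaves every fixed compact subset of $\Omega$, and stage two says nothing off the distinguished boundary. (A global two-variable maximum principle does not rescue this, since there is no control near $\mathcal{L}$.) The paper closes precisely this hole: it rewrites the prefactor in the meromorphic form $f(w)/(f(w)-f(z))$ using $\overline{f(w)}=1/f(w)$ on $\partial\Omega$ --- a rewriting you mention parenthetically but never exploit --- notes that both sides of (\ref{gar1}) are then meromorphic with removable singularities and smooth up to the boundary, and invokes the identity principle to propagate the identity into the interior, obtaining it on $\bigl(\overline{\Omega}\times(\overline{\Omega}-\{a_1,\ldots,a_N\})\bigr)-\mathcal{L}$ away from the diagonal. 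Since the right-hand side is built from finitely many one-variable functions, each converging uniformly up to $\partial\Omega$ by stage one and Theorem \ref{ram 2}, this yields uniform convergence on full relative neighborhoods in $\overline{\Omega}\times\overline{\Omega}$ of boundary--boundary points off $\mathcal{L}$. Adding this identity-principle step repairs your argument.
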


\begin{proof}
Let $K\subset\Omega$ be compact. Note that for $w\in K$, $L_{\varphi_k}(\cdot,w)-L_{\varphi}(\cdot,w)$ is a holomorphic function on $\Omega$ as the poles cancel out, and it extends $C^{\infty}$ smoothly to $\partial\Omega$. By the maximum modulus principle,
\begin{eqnarray*}
\sup\limits_{\substack{z\in\overline{\Omega}\\ w\in K}}\vert L_{\varphi_k}(z,w)-L_{\varphi}(z,w)\vert
&=&    
\sup\limits_{\substack{z\in\partial\Omega\\ w\in K}}\vert L_{\varphi_k}(z,w)-L_{\varphi}(z,w)\vert\\
&=&
\sup\limits_{\substack{z\in\partial\Omega\\ w\in K}}\vert i\varphi_k(z)S_{\varphi_k}(w,z)\overline{T(z)}-i\varphi(z)S_{\varphi_k}(w,z)\overline{T(z)}\vert\\
&=&
\sup\limits_{\substack{z\in\partial\Omega\\ w\in K}}\vert \varphi_k(z)S_{\varphi_k}(w,z)-\varphi(z)S_{\varphi_k}(w,z)\vert\\
& \leq &
\sup_{\substack{z\in \partial\Omega\\ w\in K}}\left(\varphi_k(z)\vert S_{\varphi_k}(w,z) - S_{\varphi}(w,z) \vert + \vert S_{\varphi}(w,z) (\varphi_k(z) - \varphi(z)) \vert\right)
\end{eqnarray*}
Since $\varphi_k\rightarrow \varphi$ uniformly on $\partial\Omega$ and $S_{\varphi}\in C^{\infty}(K\times \overline{\Omega})$, there exists a constant $M>0$ and $k_0 \ge 1$ such that
\[
\sup_{\substack{z\in \overline{\Omega}\\ w\in K}}\vert L_{\varphi_k}(z,w) - L(z,w)\vert 
\leq 
M \sup_{\substack{z\in \partial\Omega\\ w\in K}}\left(\vert S_{\varphi_k}(w,z) - S_{\varphi}(w,z) \vert + \vert \varphi_k(z) - \varphi(z) \vert\right)
\]
for all $k\geq k_0$. By Theorem \ref{ram 2} and (\ref{garabedian}), it follows that
\begin{equation}\label{gar}
\lim_{k\rightarrow\infty} l_{\varphi_k}(z,w) = l_{\varphi}(z,w)
\end{equation}
locally uniformly on $(\Omega\times\overline{\Omega}) \cap (\overline{\Omega}\times \Omega)$.

\medskip

Let $\{a_1,a_2,\ldots,a_N\}$ be the zero set of $f$. Recall that for $(z,w)\in(\overline{\Omega}\times\overline{\Omega})- \mathcal{L}$,
\[
S_{\varphi_k}(z,w) = \frac{1}{1-f(z)\overline{f(w)}} \sum_{i,j=1}^N c_{ijk}\, S_{\varphi_k}(z,a_i) \overline{S_{\varphi_k}(w,a_j)} 
\]
where the coefficients $c_{ijk}$ are determined by the condition $[c_{ijk}]=[S_{\varphi_k}(a_i,a_j)]^{-1}$. For $(z,w)\in (\Omega \times \partial\Omega)$, this can be rewritten as
\[
\frac{1}{i\varphi_k(w)}L_{\varphi_k}(w,z)T(w) = \frac{1}{1-f(z)\overline{f(w)}} \sum_{i,j=1}^N c_{ijk}\, S_{\varphi_k}(z,a_i) \frac{1}{i\varphi_k(w)} L_{\varphi_k}(w,a_j) T(w).
\]
That is,
\begin{equation}
L_{\varphi_k}(w,z) = \frac{f(w)}{f(w)-f(z)} \sum_{i,j=1}^N c_{ijk}\, S_{\varphi_k}(z,a_i) L_{\varphi_k}(w,a_j), 
\end{equation}
where $[c_{ijk}]=[S_{\varphi_k}(a_i,a_j)]^{-1}$. Similarly,
\[
L_{\varphi}(w,z) = \frac{f(w)}{f(w)-f(z)} \sum_{i,j=1}^N c_{ij}\, S_{\varphi}(z,a_i) L_{\varphi}(w,a_j),
\]
where $[c_{ij}]=[S_{\varphi}(a_i,a_j)]^{-1}$. Therefore,
\begin{equation}\label{gar1}
L_{\varphi_k}(w,z) - L_{\varphi}(w,z) = \frac{f(w)}{f(w)-f(z)}
\sum_{i,j=1}^N 
c_{ijk}\, S_{\varphi_k}(z,a_i) L_{\varphi_k}(w,a_j) 
- 
c_{ij}\, S_{\varphi}(z,a_i) L_{\varphi}(w,a_j)
\end{equation}
Note that $L_{\varphi_k}(w,z) - L_{\varphi}(w,z)$ is a holomorphic function in $z$ and $w$ and extends to be in $C^{\infty}(\overline{\Omega}\times \overline{\Omega})$. For a fixed $z\in\Omega$, the function on the right side in (\ref{gar1}) is holomorphic on $\Omega - \{z,a_1,\ldots,a_N\}$ and extends $C^{\infty}$ smoothly to $\partial\Omega$. Therefore, $z,a_1,\ldots,a_N$ must be removable singularities. Since (\ref{gar1}) holds for all $w\in\partial\Omega$, the identity principle implies that it also holds for all $w\in \Omega - \{z,a_1,\ldots,a_N\}$. A final continuity argument gives that (\ref{gar1}) is true for all $(z,w)\in (\overline{\Omega}\times (\overline{\Omega}-\{a_1,\ldots,a_N\}))-\mathcal{L}$ with $z\neq w$. Hence the theorem follows using (\ref{gar}), (\ref{gar1}) and Theorem \ref{ram 2}.
\end{proof}

\section{Weights close to the constant weight $\mathbf{1}$}\label{weights_constant}

Since several aspects of $S(z, a)$ (which corresponds to $\varphi \equiv 1$) are known, it is reasonable to expect a better understanding of the map $\varphi \mapsto S_{\varphi}$ at $\varphi \equiv 1$. Theorem \ref{main} in this section validates this belief to some extent. We begin with:

\begin{lem}\label{density}
Let $\Omega$ be a bounded $n$-connected domain with $C^{\infty}$ smooth boundary, $\varphi$ a positive $C^{\infty}$ smooth function on $\partial\Omega$. Then
\[
\Sigma_{\varphi} = {\text Span}_{\mbb C}\left\lbrace S_{\varphi}(\cdot,a) : a\in\Omega \right\rbrace
\]
is dense in $A^{\infty}(\Omega)$. 
\end{lem}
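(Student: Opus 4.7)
The plan is to use Hahn--Banach duality. Viewing $A^\infty(\Omega)$ with its natural Fr\'echet topology inherited from $C^\infty(\overline{\Omega})$, it suffices to show that any continuous linear functional $\Lambda$ on $A^\infty(\Omega)$ vanishing on $\Sigma_\varphi$ is identically zero. Since the restriction map $h \mapsto h|_{\partial\Omega}$ identifies $A^\infty(\Omega)$ topologically with the subspace $A^\infty(\partial\Omega) \subset C^\infty(\partial\Omega)$, the functional $\Lambda$ can be represented as $\Lambda(h) = \langle \mu, h|_{\partial\Omega}\rangle$ for some distribution $\mu \in \mathcal{D}'(\partial\Omega)$ obtained by Hahn--Banach extension inside $C^\infty(\partial\Omega)$.

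Translating the hypothesis, $\Lambda(S_\varphi(\cdot,a)) = 0$ for all $a \in \Omega$ reads $F(a) := \langle \mu_\zeta, S_\varphi(\zeta,a)\rangle \equiv 0$ on $\Omega$. By Theorem \ref{smooth}, $S_\varphi$ is in $C^\infty(\partial\Omega \times \Omega)$ (the diagonal boundary set $\Delta$ is missed as soon as $a \in \Omega$), and it is anti-holomorphic in $a$, so $F$ is a well-defined anti-holomorphic function on $\Omega$. The prototype case is when $\mu = v\varphi\,ds$ with $v \in C^\infty(\partial\Omega)$: using $S_\varphi(\zeta,a) = \overline{S_\varphi(a,\zeta)}$ one computes
\[
F(a) = \int_{\partial\Omega} S_\varphi(\zeta,a)\, v(\zeta)\,\varphi(\zeta)\,ds = \overline{(P_\varphi \bar v)(a)},
\]
so $F \equiv 0$ forces $\bar v \perp H^2_\varphi$. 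Since every $h \in A^\infty(\Omega)$ lies in $H^2_\varphi$, one obtains
\[
\Lambda(h) = \int_{\partial\Omega} h\, v\, \varphi\,ds = \langle h, \bar v\rangle_\varphi = 0,
\]
as required.

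The main obstacle is upgrading from smooth densities to a general distribution $\mu$. The direct fix is to smooth $\mu$ by a boundary mollifier to produce $\mu_\epsilon = v_\epsilon\varphi\,ds$ with $v_\epsilon \in C^\infty(\partial\Omega)$ and $\mu_\epsilon \to \mu$ weakly; although the smoothings $F_\epsilon(a) = \langle \mu_\epsilon, S_\varphi(\cdot,a)\rangle$ need not themselves vanish, they converge to $F \equiv 0$ locally uniformly in $\Omega$, and the $C^s$-boundedness of $P_\varphi$ established earlier together with the Kerzman--Stein identity $P_\varphi = \mathcal{C} - \mathcal{A}_\varphi(I - P_\varphi)$ should allow one to propagate this convergence into $\Lambda(h) = \lim_\epsilon \langle h, \bar v_\epsilon\rangle_\varphi = 0$. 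Making this passage precise---namely, controlling the error terms uniformly in $\epsilon$ in a norm that only requires $\mu$ to be of finite order---is the hard part; an alternative, perhaps more conceptual, route is to observe that the map $\mu \mapsto F$ is continuous from $\mathcal{D}'(\partial\Omega)$ (weak-$*$) to anti-holomorphic functions on $\Omega$, identify its kernel with $(A^\infty(\partial\Omega))^\perp$ via the smooth case, and then invoke weak-$*$ closedness.
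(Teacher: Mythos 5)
Your Hahn--Banach reduction and the computation in the ``prototype'' case are both correct, but the prototype case is not where the content of the lemma lies, and the step you yourself flag as ``the hard part'' is a genuine gap, not a technicality. If the annihilating functional is $\Lambda(h)=\int_{\partial\Omega}hv\varphi\,ds$ with $v$ smooth (or merely $L^2$), then the conclusion $\bar v\perp_\varphi H^2_\varphi$ follows directly from the reproducing property, and this only shows that $\Sigma_\varphi$ is dense in the $L^2_\varphi$-closure of $A^\infty(\Omega)$, i.e.\ in $H^2_\varphi(\partial\Omega)$ --- which is essentially immediate (if $h\perp_\varphi S_\varphi(\cdot,a)$ for all $a$ then $h(a)=0$ for all $a$). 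The lemma asserts density in the Fr\'echet topology of $C^{\infty}(\overline{\Omega})$, whose dual consists of distributions of arbitrary finite order, and for these your argument does not close. Concretely: writing $\mu_\epsilon=v_\epsilon\varphi\,ds$ and $g_\epsilon=P_\varphi(\overline{v_\epsilon})$, you know $\overline{g_\epsilon}=F_\epsilon\to 0$ locally uniformly on $\Omega$ and you need $\mu_\epsilon(h)=\langle h,g_\epsilon\rangle_\varphi\to 0$; but $\langle h,g_\epsilon\rangle_\varphi$ is a boundary pairing, and interior decay of $g_\epsilon$ controls it only if $\Vert g_\epsilon\Vert_{L^2(\partial\Omega)}$ stays bounded --- which fails exactly when $\mu$ has positive order, since then $\Vert v_\epsilon\Vert_{L^2}\to\infty$ and $P_\varphi$ is only an $L^2_\varphi$-contraction. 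The estimate $\Vert P_\varphi u\Vert_s\leq C\Vert u\Vert_n$ goes the wrong way here (it loses derivatives, whereas you need to gain them). The weak-$*$ alternative has the same defect in disguise: the annihilator of $A^{\infty}(\partial\Omega)$ is indeed weak-$*$ closed, but to use it you would need every $\mu$ with $F_\mu\equiv 0$ to be a weak-$*$ limit of \emph{smooth densities that themselves satisfy} $F\equiv 0$, and, as you note, mollification does not preserve that condition.

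For comparison, the paper offers no argument of its own here: it cites Theorem 9.1 of Bell's book and asserts that the same proof goes through with minor changes in the weighted case, so there is nothing in the text for your proof to diverge from. To repair your approach you must supply the missing upgrade from the (easy) Hardy-space statement to $C^{\infty}(\overline{\Omega})$-density; one concrete direction is to avoid duality altogether and use the continuity and surjectivity of $P_\varphi:C^{\infty}(\partial\Omega)\to A^{\infty}(\partial\Omega)$ together with $S_\varphi(\cdot,a)=P_\varphi(\varphi^{-1}C_a)$, reducing the problem to approximating an arbitrary $u\in C^{\infty}(\partial\Omega)$ in $C^n(\partial\Omega)$ by elements of the span of the $\varphi^{-1}C_a$ plus the kernel of $P_\varphi$ (a Plemelj-decomposition and rational-approximation question). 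As written, the proposal proves only $L^2$-density and does not establish the lemma.
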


In the unweighted case, this is exactly Theorem $9.1$ in \cite{Be0}. With minor changes, the same proof works in the weighted case as well. The details are omitted.

\medskip

For a domain $\Omega\subset\mathbb{C}$, let $\Hat{\Omega}$ denote the double of $\Omega$ and $R(z)$ denote the antiholomorphic involution on $\Hat{\Omega}$ which fixes boundary $\partial\Omega$. Let $\Tilde{\Omega}=R(\Omega)$ denote the reflection of $\Omega$ in $\Hat{\Omega}$ across the boundary.

\medskip

It is known that (see \cite{Be0}) that if $g$ and $h$ are meromorphic functions on $\Omega$ which extend continuously to the boundary such that $g(z) = \overline{h(z)}$ for $z\in \partial\Omega$, then $g$ extends meromorphically to the double $\Hat{\Omega}$ with the extension $\hat{g}$ given by
\[
\hat{g}(z) = 
\begin{cases}
g(z) & z\in \Omega \sqcup \partial\Omega\\
h(R(z)) & z\in \Tilde{\Omega}
\end{cases}
\]
Any proper holomorphic map $f:\Omega\rightarrow\mathbb{D}$ extends smoothly to $\partial\Omega$. Since $f(z) = 1/\overline{f(z)}$ for $z\in\partial\Omega$, we see that $f$ extends meromorphically to the double $\Hat{\Omega}$. 

\medskip

\begin{thm}\label{Bell1}
Let $\Omega$ be a bounded $n$-connected domain with $C^{\infty}$ smooth boundary and $\varphi$ a positive $C^{\infty}$ smooth function on $\partial\Omega$. For any $a\in\Omega$, the zeroes of $S_{\varphi}(\cdot, a)$ on the boundary must be isolated and of finite order.
\end{thm}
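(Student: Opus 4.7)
The plan is to reduce the statement about $S_\varphi(\cdot, a)$ to the analogous statement about the weighted Garabedian kernel $L_\varphi(\cdot, a)$, and then to extend a well-chosen ratio of Garabedian kernels meromorphically across $\partial\Omega$ to the Schottky double $\Hat{\Omega}$, where isolated zeros of finite order are automatic. Since $\varphi > 0$ and $|T| = 1$ on $\partial\Omega$, conjugating the identity $S_\varphi(a,z) = \frac{1}{i\varphi(z)} L_\varphi(z,a) T(z)$ gives $S_\varphi(z,a) = \frac{i}{\varphi(z)}\overline{L_\varphi(z,a) T(z)}$ for $z \in \partial\Omega$; hence the boundary zeros of $S_\varphi(\cdot, a)$ coincide, with matching orders, with those of $L_\varphi(\cdot, a)$. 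It therefore suffices to prove that every boundary zero of $L_\varphi(\cdot, a)$ is isolated and of finite order.

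Fix such a candidate zero $z_0 \in \partial\Omega$. The first step is to produce an auxiliary point $b \in \Omega$ with $b \neq a$ and $L_\varphi(z_0, b) \neq 0$. Using the relation $L_\varphi(z_0, b) = -L_{1/\varphi}(b, z_0)$, the right-hand side carries, as a function of $b \in \Omega$, a simple pole at $b = z_0$, so $|L_\varphi(z_0, b)| \to \infty$ as $b \to z_0$ from inside $\Omega$; the required $b$ may therefore be chosen close to $z_0$ and distinct from $a$. By continuity, $L_\varphi(\cdot, b)$ is non-vanishing on a boundary neighborhood $V \subset \partial\Omega$ of $z_0$, and, shrinking $V$ if needed, the same holds for $S_\varphi(\cdot, b)$ since its boundary zeros coincide with those of $L_\varphi(\cdot, b)$ by the reduction above.

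Now form $F(z) := L_\varphi(z, a)/L_\varphi(z, b)$, meromorphic on $\Omega$ with a simple pole at $a$ and continuous on $V \cap \overline{\Omega}$. Dividing the two boundary identities
\[
L_\varphi(z,a) T(z) = i\varphi(z) \overline{S_\varphi(z,a)}, \qquad L_\varphi(z,b) T(z) = i\varphi(z) \overline{S_\varphi(z,b)}
\]
makes the non-meromorphic factors $T$ and $\varphi$ cancel, leaving
\[
F(z) = \overline{S_\varphi(z,a)/S_\varphi(z,b)} \qquad \text{on } V \cap \partial\Omega.
\]
Since $h(z) := S_\varphi(z,a)/S_\varphi(z,b)$ is meromorphic on $\Omega$ and continuous on $V \cap \overline{\Omega}$, the reflection principle recalled just before the theorem applies and yields a meromorphic extension of $F$ across $V$ to a neighborhood $W$ of $z_0$ in $\Hat{\Omega}$, given on $W \cap \Tilde{\Omega}$ by $w \mapsto \overline{h(R(w))}$.

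The extension is not identically zero on $W$, since $L_\varphi(\cdot, a)$ is a nonzero meromorphic function on $\Omega$ and hence nonzero on any open subset of $\Omega$. So its zeros on $W$ are discrete with finite order, and since $L_\varphi(\cdot, b)$ is non-vanishing on $V$ the zeros of $F$ and of $L_\varphi(\cdot, a)$ on $V$ agree with the same multiplicities. Thus $L_\varphi(\cdot, a)$, and via the initial reduction $S_\varphi(\cdot, a)$, has at most an isolated zero of finite order at $z_0$. The step I expect to be the main obstacle is arranging the ratio so that the boundary factors $T$ and $\varphi$ cancel; once one has such a clean identity on $\partial\Omega$, the reflection to the double and the extraction of finite order are essentially automatic.
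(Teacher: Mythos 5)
Your proof is correct, but it takes a genuinely different route from the paper's. The paper argues globally: it invokes the density of $\mathrm{Span}_{\mathbb{C}}\{S_{\varphi}(\cdot,a_i)\}$ in $A^{\infty}(\Omega)$ (Lemma \ref{density}) to manufacture a combination $\Sigma=\sum c_i S_{\varphi}(\cdot,a_i)$ that approximates $\mathbf{1}$ and is therefore zero-free on all of $\overline{\Omega}$, and then shows that $S_{\varphi}(\cdot,a)/\Sigma$ extends meromorphically to the entire double via the boundary identity with the Garabedian kernels. You instead localize at a candidate boundary zero $z_0$, choose the denominator to be a single Garabedian kernel $L_{\varphi}(\cdot,b)$ with $b$ near $z_0$, exploiting the blow-up of $L_{\varphi}(z_0,b)$ as $b\to z_0$ to guarantee non-vanishing near $z_0$, and then reflect the ratio $L_{\varphi}(\cdot,a)/L_{\varphi}(\cdot,b)$ across a boundary arc. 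Your argument buys independence from Lemma \ref{density} (a nontrivial import from Bell's book, stated in the paper without proof), at the price of being only local and of requiring a local form of the reflection principle, whereas the paper's version is stated globally; the local form is standard once one works in a holomorphic chart on the double in which $\partial\Omega$ is a real segment and $R$ is conjugation, but you should say this, since $\partial\Omega$ is only $C^{\infty}$ as a subset of $\mathbb{C}$. Two further small points to tighten: the phrase ``simple pole at $b=z_0$'' is loose because $z_0\in\partial\Omega$, and the clean justification for $\lvert L_{\varphi}(z_0,b)\rvert\to\infty$ is the decomposition $L_{\varphi}(z,w)=\tfrac{1}{2\pi(z-w)}+l_{\varphi}(z,w)$ with $l_{\varphi}\in C^{\infty}(\overline{\Omega}\times\overline{\Omega})$ proved in Section \ref{Szego_kernel}; and you should note that $S_{\varphi}(\cdot,b)\not\equiv 0$ (e.g.\ $S_{\varphi}(b,b)>0$) so that $h=S_{\varphi}(\cdot,a)/S_{\varphi}(\cdot,b)$ is genuinely meromorphic on $\Omega$. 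The paper's global construction also has the side benefit of feeding directly into Corollary \ref{Bell2}, which your local argument would not immediately give.
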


\begin{proof}
Consider the constant function $\mathbf{1}\in A^{\infty}(\Omega)$. By Lemma \ref{density}, there exist constants $c_i\in \mbb C$ and points $a_i\in\Omega$, $1 \le i \le k$ such that
\[
\sup_{z\in\overline{\Om}} \left\lvert 1 - \sum_{i=1}^k c_i S_{\varphi}(z,a_i)\right\rvert < \frac{1}{2}.
\]
Thus, $\Sigma = \sum_{i=1}^k c_i S_{\varphi}(\cdot,a_i)$ is non-vanishing on $\overline{\Omega}$. Let $a\in\Omega$ be arbitrary. The function 
\[
f = \Sigma^{-1} \; S_{\varphi}(\cdot,a)
\]
is smooth on $\overline{\Omega}$. Recall that for $w\in\Omega$
\[
\varphi(z)\, S_{\varphi}(z,w) = i \,\overline{L_{\varphi}(z,w) T(z)}, \quad z\in\partial\Omega.
\]
Therefore, for $z\in\partial\Omega$, we have
\begin{eqnarray*}
f(z) 
&=&
\frac{S_{\varphi}(z,a)}{\sum_{i=1}^k c_i S_{\varphi}(z,a_i)} 
=
\frac{\varphi(z) \, S_{\varphi}(z,a)}{\sum_{i=1}^k c_i\, \varphi(z)\,S_{\varphi}(z,a_i)}
\\ &=&
\frac{i \,\overline{L_{\varphi}(z,a) T(z)}}{\sum_{i=1}^k c_i\, i \,\overline{L_{\varphi}(z,a_i) T(z)}}
=
\frac{\overline{L_{\varphi}(z,a)}}{\sum_{i=1}^k c_i\, \overline{L_{\varphi}(z,a_i)}}\\
&=&
\overline{\left(\frac{L_{\varphi}(R(z),a)}{\sum_{i=1}^k \overline{ c_i}\,L_{\varphi}(R(z),a_i)}\right)}.
\end{eqnarray*}
Note that
\[
g = \overline{\left(\frac{L_{\varphi}(R(\cdot),a)}{\sum_{i=1}^k \overline{ c_i}\,L_{\varphi}(R(\cdot),a_i)}\right)}
\]
is a meromorphic function on $\Tilde{\Omega}=R(\Omega)$ that extends smoothly to the boundary. Also, $f=g$ on $\partial\Omega$. Therefore, $f$ extends meromorphically to the double $\Hat{\Om}$, and hence the zeroes of $S_{\varphi}(\cdot,a)$ on $\partial\Omega$ must be isolated and of finite order.
\end{proof}

\begin{cor}\label{Bell2}
Let $\Omega$ be a bounded $n$-connected domain with $C^{\infty}$ smooth boundary and $\varphi$ a positive real-valued $C^{\infty}$ smooth function on $\partial\Omega$. For any two points $A_1$ and $A_2$ in $\Omega$, the function $S_{\varphi}(z,A_0)/S_{\varphi}(z,A_1)$ extends meromorphically to the double of $\Omega$.
\end{cor}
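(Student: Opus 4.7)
The plan is to run exactly the doubling argument of Theorem \ref{Bell1}, but applied directly to a ratio of two weighted Szeg\H{o} kernels in place of a single kernel divided by an auxiliary non-vanishing function. Set
\[
f(z) = \frac{S_{\varphi}(z, A_1)}{S_{\varphi}(z, A_2)}.
\]
Since both numerator and denominator lie in $A^{\infty}(\Omega)$ and $S_{\varphi}(\cdot, A_2)\not\equiv 0$, $f$ is meromorphic on $\Omega$, with interior poles exactly at the (isolated, finite-order) interior zeros of $S_{\varphi}(\cdot, A_2)$.

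The key computation is at the boundary. Applying the Szeg\H{o}--Garabedian identity $\varphi(z)\,S_{\varphi}(z,w) = i\,\overline{L_{\varphi}(z,w)\,T(z)}$ to both numerator and denominator of $f$ at $z\in\partial\Omega$ makes the common factors $\varphi(z)$, $i$, and $\overline{T(z)}$ cancel, leaving
\[
f(z) \;=\; \overline{\left(\frac{L_{\varphi}(z, A_1)}{L_{\varphi}(z, A_2)}\right)} \;=\; \overline{h(z)}, \qquad z\in\partial\Omega,
\]
where $h(z) = L_{\varphi}(z, A_1)/L_{\varphi}(z, A_2)$ is meromorphic on $\Omega$ (simple pole at $A_1$, simple zero at $A_2$, plus whatever interior zeros and poles $L_{\varphi}(\cdot, A_2)$ contributes) and smooth on $\partial\Omega$ off its boundary singular set. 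One then invokes the meromorphic reflection principle recalled in the paragraph preceding Theorem \ref{Bell1}: $f$ and $h$ are both meromorphic on $\Omega$ with continuous extensions to $\partial\Omega$ (as maps to the Riemann sphere) and satisfy $f = \overline{h}$ on $\partial\Omega$, so $f$ extends meromorphically to $\hat\Omega$, its extension on $\tilde\Omega = R(\Omega)$ being $z\mapsto \overline{h(R(z))}$.

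The one subtle point, and the main potential obstacle, is the behavior at the boundary zeros of $S_{\varphi}(\cdot, A_2)$, where $f$ has genuine poles rather than being smooth. By Theorem \ref{Bell1} these points form a finite subset of $\partial\Omega$ and each has finite order, and by the very same $S$--$L$ identity used above, $\overline{h}$ has a pole of exactly the matching order there. To glue across such a point $p$, pass locally to the reciprocals $1/f$ and $1/h$, which are holomorphic in a neighborhood of $p$ on their respective sides, continuous up to $\partial\Omega$, and conjugate to one another there; this reduces the local gluing to the classical Schwarz reflection principle for holomorphic functions. Since there are only finitely many such boundary points and each is of finite order, the resulting meromorphic function on $\hat\Omega$ is globally well-defined, which completes the argument.
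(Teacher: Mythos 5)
Your proposal is correct and follows essentially the same route as the paper: the boundary identity $S_{\varphi}(z,A_0)/S_{\varphi}(z,A_1)=\overline{L_{\varphi}(z,A_0)/L_{\varphi}(z,A_1)}$ obtained from $\varphi S_{\varphi}(\cdot,a)=i\,\overline{L_{\varphi}(\cdot,a)T}$, reflection to $\tilde\Omega$, and an appeal to Theorem \ref{Bell1} to control the finitely many finite-order boundary singularities. Your extra step of passing locally to reciprocals to glue across boundary zeros of the denominator is a welcome elaboration of what the paper compresses into the phrase ``at most finitely many pole like singularities.''
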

\begin{proof}
It can be checked that for $z\in\partial\Om$,
\[
\frac{S_{\varphi}(z,A_0)}{S_{\varphi}(z,A_1)}
=
\overline{\left(\frac{L_{\varphi}(z,A_0)}{L_{\varphi}(z,A_1)}\right)} 
=
\overline{\left(\frac{L_{\varphi}(R(z),A_0)}{L_{\varphi}(R(z),A_1)}\right)}.
\]
Also, the function 
\[
\overline{\left(\frac{L_{\varphi}(R(z),A_0)}{L_{\varphi}(R(z),A_1)}\right)}
\]
is meromorphic on $\Tilde{\Om}=R(\Omega)$. Since the zeroes of functions of $z$ of the form $S_{\varphi}(z,a)$ on $\partial\Om$ are isolated and of finite order, the function $S_{\varphi}(z,A_0)/S_{\varphi}(z,A_1)$ extends to the boundary with at most finitely many pole like singularities. Thus, $S_{\varphi}(z,A_0)/S_{\varphi}(z,A_1)$ extends meromorphically to the double of $\Omega$.
\end{proof}

\begin{thm}\label{main}
Let $\Omega\subset\mathbb{C}$ be a bounded $n$-connected domain with $C^{\infty}$ smooth boundary and $\varphi_k$ be a sequence of positive $C^{\infty}$ smooth functions on $\partial\Omega$ such that $\varphi_k\rightarrow \mathbf{1}$ in $C^{\infty}$ topology on $\partial\Omega$ as $k\rightarrow\infty$. Then for every $w_0\in\partial\Omega$
\[
\lim_{k\rightarrow\infty}S_{\varphi_k}(z,w_0) = S(z,w_0)
\]
locally uniformly on $\overline{\Omega}\setminus\{w_0\}$. 
\end{thm}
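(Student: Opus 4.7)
The plan is to combine the interior convergence provided by Theorem \ref{ram 2} with the stronger boundary convergence from Corollary \ref{ram2_cor}, via a covering argument on the boundary. Fix a compact set $K \subset \overline{\Omega} \setminus \{w_0\}$. Its interior portion $K \cap \Omega$ is compact in $\Omega$, so Theorem \ref{ram 2} applied to $(K \cap \Omega) \times \{w_0\} \subset \Omega \times \overline{\Omega}$ yields uniform convergence $S_{\varphi_k}(z, w_0) \to S(z, w_0)$ on $K \cap \Omega$. The nontrivial work concerns the boundary portion $K \cap \partial\Omega$, which is a compact subset of $\partial\Omega$ disjoint from $w_0$.

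For each $z_0 \in K \cap \partial\Omega$, I would apply Corollary \ref{ram2_cor} with a proper holomorphic map $f : \Omega \to \mathbb{D}$ with simple zeros, chosen so that $(z_0, w_0) \notin \mathcal{L} = \{(z, w) \in \partial\Omega \times \partial\Omega : f(z) \overline{f(w)} = 1\}$. Since $|f| \equiv 1$ on $\partial\Omega$, the relation $f(z) \overline{f(w)} = 1$ reduces on $\partial\Omega \times \partial\Omega$ to $f(z) = f(w)$, so the condition $(z_0, w_0) \notin \mathcal{L}$ amounts to $f(z_0) \neq f(w_0)$. Taking $f = f_p$ to be the Ahlfors map for some $p \in \Omega$ close enough to $\partial\Omega$ that $f_p$ has simple zeros (see the remark after Corollary \ref{ram2_cor}), Corollary \ref{ram2_cor} then gives uniform convergence of $S_{\varphi_k}(z, w)$ on a compact neighborhood of $(z_0, w_0)$ in $\overline{\Omega} \times \overline{\Omega}$; specializing to $w = w_0$ yields uniform convergence of $S_{\varphi_k}(\cdot, w_0) \to S(\cdot, w_0)$ on some neighborhood $U_{z_0}$ of $z_0$ in $\overline{\Omega}$. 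A finite subcover of $K \cap \partial\Omega$ by such neighborhoods, combined with the interior case, then completes the proof.

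The main obstacle is the separation step: for any distinct $z_0, w_0 \in \partial\Omega$, one must produce $p \in \Omega$ near $\partial\Omega$ with $f_p(z_0) \neq f_p(w_0)$. I would argue by contradiction using the classical representation $f_p(z) = S(z, p)/L(z, p)$ of the Ahlfors map: if $f_p(z_0) = f_p(w_0)$ held for every such $p$, then $S(z_0, p) L(w_0, p) \equiv S(w_0, p) L(z_0, p)$ on $\Omega$, which rearranges to the identity $S(z_0, p)/S(w_0, p) \equiv L(z_0, p)/L(w_0, p)$ in $p$. The left-hand side is antiholomorphic in $p$ (as a ratio of $\overline{\partial}$-closed functions) while the right-hand side is holomorphic in $p$; agreement of the two forces both sides to be constant. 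Using Corollary \ref{Bell2} to pass to meromorphic extensions on the double $\hat{\Omega}$ and Theorem \ref{Bell1} to control the boundary zero structure, this rigid proportionality forces $z_0 = w_0$, the desired contradiction. With the separation lemma in place, the covering argument above finishes the proof.
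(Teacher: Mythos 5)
Your route is genuinely different from the paper's and, with two repairs, it works. The paper fixes one proper map $f$ once and for all and then confronts the exceptional set $\mathcal{L}$ directly: at a boundary point $z_0$ with $f(z_0)\overline{f(w_0)}=1$ it writes $S_{\varphi_k}(z,w_0)/(S_{\varphi_k}(z,b)S_{\varphi_k}(b,w_0))=\mathcal{F}(z)\mathcal{S}_k(z)$ on the double (via Theorem \ref{Bell1} and Corollary \ref{Bell2}), observes that the pole of $\mathcal{F}$ at $z_0$ must be cancelled by a zero of $\mathcal{S}_k$ of the same order, and upgrades the convergence $\mathcal{S}_k\to\mathcal{S}$ to convergence of the quotients $\mathcal{T}_k=\mathcal{S}_k\circ\psi^{-1}/z^r$ by a maximum-modulus/Montel argument. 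You instead move the exceptional set off the point $(z_0,w_0)$ by varying the proper map among the Ahlfors maps $f_p$, which reduces everything to Corollary \ref{ram2_cor} plus a separation lemma. This avoids the double and the pole-cancellation bookkeeping entirely, at the cost of having to prove that for distinct $z_0,w_0\in\partial\Omega$ some admissible $p$ gives $f_p(z_0)\neq f_p(w_0)$; that lemma is true and your contradiction scheme for it is sound in outline.

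Two points need attention. First, a small slip: for a compact $K\subset\overline{\Omega}\setminus\{w_0\}$ the set $K\cap\Omega$ is generally \emph{not} compact in $\Omega$. The fix is standard: the neighborhoods $U_{z_0}$ produced by Corollary \ref{ram2_cor} are relatively open in $\overline{\Omega}$, so a finite subcover of $K\cap\partial\Omega$ covers a relatively open set $V\supset K\cap\partial\Omega$, and $K\setminus V$ \emph{is} compact in $\Omega$, where Theorem \ref{ram 2} applies. Second, the end of your separation argument is under-specified, and Theorem \ref{Bell1} and Corollary \ref{Bell2} (which concern weighted kernels and boundary zero structure) are not really the right tools there. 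The clean finish is: if $f_p(z_0)=f_p(w_0)$ for all $p$ in the open set where $f_p$ has simple zeros, then $S(z_0,p)L(w_0,p)-S(w_0,p)L(z_0,p)$ is a real-analytic function of $p$ vanishing on an open set, hence identically on $\Omega$; since $p\mapsto S(z_0,p)$ is antiholomorphic and $p\mapsto L(z_0,p)$ is holomorphic and zero-free in $\Omega$, the resulting identity forces $S(z_0,p)\equiv c\,S(w_0,p)$ for a constant $c$. Conjugating and using $S(p,\zeta)=\tfrac{1}{i}L(\zeta,p)T(\zeta)$ for $\zeta\in\partial\Omega$ turns this into $L(z_0,p)\equiv c'\,L(w_0,p)$, which is impossible for $z_0\neq w_0$ because $L(z_0,p)=\tfrac{1}{2\pi(z_0-p)}+l(z_0,p)$ blows up as $p\to z_0$ while $L(w_0,p)$ stays bounded there. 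With these two repairs your proof is complete.
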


\begin{proof}
It is enough to show the convergence near points on $\partial\Omega$ other than $w_0$. Let $f:\Omega\rightarrow\mathbb{D}$ be a proper holomorphic map with simple zeroes. By Theorem \ref{ram 2} and Corollary $\ref{ram2_cor}$,
\begin{equation}\label{con1}
\lim_{k\rightarrow\infty}S_{\varphi_k}(z,w) = S(z,w)
\end{equation}
locally uniformly on $(\overline{\Omega}\times\overline{\Omega})- \mathcal{L}$ where $\mathcal{L}=\{(z,w)\in\partial\Omega\times\partial\Omega : f(z) \overline{f(w)}=1\}$. Let $b\in\Omega$ be fixed. Since $S(z,b)$ does not vanish for $z\in\partial\Omega$, there exists a $k_1 \ge 1$ such that $S_{\varphi_k}(z,b)$ does not vanish for $z\in\partial\Omega$ and $k\geq k_1$.

\medskip

Let $\{a_1,a_2,\ldots,a_N\}$ be the zero set of $f$. Then
\begin{equation}\label{main1}
\frac{S_{\varphi_k}(z,w)}{S_{\varphi_k}(z,b)S_{\varphi_k}(b,w)} 
=
\frac{1}{1-f(z)\overline{f(w)}} \sum_{i,j=1}^N c_{ijk}\, \frac{S_{\varphi_k}(z,a_i)}{S_{\varphi_k}(z,b)} \overline{\left(\frac{S_{\varphi_k}(w,a_j)}{S_{\varphi_k}(w,b)}\right)}  
\end{equation}
where the coefficients $c_{ijk}$ are determined by the condition $[c_{ijk}]=[S_{\varphi_k}(a_i,a_j)]^{-1}$, and
\begin{equation}\label{main2}
\frac{S(z,w)}{S(z,b)S(b,w)} 
=
\frac{1}{1-f(z)\overline{f(w)}} \sum_{i,j=1}^N c_{ij}\, \frac{S(z,a_i)}{S(z,b)} \overline{\left(\frac{S(w,a_j)}{S(w,b)}\right)} 
\end{equation}
where $[c_{ij}]=[S(a_i,a_j)]^{-1}$. Since $f$ extends meromorphically to the double, it follows from Corollary \ref{Bell2} that the functions in (\ref{main1}) and (\ref{main2}) extend meromorphically to the double $\Hat{\Omega}$. Write
\[
\frac{S_{\varphi_k}(z,w_0)}{S_{\varphi_k}(z,b)S_{\varphi_k}(b,w_0)} 
= \mathcal{F}(z) \mathcal{S}_k(z) 
\quad \text{and} \quad
\frac{S(z,w_0)}{S(z,b)S(b,w_0)} 
= \mathcal{F}(z) \mathcal{S}(z) 
\]
where $\mathcal{F}$ denotes the meromorphic extension of $1/(1-f(z)\overline{f(w_0)})$ to $\Hat{\Omega}$. The values of $\mathcal{S}_k$ and $\mathcal{S}$ on $\Omega \sqcup \partial\Omega$ can be read from (\ref{main1}) and (\ref{main2}). 
From the proof of Corollary \ref{Bell2},
\[
\mathcal{S}_k(z) = \sum_{i,j=1}^N c_{ijk}\, \overline{\left(\frac{L_{\varphi_k}(R(z),a_i)}{L_{\varphi_k}(R(z),b)}
\frac{S_{\varphi_k}(w_0,a_j)}{S_{\varphi_k}(w_0,b)}\right)}
\quad \text{and}\quad
\mathcal{S}(z) = \sum_{i,j=1}^N c_{ij}\, \overline{\left(\frac{L(R(z),a_i)}{L(R(z),b)}
\frac{S(w_0,a_j)}{S(w_0,b)}\right)}
\]
for $z\in\Tilde{\Omega} = R(\Omega)$.

\medskip

Let $z_0\in\partial\Omega$ be such that $z_0\neq w_0$. It can be read from the left side in (\ref{main1}) and (\ref{main2}) that $\mathcal{F}(z)\mathcal{S}_k(z)$ and $\mathcal{F}(z)\mathcal{S}(z)$ cannot have a pole at $z=z_0$ for all $k\geq k_1$. By the same reasoning, $\mathcal{S}_k(z)$ and $\mathcal{S}(z)$ cannot have pole at $z=z_0$ for all $k\geq k_1$. Let $\psi: U\rightarrow V\subset\mathbb{C}$ be a chart near $z_0$ with $\psi(z_0)=0$ and $w_0\notin U$.
Therefore, if $\mathcal{F}(z)$ has a pole at $z=z_0$ then there exists an integer $r \ge 1$ such that for $k\geq k_1$,
\[
\mathcal{F}\circ \psi^{-1}(z) = z^{-r}\mathcal{G}(z), \quad
\mathcal{S}_k\circ \psi^{-1}(z) = z^{r} \mathcal{T}_k(z) \quad \text{and} \quad 
\mathcal{S}\circ\psi^{-1}(z) = z^{r} \mathcal{T}(z)
\]
where $\mathcal{G}, \mathcal{T}_k$ and $\mathcal{T}$ are holomorphic functions in a neighborhood of $0$. Without loss of generality, assume it to hold on $V$. 

\medskip

From Theorems \ref{ram 2} and \ref{ram 3}, the functions $\mathcal{S}_k$ converge to $\mathcal{S}$ uniformly on some neighborhood of $z_0$ in $\Hat{\Omega}$. Without loss of generality, let the convergence be on $U$. So, $\mathcal{S}_k\circ \psi^{-1}\rightarrow\mathcal{S}\circ\psi^{-1}$ uniformly on $V$. Let $C\subset V$ be a circle centered at $0$ enclosing the disc $D$. By the maximum modulus principle, 
\[
\sup_{z\in D}\vert \mathcal{T}_k(z)\vert \leq \sup_{z\in C}\vert \mathcal{T}_k(z)\vert \leq M_1 \sup_{z\in C}\vert z^r\mathcal{T}_k(z)\vert 
=  M_1 \sup_{z\in C}\vert \mathcal{S}_k\circ\psi^{-1}(z)\vert <M_2<\infty,
\]
where $M_1, M_2$ are positive constants. By Montel's theorem, every subsequence of $\{\mathcal{T}_k\vert_D\}$ has a convergent subsequence. But the only possible limit point is $\mathcal{T}\vert_D$. Therefore, $\mathcal{T}_k\rightarrow\mathcal{T}$ uniformly on $D$. This implies that $\mathcal{F}\mathcal{S}_k\rightarrow\mathcal{F}\mathcal{S}$ uniformly on $\psi^{-1}(D)$. In particular,
\[
\lim_{k\rightarrow\infty}\frac{S_{\varphi_k}(z,w_0)}{S_{\varphi_k}(z,b)S_{\varphi_k}(b,w_0)} = \frac{S(z,w_0)}{S(z,b)S(b,w_0)}
\]
uniformly for $z\in \overline{\Omega}\cap \psi^{-1}(D)$. Note that the constants $S_{\varphi_k}(b,w_0)$, $S(b,w_0)$ and the functions $S_{\varphi_k}(z,b)$, $S(z,b)$ do not vanish on some neighborhood of $z_0$ in $\overline{\Omega}$. Shrinking $U$ if necessary, assume that they do not vanish for $z\in \overline{\Omega}\cap \psi^{-1}(D)$ and $k\geq k_1$. We therefore conclude using Theorem \ref{ram 2} that 
\begin{equation}\label{con3}
\lim_{k\rightarrow\infty}S_{\varphi_k}(z,w_0) = S(z,w_0)
\end{equation}
uniformly for $z\in\overline{\Omega}\cap \psi^{-1}(D)$. Hence, we have proved the theorem.
\end{proof}

\begin{rem} 
Since the Szeg\H{o} kernel is conjugate symmetric, we also have that for every fixed $z_0\in\partial\Omega$,
\[
\lim_{k\rightarrow\infty}S_{\varphi_k}(z_0,w) = S(z_0,w)
\]
locally uniformly on $\overline{\Omega}\setminus\{z_0\}$.
\end{rem}

\medskip

We believe that $S_{\varphi_k}(z, w)$ converges to $S(z, w)$ locally uniformly on $(\overline \Omega \times \overline \Omega) \setminus \Delta$, but we have not been able to show this.

\medskip

Let $\Omega\subset\mathbb{C}$ be a bounded $n$-connected domain with $C^{\infty}$ smooth boundary and $\varphi$ be a positive $C^{\infty}$ smooth function on $\overline{\Omega}$. Let $\mathcal{O}(\Omega)$ denote the space of all holomorphic functions on $\Omega$. The space
\[
A^2_{\varphi}(\Omega) = \{f\in \mathcal{O}(\Omega): \iint_{\Omega}\vert f(z)\vert^2 \varphi(z)\,dV(z) < \infty\}
\]
is a Hilbert space with respect to the inner product
\[
\langle f,g\rangle_{L^2_{\varphi}} = \iint_{\Omega} f(z) \ov{g(z)} \varphi(z) \, dV(z)
\]
where $dV$ denotes the volume Lebesgue measure on $\Omega$,
and the evaluation functional $h\mapsto h(\zeta)$ on $A^2_{\varphi}(\Omega)$ is continuous for $\zeta\in\Omega$ (see \cite{pw1, pw2}). Hence, there exists a unique function $K_{\varphi}(\cdot,\zeta)\in A^2_{\varphi}(\Omega)$ such that 
\[
f(\zeta) = \langle f, K_{\varphi}(\cdot,\zeta)\rangle_{L^2_{\varphi}} \quad \text{ for all }f\in A^2_{\varphi}(\Omega).
\]
The space $A^2_{\varphi}(\Omega)$ is called the weighted Bergman space and the function $K_{\varphi}(\cdot,\cdot)$ is called the weighted Bergman kernel of $\Omega$ with respect to the weight $\varphi$. When $\varphi \equiv 1$, we get the classical Bergman kernel.

\medskip

Let $\gamma_j$, $j=1,\ldots,n$ denote the $n$ boundary curves of $\Omega$. The harmonic measure functions $\omega_j$ are unique harmonic functions on $\Omega$ that take value $1$ on $\gamma_j$ and $0$ on $\gamma_i$ for $i\neq j$. Let $F_j'$ denote the holomorphic functions on $\Omega$ given by $(1/2)(\partial/\partial z)\omega_j(z)$.
It is known in the classical case that (see \cite{Be0})
\begin{equation}
K(z,w) = 4\pi S(z,w)^2 + \sum_{j,k=1}^{n-1} c_{jk} F_j'(z) \ov{F_k'(w)}
\end{equation}
for some constants $c_{jk}$.

\begin{cor}
Let $\Omega\subset\mathbb{C}$ be a bounded $n$-connected domain with $C^{\infty}$ smooth boundary and $\varphi_k$ be a sequence of positive $C^{\infty}$ smooth functions on $\ov\Omega$ such that $\varphi_k\rightarrow \mathbf{1}$ uniformly on $\ov\Omega$ as $k\rightarrow\infty$. Define
\[
E_k(z,w) = K_{\varphi_k}(z,w) - 4\pi S_{\varphi_k}(z,w)^2, \quad z,w\in\Omega.
\]
Then,
\[
\lim_{k\rightarrow\infty} E_k(z,w) = \sum_{j,k=1}^{n-1} c_{jk} F_j'(z) \ov{F_k'(w)}
\]
locally uniformly on $\Omega\times\Omega$.
\end{cor}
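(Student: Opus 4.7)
The plan is to reduce the convergence of $E_k$ to two separate Ramadanov-type convergence statements — one for the weighted Szeg\H{o} kernel (already available) and one for the weighted Bergman kernel (still to be proved) — and then invoke the known classical decomposition $K = 4\pi S^{2} + \sum c_{jk} F_j' \overline{F_k'}$ cited just before the corollary.

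First, Theorem \ref{ram 1} applies directly with $\varphi \equiv \mathbf 1$: since $\varphi_k \to \mathbf 1$ uniformly on $\partial \Omega$, we get $S_{\varphi_k}(z,w) \to S(z,w)$ locally uniformly on $\Omega \times \Omega$. Squaring preserves local uniform convergence of holomorphic families, so $4\pi S_{\varphi_k}(z,w)^{2} \to 4\pi S(z,w)^{2}$ locally uniformly on $\Omega \times \Omega$.

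Next I would prove the analogous statement on the Bergman side, namely that $K_{\varphi_k}(z,w) \to K(z,w)$ locally uniformly on $\Omega \times \Omega$. The template is the proof of Theorem \ref{ram 1}, with the Cauchy kernel replaced by the sub-mean-value estimate. Since $\varphi_k \to \mathbf 1$ uniformly on $\overline \Omega$, for some $c > 0$ and $k$ large enough one has $c^{-1} \le \varphi_k \le c$ on $\overline \Omega$; hence the norms of $L^{2}_{\varphi_k}(\Omega)$ and $L^{2}(\Omega)$ are uniformly equivalent. For fixed $a \in \Omega$ choose $\delta > 0$ with $\overline{D_{\delta}(a)} \subset \Omega$; the sub-mean-value property gives
\[
|h(a)|^{2} \le \frac{1}{\pi \delta^{2}} \iint_{D_{\delta}(a)} |h|^{2} \, dV \le \frac{c}{\pi \delta^{2}} \, \|h\|_{\varphi_k}^{2}
\]
for every $h \in A^{2}_{\varphi_k}(\Omega)$, so the evaluation functional at $a$ is uniformly bounded. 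Applying this to $h = K_{\varphi_k}(\cdot,a)$ yields a uniform $L^{2}$ bound on $\{K_{\varphi_k}(\cdot,a)\}_{k}$. Then:
\begin{itemize}
\item By Banach--Alaoglu, after passing to a subsequence, the functionals $f \mapsto \langle f, K_{\varphi_k}(\cdot,a)\rangle_{L^{2}}$ converge weak-$\ast$ to some $f \mapsto \langle f, \widetilde K\rangle_{L^{2}}$ with $\widetilde K \in L^{2}(\Omega)$.
\item For $f \in A^{2}(\Omega)$, write $f(a) = \langle f \varphi_k, K_{\varphi_k}(\cdot,a)\rangle_{L^{2}}$ and split as in the proof of Theorem \ref{ram 1}; the term $\langle f(\varphi_k - 1), K_{\varphi_k}\rangle$ vanishes in the limit by the uniform $L^{2}$ bound and $\varphi_k \to 1$ in $L^{\infty}$, and the remaining term converges by weak-$\ast$ convergence. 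This gives $f(a) = \langle f, \widetilde K\rangle$ for all $f \in A^{2}(\Omega)$.
\item Testing against $g \in A^{2}(\Omega)^{\perp}$ shows $\widetilde K \in A^{2}(\Omega)$ and hence $\widetilde K = K(\cdot,a)$.
\item Interior Cauchy estimates plus Montel upgrade the $L^{2}$-weak convergence to locally uniform convergence in $z$; running the same argument jointly in both variables (exactly as in the last paragraph of the proof of Theorem \ref{ram 1}) gives the locally uniform convergence $K_{\varphi_k}(z,w) \to K(z,w)$ on $\Omega \times \Omega$.
\end{itemize}

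Combining the two convergences with the classical identity stated just before the corollary,
\[
E_k(z,w) = K_{\varphi_k}(z,w) - 4\pi S_{\varphi_k}(z,w)^{2} \longrightarrow K(z,w) - 4\pi S(z,w)^{2} = \sum_{j,k=1}^{n-1} c_{jk} F_j'(z) \overline{F_k'(w)}
\]
locally uniformly on $\Omega \times \Omega$, which is the claim. The main obstacle is Step 2: the weighted Bergman Ramadanov statement is not written down earlier in the paper, so one has to carry out that adaptation carefully. The only subtlety there is the interplay between the change of weight $\varphi_k \to 1$ and the duality identifying $\widetilde K$, and this is handled exactly as in Theorem \ref{ram 1}.
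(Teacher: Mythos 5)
Your proposal is correct and follows essentially the same route as the paper: decompose $E_k$ using the classical identity $K-4\pi S^2=\sum c_{jk}F_j'\overline{F_k'}$, invoke Theorem \ref{ram 1} for the Szeg\H{o} part, and prove a weighted Bergman Ramadanov statement by a uniform bound, normal families, and identification of the limit via the reproducing property. The only (immaterial) differences are that the paper obtains the uniform bound from the monotonicity $K_{\varphi_k}(z,z)\le K_c(z,z)$ rather than the sub-mean-value inequality, and identifies the limit by a Fatou/dominated-convergence argument rather than weak-$*$ compactness.
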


\begin{proof}
We can write
\begin{multline*}
K_{\varphi_k}(z,w) - 4\pi S_{\varphi_k}(z,w)^2 = (K_{\varphi_k}(z,w) - K(z,w)) - 4\pi (S_{\varphi_k}(z,w)^2 - S(z,w)^2)\\ 
+ K(z,w) - 4\pi S(z,w)^2  \\
=
(K_{\varphi_k}(z,w) - K(z,w)) - 4\pi (S_{\varphi_k}(z,w)^2 - S(z,w)^2)
+ \sum_{j,k=1}^{n-1} c_{jk} F_j'(z) \ov{F_k'(w)} 
\end{multline*}
Moreover, 
\[
\lim_{k\rightarrow\infty}K_{\varphi_k}(z,w) = K(z,w)
\quad \text{and} \quad
\lim_{k\rightarrow\infty}S_{\varphi_k}(z,w) = S(z,w)
\]
locally uniformly on $\Omega\times\Omega$. The convergence of weighted Szeg\H{o} kernels follows from Theorem \ref{ram 1}. Based on the ideas from \cite{ra1, ra2}, we will give a quick proof for the convergence of weighted Bergman kernels. 

\medskip

Since $\varphi_k\rightarrow \mathbf{1}$ uniformly on $\ov D$ as $k\rightarrow\infty$, there exists a constant $c>0$ such that $\varphi_k(z)\geq c$ for all $z\in \ov{D}$. Let $K_c$ denote the Bergman kernel of $D$ with respect to the constant weight $c$. The monotonicity property of the Bergman kernel gives that $K_{\varphi_k}(z,z)\leq K_{c}(z,z)$ for all $z\in D$. 
Let $W\subset D$ be compact. Using Schwarz inequality, we have for all $z,w\in W$
\begin{eqnarray*}
\vert K_{\varphi_k}(z,w)\vert 
&\leq&
\sqrt{K_{\varphi_k}(z,z)} \sqrt{K_{\varphi_k}(w,w)}
\leq
\sqrt{K_{c}(z,z)} \sqrt{K_{c}(w,w)}\\
&=&
\frac{1}{c}\sqrt{K(z,z)} \sqrt{K(w,w)} \leq M <\infty.   
\end{eqnarray*}
Thus, $\{K_{\varphi_k}\}_{k=1}^{\infty}$ is a Montel family on $D\times D$. It suffices to show that each convergent subsequence converges to $K$. Without loss of generality, let us assume that the entire sequence $\{K_{\varphi_k}\}_{k=1}^{\infty}$ converges to a function $k_0$. The limit $k_0$ is holomorphic in the first variable and antiholomorphic in the second variable. Furthermore, for each $w\in D$
\begin{eqnarray*}
\int_{D}\vert k_0(z,w)\vert^2 dV(z) 
&\leq&
\liminf_{k\rightarrow\infty}\int_D \vert K_{\varphi_k}(z,w)\vert^2\varphi_k(z) dV(z) \\
&=&
\liminf_{k\rightarrow\infty}K_{\varphi_k}(w,w) = k_0(w,w)<\infty.
\end{eqnarray*}
Therefore, $k_0(\cdot,w)\in L^2(D)\cap \mathcal{O}(D)$. A final application of DCT gives the reproducing property
\[
f(w) = \int_{D} f(z) \ov{k_0(z,w)}dV(z)
\]
for all $f\in L^2(D)\cap \mathcal{O}(D)$. By the uniqueness of the Bergman kernel, we have $k_0 = K$.
\end{proof}

\section{Two Applications}

In this section, we provide two additional examples of how information pertaining to $S(z, w)$ can be transferred to $S_{\varphi}(z, w)$ for $\varphi$ close to the constant weight $\mathbf 1$. The first pertains to the zeros of the weighted Szeg\H{o} kernel, while the second one is about a description of certain subspaces of $L^2_{\varphi}(\partial \Omega)$ that are orthogonal to both $H^2_{\varphi}(\partial \Omega)$ and its conjugate $\overline{H^2_{\varphi}(\partial \Omega)}$ -- this is motivated by Theorem $19.1$ in \cite{Be0}.

\begin{thm}\label{Zero 1}
Let $\Omega\subset\mathbb{C}$ be a bounded $n$-connected domain with $C^{\infty}$ smooth boundary curves. Let $\varphi_k$ be a sequence of positive real-valued $C^{\infty}$ functions on $\partial\Omega$ such that $\varphi_k\rightarrow \mathbf{1}$ in $C^{\infty}$ topology on $\partial\Omega$ as $k\rightarrow\infty$. Let $a\in\Omega$. Then there exists a $k_0 \ge 1$ such that
\begin{enumerate}
    \item $S_{\varphi_k}(\cdot,a)$ has $n-1$ zeroes counting multiplicities in $\Omega$,
    \item $S_{\varphi_k}(\cdot,a)$ does not vanish on $\partial\Omega$, and
    \item $L_{\varphi_k}(\cdot,a)$ does not vanish on $\overline{\Omega}$.
\end{enumerate}
for all $k\geq k_0$. Furthermore, if $a \in \Omega$ is such that $S(\cdot,a)$ has $n-1$ simple zeroes in $\Omega$, then $k_0$ can be chosen so that $S_{\varphi_k}(\cdot,a)$ has simple zeroes in $\Omega$ for all $k\geq k_0$.
\end{thm}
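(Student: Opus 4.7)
The plan is to leverage the uniform boundary convergence from Theorem \ref{ram 2} and Theorem \ref{ram 3}, together with the classical facts (from Bell's work, $\varphi \equiv \mathbf{1}$) that $S(\cdot,a)$ has exactly $n-1$ zeroes (counting multiplicities) in $\Omega$, does not vanish on $\partial\Omega$, and that $L(\cdot,a)$ does not vanish on $\overline{\Omega}\setminus\{a\}$. The main tool to transfer the zero count is Hurwitz's theorem, while to rule out zeroes near the boundary I would use simple uniform estimates.

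First, fix $a\in\Omega$. Since $C^{\infty}$ convergence implies uniform convergence, Theorem \ref{ram 2} applied to the compact slice $\overline{\Omega}\times\{a\}\subset\overline{\Omega}\times\Omega$ gives $S_{\varphi_k}(\cdot,a)\to S(\cdot,a)$ uniformly on $\overline{\Omega}$, and Theorem \ref{ram 3} similarly gives $l_{\varphi_k}(\cdot,a)\to l(\cdot,a)$ uniformly on $\overline{\Omega}$. Let $p_1,\ldots,p_m$ be the distinct zeroes of $S(\cdot,a)$ in $\Omega$ with multiplicities $m_1,\ldots,m_m$, so $\sum_j m_j = n-1$. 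Choose $r>0$ small enough that the closed disks $\overline{D_r(p_j)}$ are pairwise disjoint and compactly contained in $\Omega$, and set $K=\overline{\Omega}\setminus\bigsqcup_j D_r(p_j)$. On the compact set $K$, $S(\cdot,a)$ is bounded below in modulus by some $\delta>0$, so uniform convergence gives an index $k_0$ beyond which $S_{\varphi_k}(\cdot,a)$ does not vanish on $K$. This immediately yields (2) (nonvanishing on $\partial\Omega\subset K$), and applying Hurwitz's theorem on each $D_r(p_j)$ produces exactly $m_j$ zeroes of $S_{\varphi_k}(\cdot,a)$ there, totalling $n-1$ zeroes in $\Omega$; this proves (1). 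If in addition each $m_j = 1$, then by shrinking $r$ so that each disk contains a single simple zero $p_j$, Hurwitz's theorem ensures each $D_r(p_j)$ contains exactly one zero of $S_{\varphi_k}(\cdot,a)$ (counted with multiplicity), hence a single simple zero.

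For (3), I would treat the situation near $\partial\Omega$, away from $a$, and near $a$ separately. Since $L(\cdot,a)$ is nonvanishing on $\overline{\Omega}\setminus\{a\}$, choose $\epsilon>0$ with $\overline{D_\epsilon(a)}\subset\Omega$. On the compact set $\overline{\Omega}\setminus D_\epsilon(a)$, $L(\cdot,a)$ is bounded below in modulus, and the uniform convergence $l_{\varphi_k}(\cdot,a)\to l(\cdot,a)$ on $\overline{\Omega}$ translates into uniform convergence $L_{\varphi_k}(\cdot,a)\to L(\cdot,a)$ on this compact set, so $L_{\varphi_k}(\cdot,a)$ is nonvanishing there for large $k$. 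On $D_\epsilon(a)\setminus\{a\}$, write
\[
L_{\varphi_k}(z,a) = \frac{1}{2\pi(z-a)} + l_{\varphi_k}(z,a),
\]
and use the uniform bound on $\{l_{\varphi_k}(\cdot,a)\}$ on $\overline{\Omega}$ to shrink $\epsilon$ so that the polar term dominates, ruling out zeroes in this punctured disk. Alternatively, the relation $\varphi_k(z)S_{\varphi_k}(a,z) = i\,\overline{L_{\varphi_k}(z,a)T(z)}$ on $\partial\Omega$ together with (2) already gives nonvanishing on $\partial\Omega$, but the interior argument via the pole and the uniform bound on $l_{\varphi_k}$ is the cleanest route to handle a neighbourhood of $a$.

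The main conceptual point, and the step that requires the most care, is extracting uniform (not merely pointwise or locally uniform interior) convergence up to the boundary for fixed $a$ from the two variable statements in Theorems \ref{ram 2} and \ref{ram 3}; once this is in hand, everything reduces to Hurwitz's theorem plus a uniform lower bound for $|S(\cdot,a)|$ and $|L(\cdot,a)|$ on suitable compact sets. The simple-zero refinement is then essentially free, since Hurwitz yields the exact multiplicity in each disk.
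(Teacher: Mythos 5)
Your proof is correct, but it takes a genuinely different route from the paper's at the decisive step. Both arguments obtain the \emph{lower} bound --- at least $n-1$ zeroes of $S_{\varphi_k}(\cdot,a)$, located near the zeroes of $S(\cdot,a)$ --- from locally uniform convergence plus the argument principle (Hurwitz), and both obtain (2) from uniform boundary convergence together with the classical non-vanishing of $S(\cdot,a)$ on $\partial\Omega$. You differ in how you exclude additional zeroes and in how you prove (3). The paper settles both simultaneously by an intrinsic argument-principle computation in the weighted setting: the boundary identity gives $\tfrac{1}{i}S_{\varphi_k}(z,a)L_{\varphi_k}(z,a)T(z)=\varphi_k(z)\lvert S_{\varphi_k}(z,a)\rvert^2>0$ on $\partial\Omega$, so the winding of $S_{\varphi_k}L_{\varphi_k}$ cancels that of $T$, and since $L_{\varphi_k}(\cdot,a)$ has exactly one simple pole, the zeroes of $S_{\varphi_k}(\cdot,a)$ and $L_{\varphi_k}(\cdot,a)$ in $\Omega$ together number exactly $n-1$; the lower bound then forces $L_{\varphi_k}(\cdot,a)$ to be zero-free in $\Omega$. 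You instead argue by pure perturbation off the classical case: a uniform lower bound for $\lvert S(\cdot,a)\rvert$ on $\overline{\Omega}$ minus small disks around its zeroes rules out extra zeroes, and for (3) you combine the classical fact that $L(\cdot,a)\neq 0$ on $\overline{\Omega}\setminus\{a\}$ with the uniform convergence $l_{\varphi_k}(\cdot,a)\to l(\cdot,a)$ on $\overline{\Omega}$ from Theorem \ref{ram 3} and pole domination near $a$. Your route is more elementary and entirely legitimate (the uniform convergence on $\overline{\Omega}\times\{a\}$ you need does follow from Theorems \ref{ram 2} and \ref{ram 3}, since $\overline{\Omega}\times\{a\}$ is compact in $\overline{\Omega}\times\Omega$), but it requires two inputs the paper's proof avoids: the zero-freeness of the classical Garabedian kernel on $\overline{\Omega}\setminus\{a\}$ and the convergence of the weighted Garabedian kernels. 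The paper's route yields, as a by-product, the structural fact that the zeroes of $S_{\varphi}(\cdot,a)$ and $L_{\varphi}(\cdot,a)$ in $\Omega$ always total $n-1$ once $S_{\varphi}(\cdot,a)$ is boundary zero-free, which your argument does not produce. The simple-zero refinement is handled identically (and immediately) in both: each small disk carries exactly one zero counted with multiplicity.
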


\begin{proof}
Assume that the zero set of $S(\cdot, a)$ is $\{a_1,\ldots,a_{r}\}$ such that $S(z,a_i)$ vanishes at $z=a_i$ with multiplicity $m_i$. Here, we must have $m_1 + m_2 + \ldots + m_r = n-1$. Choose $\epsilon>0$ small enough that $\overline{B(a_i,\epsilon)}\subset\Omega$ for all $i$ and $\overline{B(a_j,\epsilon)}\cap\overline{B(a_i,\epsilon)}=\emptyset$ for all $j\neq i$.  

\medskip

Since $S_{\varphi_k}(\cdot,a)\rightarrow S(\cdot,a)$ uniformly on $C(a_j,\epsilon) = \partial B(a_j, \epsilon)$ and $S(\cdot,a)$ does not vanish on $C(a_j,\epsilon)$, there exists $k_1\in \mathbb{Z}^+$ such that $S_{\varphi_k}(\cdot,a)$ does not vanish on $C(a_j,\epsilon)$ for all $1\leq j\leq r$ and $k\geq k_1$. Fix $i\in \{1,\ldots,r\}$. Then,
\[
N(k) = \frac{1}{2\pi i} \int_{C(a_i,\epsilon)}\frac{(\partial/\partial z)S_{\varphi_k}(z,a)}{S_{\varphi_k}(z,a)} dz,\quad k\geq k_1,
\]
which equals the number of zeroes of $S_{\varphi_k}(\cdot,a)$ in $B(a_i,\epsilon)$ counting multiplicities, converges to 
\[
\frac{1}{2\pi i} \int_{C(a_i,\epsilon)}\frac{(\partial/\partial z)S(z,a)}{S(z,a)} dz,
\]
that gives the number of zeroes of $S(\cdot,a)$ in $B(a_i,\epsilon)$ counting multiplicities, which is $m_i$. Therefore, $N(k)$ is an eventually constant sequence, equal to $m_i$. Therefore, $S_{\varphi_k}(\cdot,a)$ has $m_i$ zeroes counting multiplicities in $B(a_i,\epsilon)$ for large enough $k$. 

\medskip 

Thus, there exists $k_2\geq k_1$ such that $S_{\varphi_k}(\cdot,a)$ has $m_j$ zeroes counting multiplicities in $B(a_j,\epsilon)$ for all $k\geq k_2$ and $1\leq j\leq r$. That is, $S_{\varphi_k}(\cdot,a)$ has at least $n-1$ zeroes counting multiplicities in $\Omega$ for all $k\geq k_2$. 

\medskip

Since $S_{\varphi_k}(\cdot,a)\rightarrow S(\cdot,a)$ uniformly on $\partial\Omega$ and $S(\cdot,a)$ does not vanish on $\partial\Omega$, there exists $k_3\geq k_2$ such that $S_{\varphi_k}(\cdot,a)$ does not vanish on $\partial\Omega$ for all $k\geq k_3$. Since
\begin{equation}\label{boundary relation}
\varphi_k(z)\,\overline{S_{\varphi_k}(z,a)}=\frac{1}{i} \,L_{\varphi_k}(z,a)\, T(z),\quad z\in\partial\Omega,
\end{equation}
the Garabedian kernels $L_{\varphi_k}(\cdot,a)$ also do not vanish on $\partial\Omega$ for all $k\geq k_3$. Therefore, from (\ref{boundary relation}) we get that
\[
\frac{1}{i} S_{\varphi_k}(z,a) \,L_{\varphi_k}(z,a)\, T(z) = \varphi_k(z) \vert S_{\varphi_k}(z,a)\vert^2 > 0 
\]
for all $z\in \partial\Omega$ and $k\geq k_3$.
Thus, $\Delta \arg (S_{\varphi_k}(\cdot,a)\,L_{\varphi_k}(\cdot,a)) + \Delta \arg T =0$. By the argument principle, this means that
\begin{multline}
2\pi \left( \text{no. of zeroes of } S_{\varphi_k}(\cdot,a)\,L_{\varphi_k}(\cdot,a) \text{ in }\Omega\, 
-
\text{ no. of poles of } S_{\varphi_k}(\cdot,a)\,L_{\varphi_k}(\cdot,a) \text{ in } \Omega \right) \\
+ 2\pi(1 - (n-1)) =0,
\end{multline}
where the zeroes and poles are counted with multiplicities. The Szeg\H{o} kernel $S_{\varphi_k}(\cdot,a)$ is holomorphic on $\Omega$ and the Garabedian kernel is holomorphic on $\Omega\setminus\{a\}$ with a simple pole at $z=a$. Therefore, the combined number of zeroes of $S_{\varphi_k}(\cdot,a)$ and $L_{\varphi_k}(\cdot,a)$ in $\Omega$ counting multiplicities is $1-(1-(n-1))=n-1$ for all $k\geq k_3$.

\medskip

We have shown the existence of at least $n-1$ zeroes of $S_{\varphi_k}(\cdot,a)$ counting multiplicities for all $k\geq k_3$. Therefore, the above statement implies that these are the only zeroes of $S_{\varphi_k}(\cdot,a)$, and $L_{\varphi_k}(\cdot, a)$ does not vanish in $\Omega$ for all $k\geq k_3$. 
\end{proof}

For a positive $C^{\infty}$ function $\rho$ on $\partial\Omega$, define $\mathcal{Q}_{\rho}$ to be the space of functions in $L^2_{\rho}(\partial\Omega)$ that are orthogonal to both the Hardy space $H^2_{\rho}(\partial\Omega)$ and to the space of functions that are complex conjugates of functions in $H^2_{\rho}(\partial\Omega)$. 

\medskip

Recall that the harmonic measure functions $\omega_j$ are unique harmonic functions on $D$ that take value $1$ on $\gamma_j$ and $0$ on $\gamma_i$ for $i\neq j$, and $F_j'$ are the holomorphic functions on $D$ given by $(1/2)(\partial/\partial z)\omega_j(z)$.

\medskip

It is known (see \cite{Be0}) that $\mathcal{Q}=\{hT : h\in \mathcal{F}'\}$, where $\mathcal{F}'=\text{span}_{\mathbb{C}}\{F_j' : j=1,\ldots, n-1\}$. Therefore, it follows immediately that
\[
\mathcal{Q}_{\varphi} = \{\varphi^{-1}hT: h\in\mathcal{F}'\}.
\]
Choose $a\in\Omega$ close to $\partial\Omega$ so that $S(\cdot,a)$ has simple zeroes $\{a_1,\ldots,a_n\}$. By \cite{Be0},
\begin{equation}
\mathcal{F}'=\text{span}_{\mathbb{C}}\{L(\cdot,a_j)S(\cdot,a) : j=1,\ldots,n-1\} = \text{span}_{\mathbb{C}}\{L(\cdot,a)S(\cdot,a_j) : j=1,\ldots,n-1\}. 
\end{equation}
The following theorem describes $\mathcal{F}'$ in terms of the weighted Szeg\H{o} and Garabedian kernels.

\begin{thm}
Let $\Omega$ be a bounded $n$-connected domain with $C^{\infty}$ smooth boundary and $\varphi_k$ a sequence of positive real-valued $C^{\infty}$ functions on $\partial\Omega$ such that $\varphi_k\rightarrow \mathbf{1}$ in the $C^{\infty}$-topology on $\partial\Omega$ as $k\rightarrow\infty$. Choose $a\in\Omega$ so that $S(\cdot,a)$ has $n-1$ distinct simple zeroes. Then there exists $k_0\ge 1$ such that $S_{\varphi_k}(\cdot,a)$ has $n-1$ distinct simple zeroes for all $k\geq k_0$ and the following holds:

\medskip

Choose $\varphi$ from the set $\{\varphi_k :k\geq k_0\}$ and let  $Z(S_{\varphi}(\cdot,a))=\{b_1,\ldots,b_{n-1}\}$ be the zero set of $S_{\varphi}(\cdot, a)$. Then, 
\begin{equation}
\mathcal{F}'
=
\text{span}_{\mathbb{C}}\{L_{\varphi}(\cdot,b_j)S_{\varphi}(\cdot,a) : j=1,\ldots,n-1\}
=
\text{span}_{\mathbb{C}}\{L_{\varphi}(\cdot,a)S_{\varphi}(\cdot,b_j) : j=1,\ldots,n-1\}. 
\end{equation}
The space $\mathcal{Q}_{\varphi}$ of functions in $L^2_{\varphi}(\partial\Omega)$ orthogonal to both $H^2_{\varphi}(\partial\Omega)$ and $\overline{H^2_{\varphi}(\partial\Omega)}$ is equal to $\varphi^{-1}\mathcal{F}'T$.
\end{thm}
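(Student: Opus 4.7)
The plan splits into three steps: existence of the threshold $k_0$ with distinct simple zeroes, the two span identities for $\mathcal{F}'$, and the formula for $\mathcal{Q}_\varphi$.

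For the first step, invoke Theorem \ref{Zero 1}: since $S(\cdot, a)$ has $n-1$ distinct simple zeroes $a_1,\dots,a_{n-1}$, choose $\epsilon>0$ with $\overline{B(a_i,\epsilon)}\subset\Omega$ pairwise disjoint, and apply the argument-principle count from the proof of Theorem \ref{Zero 1} on each $\partial B(a_i,\epsilon)$. For $k$ large, $S_{\varphi_k}(\cdot,a)$ has exactly one zero in each $B(a_i,\epsilon)$; combined with the total count of $n-1$ zeroes in $\Omega$ (also from Theorem \ref{Zero 1}), each such zero is simple and all are pairwise distinct.

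For the second step, fix $\varphi=\varphi_k$ and write $Z(S_\varphi(\cdot,a))=\{b_1,\dots,b_{n-1}\}$. The key tool is the boundary identity $L_\varphi(z,w)T(z)=i\varphi(z)\overline{S_\varphi(z,w)}$ on $\partial\Omega$, a rearrangement of the relation $S_\varphi(a,z)=\frac{1}{i\varphi(z)}L_\varphi(z,a)T(z)$ from Section~\ref{Szego_kernel}. I would show each $g_j:=L_\varphi(\cdot,b_j)S_\varphi(\cdot,a)$ lies in $\mathcal{F}'$: the simple pole of $L_\varphi(\cdot,b_j)$ at $b_j$ is cancelled by the simple zero of $S_\varphi(\cdot,a)$, so $g_j\in A^\infty(\Omega)$. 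For $v\in A^\infty(\partial\Omega)$, the boundary identity gives
\[
\langle g_j T, v\rangle = i\int_{\partial\Omega}\varphi\,S_\varphi(\cdot,a)\,\overline{S_\varphi(\cdot,b_j)\,v}\,ds = i\,\overline{S_\varphi(a,b_j)\,v(a)} = 0
\]
by the reproducing property together with $S_\varphi(a,b_j)=\overline{S_\varphi(b_j,a)}=0$, and
\[
\int_{\partial\Omega} g_j T\,v\,ds = \int_{\partial\Omega} L_\varphi(\cdot,b_j)S_\varphi(\cdot,a)\,v\,dz = 0
\]
by Cauchy's theorem, as the integrand is in $A^\infty(\Omega)$. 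Density of $A^\infty(\partial\Omega)$ in $H^2$ extends both orthogonalities, so $g_j T\in\mathcal{Q}=\mathcal{F}'T$, giving $g_j\in\mathcal{F}'$. The $g_j$'s are linearly independent since the $L_\varphi(\cdot,b_j)$'s have simple poles at the distinct points $b_j$, and matching $\dim_{\mathbb C}\mathcal{F}'=n-1$ yields the first identity. The second identity is analogous with $\tilde g_j:=L_\varphi(\cdot,a)S_\varphi(\cdot,b_j)$: the pole at $a$ is cancelled because $S_\varphi(a,b_j)=0$ ensures $S_\varphi(\cdot,b_j)$ vanishes at $a$; linear independence of $\{\tilde g_j\}$ reduces to that of $\{S_\varphi(\cdot,b_j)\}$, which follows from the reproducing identity $\sum_j\bar\beta_j h(b_j)=0$ for all $h\in H^2_\varphi$ applied to Lagrange interpolants separating the distinct $b_j$'s.

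For the third step, note that $u\in L^2_\varphi(\partial\Omega)$ is orthogonal to $H^2_\varphi$ (respectively $\overline{H^2_\varphi}$) under $\langle\cdot,\cdot\rangle_\varphi$ if and only if $u\varphi$ is orthogonal to $H^2$ (respectively $\overline{H^2}$) under $\langle\cdot,\cdot\rangle$; hence $u\in\mathcal{Q}_\varphi\Leftrightarrow u\varphi\in\mathcal{Q}=\mathcal{F}'T$, giving $\mathcal{Q}_\varphi=\varphi^{-1}\mathcal{F}'T$. The main delicacy I anticipate is ensuring the pole/zero cancellations in $g_j$ and $\tilde g_j$ render the relevant integrands genuinely holomorphic (rather than merely bounded) near $b_j$ and $a$, so Cauchy's theorem truly applies; this is precisely where the simple-zero hypothesis on $a$ and the conclusion of Theorem \ref{Zero 1} are essential.
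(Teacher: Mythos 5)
Your proposal is correct, but it establishes the key identity $\mathcal{Q}=\mathcal{L}_{\varphi}T$ (where $\mathcal{L}_{\varphi}=\text{span}_{\mathbb{C}}\{L_{\varphi}(\cdot,b_j)S_{\varphi}(\cdot,a)\}$) in the opposite direction from the paper. You prove the inclusion $\mathcal{L}_{\varphi}T\subseteq\mathcal{Q}$ directly: the pole--zero cancellation puts each $g_j=L_{\varphi}(\cdot,b_j)S_{\varphi}(\cdot,a)$ in $A^{\infty}(\Omega)$, orthogonality of $g_jT$ to $H^2(\partial\Omega)$ follows from the boundary identity plus the reproducing property and $S_{\varphi}(a,b_j)=0$, and orthogonality to $\overline{H^2(\partial\Omega)}$ is Cauchy's theorem; you then need the linear independence of the $g_j$ (which you correctly get from the simple poles of the $L_{\varphi}(\cdot,b_j)$ at distinct points) together with $\dim\mathcal{Q}=n-1$ to upgrade to equality. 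The paper instead proves $\mathcal{Q}\subseteq\mathcal{L}_{\varphi}T$: it takes an arbitrary $f=hT=\overline{HT}\in\mathcal{Q}$, uses the boundary identity to rewrite $ih/S_{\varphi}(\cdot,a)$ as a function orthogonal to $H^2_{1/\varphi}(\partial\Omega)$, performs a partial-fraction decomposition at the simple zeroes $b_j$, and identifies $P^{\perp}_{1/\varphi}\bigl(1/(z-b_j)\bigr)=2\pi L_{\varphi}(\cdot,b_j)$ to exhibit $f$ explicitly as a combination of the $g_jT$; the dimension count $\dim\mathcal{L}_{\varphi}T\leq n-1=\dim\mathcal{Q}$ then closes the argument without any separate independence check. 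Your route is more elementary (no weighted projection $P^{\perp}_{1/\varphi}$, no partial fractions) at the cost of the explicit independence verification, and you prove the second span identity by repeating the argument for $\tilde{g}_j=L_{\varphi}(\cdot,a)S_{\varphi}(\cdot,b_j)$, whereas the paper deduces it from the first via the conjugation relation $L_{\varphi}(z,\xi)S_{\varphi}(z,\zeta)T(z)=-\overline{S_{\varphi}(z,\xi)L_{\varphi}(z,\zeta)T(z)}$ on $\partial\Omega$; both choices are sound, and your handling of $k_0$ (argument principle on small circles around the $a_i$ combined with the total count from Theorem \ref{Zero 1}) and of $\mathcal{Q}_{\varphi}=\varphi^{-1}\mathcal{F}'T$ matches the paper.
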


\begin{proof}
Let $\mathcal{L}_{\varphi}=\text{span}_{\mathbb{C}}\{L_{\varphi}(\cdot,b_j)S_{\varphi}(\cdot,a) : j=1,\ldots,n-1\}$. We first show that $\mathcal{Q}\subset \mathcal{L}_{\varphi}T$. Let $f\in\mathcal{Q}$, i.e., $f$ is orthogonal to both $H^2(\partial\Omega)$ and $\overline{H^2(\partial\Omega)}$. Therefore, there exists $h,\,H\in H^2(\partial\Omega)$ such that 
\begin{equation}\label{orthogonal}
f= hT =\overline{HT}.
\end{equation}
Recall the identity
\begin{equation}\label{BdyRelation}
\varphi(z)\,\overline{S_{\varphi}(z,a)}=\frac{1}{i} \,L_{\varphi}(z,a)\, T(z),\quad z\in\partial\Omega.
\end{equation}
Therefore, 
\[
h \frac{i}{\varphi}\frac{\overline{L_{\varphi}(\cdot,a)}}{S_{\varphi}(\cdot,a)}= \overline{HT},
\]
which can be rewritten as 
\begin{equation}\label{orthogonal-01}
\frac{i h}{S_{\varphi}(\cdot,a)}= \varphi\,\overline{\left(\frac{H}{L_{\varphi}(\cdot,a)} T\right)}.
\end{equation}
The function on the right hand side of the above equation is orthogonal to $H^2_{1/\varphi}(\partial\Omega)$. Therefore, taking the orthogonal weighted Szeg\H{o} projection $P^{\perp}_{1/\varphi}$ with respect to the weight $1/\varphi$ on both sides of (\ref{orthogonal-01}) gives
\begin{equation}\label{Orthogonal1}
P^{\perp}_{1/\varphi} \left(\frac{i h}{S_{\varphi}(\cdot,a)}\right) = \varphi\,\overline{\left(\frac{H}{L_{\varphi}(\cdot,a)} T\right)}.
\end{equation}
We can also write
\[
\frac{i h(z)}{S_{\varphi}(z,a)}= G(z) + \sum_{j=1}^{n-1}c_j \frac{1}{z-b_j},
\]
where $G\in H^2(\partial\Omega)$ and $c_j=i h(b_j)/ {S_{\varphi}}'(b_j,a)$ is the residue of the simple pole of the meromorphic function $i h(z)/S_{\varphi}(z,a)$ at $z=b_j$. Thus,
\begin{equation}\label{Orthogonal2}
P^{\perp}_{1/\varphi} \left(\frac{i h}{S_{\varphi}(\cdot,a)}\right) = \sum_{j=1}^{n-1} c_j P^{\perp}_{1/\varphi}\left(\frac{1}{z-b_j}\right) 
\end{equation}
\medskip

It is immediate from (\ref{BdyRelation}) that $L_{\varphi}(\cdot,a)$ is orthogonal to $H^2_{1/\varphi}(\partial\Omega)$. For every $w\in\Omega$, there exists a function $H_w\in H^2(\partial\Omega)$ such that
\[
L_{\varphi}(z,w) = \frac{1}{2\pi}\frac{1}{z-w} - i H_w(z).
\]
Therefore, for $w\in\Omega$
\begin{equation}
L_{\varphi}(z,w) = P^{\perp}_{1/\varphi} (L_{\varphi}(z,w)) = \frac{1}{2\pi} P^{\perp}_{1/\varphi} \left(\frac{1}{z-w}\right).
\end{equation}
Hence, 
\begin{equation}\label{Orthogonal3}
P^{\perp}_{1/\varphi} \left(\frac{i h}{S_{\varphi}(\cdot,a)}\right) = \sum_{j=1}^{n-1} c_j P^{\perp}_{1/\varphi}\left(\frac{1}{z-b_j}\right) = 2\pi\sum_{j=1}^{n-1} c_j L_{\varphi}(\cdot,b_j).
\end{equation}
On comparing (\ref{Orthogonal1}) and (\ref{Orthogonal3}), we obtain
\begin{eqnarray*}
f
&=&
\overline{HT} 
=
\frac{1}{\varphi} \overline{L_{\varphi}(\cdot,a)} \,P^{\perp}_{1/\varphi} \left(\frac{i h}{S_{\varphi}(\cdot,a)}\right) 
=
\frac{1}{\varphi} \overline{L_{\varphi}(\cdot,a)} \,2\pi\sum_{j=1}^{n-1} c_j L_{\varphi}(\cdot,b_j)\\
&=&
-i T S_{\varphi}(\cdot,a)\,2\pi\sum_{j=1}^{n-1} c_j L_{\varphi}(\cdot,b_j)\\
&=&
\sum_{j=1}^{n-1}(-2\pi i c_j) L_{\varphi}(\cdot,b_j)S_{\varphi}(\cdot,a) T.
\end{eqnarray*}
Therefore, $\mathcal{Q}\subset \mathcal{L}_{\varphi}T$. The complex vector space $\mathcal{L}_{\varphi}T$ has dimension less than or equal to $(n-1)$ and the vector space $\mathcal{Q}$ has dimension $(n-1)$ (see \cite{Be0}). Hence, $\mathcal{Q}=\mathcal{L}_{\varphi}T$. Since $\mathcal{Q}=\mathcal{F}'T$ and 
\[
L_{\varphi}(z,\xi) S_{\varphi}(z,\zeta) T(z)= -\overline{S_{\varphi}(z,\xi) L_{\varphi}(z,\zeta) T(z)}\quad \text{for all }\xi, \zeta \text{ in }\Omega,
\]
we finally conclude that
\[
\mathcal{F}'=\text{span}_{\mathbb{C}}\{L_{\varphi}(\cdot,b_j)S_{\varphi}(\cdot,a) : j=1,\ldots,n-1\} = \text{span}_{\mathbb{C}}\{L_{\varphi}(\cdot,a)S_{\varphi}(\cdot,b_j) : j=1,\ldots,n-1\}. 
\]
\end{proof}

\section{The reduced Bergman kernel}\label{Reduced_Bergman}

Let $\Omega\subset\mathbb{C}$ be a domain, fix $\zeta\in \Omega$ and an integer $n \ge 1$. Then
\[
AD(\Omega,\zeta^n)=\lbrace f\in \mathcal{O}(\Omega): f(\zeta)=f'(\zeta)=\cdots = f^{(n-1)}(\zeta)=0\,\text{ and }\,\int_{\Omega}\lvert f'(z)\rvert^2 dxdy <\infty\rbrace.
\]
is a Hilbert space with respect to the inner product
\[
\langle f,g\rangle_{AD(\Omega,\zeta^n)}=\int_{\Omega}f'(z)\,\overline{g'(z)}\,dx dy,\quad f,g\in AD(\Omega,\zeta^n).
\]
Further, the Cauchy integral formula for the $n$-th derivative $f^{(n)}$ shows that the linear functional defined by $AD(\Omega,\zeta^n)\ni f\mapsto f^{(n)}(\zeta)\in\mathbb{C}$ is continuous. Thus, there exists a unique function $M(\cdot,\zeta^n,\Omega)\in AD(\Omega,\zeta^n)$ such that
$f^{(n)}(\zeta)=\langle f, M(\cdot,\zeta^n,\Omega)\rangle$ for every $f\in AD(\Omega,\zeta^n)$. Define
\[
\tilde{K}_{\Omega,n}(z,\zeta)=\frac{\partial}{\partial z} M(z,\zeta^n,\Omega),\quad z,\zeta\in \Omega.
\]
The kernel $\tilde{K}_{\Omega,n}$ is called the $n^{th}$-order reduced Bergman kernel of $\Omega$. So,
\begin{equation}
f^{(n)}(\zeta)=\int_{\Omega} f'(z)\,\overline{\tilde{K}_{\Omega,n}(z,\zeta)}\,dxdy
\quad
\text{for } f\in AD(\Omega,\zeta^n).
\end{equation}
For $n=1$, this gives the reduced Bergman kernel $\tilde{K}_{\Omega}$ of $\Omega$.

\begin{thm} [See \cite{Bergman}, \cite{Burbea}]
For a domain $\Omega\subset\mathbb{C}$ and $n\geq 2$, 
\begin{equation}\label{det}
\tilde{K}_{\Omega,n}(z,\zeta)=
\frac{(-1)^{n-1}}{J_{n-2}}
\det\left(
\begin{matrix}
\tilde{K}_{0,\bar{0}}(z,\zeta)& \ldots & \tilde{K}_{0,\overline{n-1}}(z,\zeta)\\
\tilde{K}_{0,\bar{0}}& \ldots & \tilde{K}_{0,\overline{n-1}}\\
\tilde{K}_{1,\bar{0}}& \ldots & \tilde{K}_{1,\overline{n-1}}\\
\vdots & &\vdots\\
\tilde{K}_{n-2,\bar{0}}& \ldots & \tilde{K}_{n-2,\overline{n-1}}
\end{matrix}
\right),
\end{equation}
where $J_{n}=\det\left(\tilde{K}_{j\bar{k}}\right)_{j,k=0}^n$ and 
\[
\tilde{K}_{j\bar{k}}(z,\zeta)=\frac{\partial^{j+k}}{\partial z^j\partial\bar{\zeta}^k}\tilde{K}_{\Omega}(z,\zeta),\quad \tilde{K}_{j\bar{k}}\equiv \tilde{K}_{j\bar{k}}(\zeta,\zeta).
\]
Here, $J_n>0$ for all $\zeta\in N_{\Omega}=\{z\in \Omega: \tilde{K}_{\Omega}(z,z)=0\}$. Thus, $\tilde{K}_{\Omega,n}\in C^{\infty}(\Omega\times (\Omega\setminus N_{\Omega}))$. If $\Omega$ is bounded, then $N_{\Omega}=\emptyset$. Therefore, $\tilde{K}_{\Omega,n}\in C^{\infty}(\Omega\times \Omega)$ when $\Omega$ is bounded.
\end{thm}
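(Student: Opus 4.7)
The plan is to realize formula (\ref{det}) as an orthogonal projection computation in the Hilbert space $H := AD(\Omega,\zeta)$, equipped with $\langle f, g\rangle = \int_\Omega f'\,\overline{g'}\,dx\,dy$. For each integer $k \ge 1$, the Cauchy integral formula applied to $f'$ on a small disc around $\zeta$ shows that $f \mapsto f^{(k)}(\zeta)$ is continuous on $H$, hence possesses a unique representer $\Phi_k \in H$. Because $\overline{\tilde{K}_\Omega(z,\zeta)}$ is holomorphic in $\zeta$ (by hermitian symmetry of the reduced Bergman kernel), I can differentiate the identity $f'(\zeta) = \int_\Omega f'(z)\,\overline{\tilde{K}_\Omega(z,\zeta)}\,dx\,dy$ repeatedly in $\zeta$ under the integral sign, obtaining $\Phi_k'(z) = \tilde{K}_{0,\overline{k-1}}(z,\zeta)$ together with the Gram values $\langle \Phi_i, \Phi_j\rangle = \Phi_i^{(j)}(\zeta) = \tilde{K}_{j-1,\overline{i-1}}$.

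Next, $AD(\Omega,\zeta^n) \subset H$ is exactly the orthogonal complement of $V := \mathrm{span}\{\Phi_1,\ldots,\Phi_{n-1}\}$, since $f \in AD(\Omega,\zeta^n)$ iff $\langle f, \Phi_k\rangle = f^{(k)}(\zeta) = 0$ for every $1 \le k \le n-1$. Writing $M := M(\cdot,\zeta^n,\Omega) = \Phi_n - \sum_{j=1}^{n-1} c_j \Phi_j$, the conditions $\langle M, \Phi_i\rangle = 0$ translate into the linear system $\sum_{j=1}^{n-1} c_j\,\tilde{K}_{i-1,\overline{j-1}} = \tilde{K}_{i-1,\overline{n-1}}$, $1 \le i \le n-1$, whose coefficient matrix is the Gram matrix of $\Phi_1,\ldots,\Phi_{n-1}$ with determinant $J_{n-2}$. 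Linear independence of the $\Phi_k$'s (tested against the functions $(z-\zeta)^k$, which lie in $H$ for bounded $\Omega$; in the general case a smooth cutoff around $\zeta$ is used) guarantees $J_{n-2} > 0$ whenever $\zeta \notin N_\Omega$. Cramer's rule then gives $c_{j+1} = \det B_j / J_{n-2}$, where $B_j$ is the coefficient matrix with its $j$-th column replaced by $(\tilde{K}_{i,\overline{n-1}})_{i=0}^{n-2}$.

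Applying $\partial/\partial z$ to $M$ yields $\tilde{K}_{\Omega,n}(z,\zeta) = \tilde{K}_{0,\overline{n-1}}(z,\zeta) - \sum_{j=0}^{n-2} c_{j+1}\,\tilde{K}_{0,\bar j}(z,\zeta)$, and to identify this with (\ref{det}) I expand the $n \times n$ determinant along its top row. The cofactor of $\tilde{K}_{0,\bar k}(z,\zeta)$ is the minor obtained by deleting column $k$ from the lower $(n-1) \times n$ block; it equals $J_{n-2}$ when $k = n-1$, and $(-1)^{n-2-k} \det B_k$ otherwise (the sign accounts for the $n-2-k$ column transpositions required to bring the last column into position $k$). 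Combining the cofactor sign $(-1)^k$ with $(-1)^{n-2-k}$ produces the overall factor $(-1)^{n-1}$, yielding $(-1)^{n-1}\det A = J_{n-2}\,\tilde{K}_{\Omega,n}(z,\zeta)$ as required. Smoothness $\tilde{K}_{\Omega,n} \in C^\infty(\Omega \times (\Omega \setminus N_\Omega))$ follows immediately from the determinantal formula and the joint smoothness of $\tilde{K}_\Omega$; for bounded $\Omega$, the function $1 = (d/dz)(z-\zeta)$ lies in the reduced Bergman space of $\Omega$ and does not vanish at $\zeta$, so $\tilde{K}_\Omega(\zeta,\zeta) = \|\tilde{K}_\Omega(\cdot,\zeta)\|^2 > 0$, forcing $N_\Omega = \emptyset$.

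The main obstacle will be the careful sign bookkeeping in the cofactor-to-Cramer translation — purely combinatorial but easy to err in, especially because the Cramer minors and the expanded minors are indexed differently and differ by a column permutation. The other technical step, differentiation under the integral to derive the $\Phi_k$, is routine because the integrand is jointly smooth in $(z,\zeta)$ and admits $L^2$-dominants uniform on compacta.
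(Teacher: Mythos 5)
The paper does not prove this statement at all --- it is imported verbatim from Bergman's book and Burbea's paper, so there is no internal argument to compare against. Your proposal supplies a genuine, self-contained derivation, and it is essentially the classical one: realize $AD(\Omega,\zeta^n)$ as the orthogonal complement of $\mathrm{span}\{\Phi_1,\dots,\Phi_{n-1}\}$ inside $AD(\Omega,\zeta)$, write $M(\cdot,\zeta^n,\Omega)$ as $\Phi_n$ minus its projection onto that span, solve for the coefficients by Cramer's rule against the Gram matrix, and recognize the result as a cofactor expansion of the bordered determinant. I checked the identifications $\Phi_k'=\tilde K_{0,\overline{k-1}}(\cdot,\zeta)$, the Gram entries $\langle\Phi_i,\Phi_j\rangle=\tilde K_{j-1,\overline{i-1}}$, and the sign bookkeeping: the minor of the top-row entry in column $k$ is $(-1)^{n-2-k}\det B_k$ for $k\le n-2$ and $J_{n-2}$ for $k=n-1$, and after the extra minus sign from $-\sum c_{j+1}\tilde K_{0,\bar j}$ everything assembles to $\det A=(-1)^{n-1}J_{n-2}\,\tilde K_{\Omega,n}(z,\zeta)$, which is the claimed formula. (Your phrasing that $(-1)^k\cdot(-1)^{n-2-k}$ "produces" $(-1)^{n-1}$ elides that last minus sign, but the conclusion is right.) This gives more than the paper does, since it also shows \emph{why} $J_{n-2}>0$ is the relevant nondegeneracy condition and where smoothness off $N_\Omega$ comes from.

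Two caveats. First, your linear-independence argument for the $\Phi_k$ via pairing with $(z-\zeta)^k$ is complete only for bounded $\Omega$; the parenthetical "a smooth cutoff around $\zeta$ is used" does not literally work, because the test functions must lie in $AD(\Omega,\zeta)$ and hence be holomorphic --- a cutoff of $(z-\zeta)^k$ is not. For a general domain with $N_\Omega\neq\Omega$ one needs a holomorphic substitute (this is precisely the part of the statement that ties $J_n>0$ to $\zeta\notin N_\Omega$, and it is where Bergman's treatment does real work). Since the paper only ever applies the theorem to bounded smoothly bounded domains, this gap is harmless here, but it should be flagged as such rather than waved at. Second, note that the statement as printed says "$J_n>0$ for all $\zeta\in N_\Omega$", which must be a typo for $\zeta\notin N_\Omega$ (otherwise the subsequent smoothness claim on $\Omega\times(\Omega\setminus N_\Omega)$ makes no sense); your proof establishes the corrected version.
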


\medskip

Following Bell (\cite{Be1}), define classes of functions $\mathcal{A}, \mathcal B$ as follows. The class $\mathcal A$ is a subclass of meromorphic functions on $\Omega$ that consists of: 
\begin{enumerate}
    \item[(i)] $F_j'(z)$, $1 \le j \le n-1$, 
    \item[(ii)] $G_z(z,a)$ for a fixed point $a\in\overline{\Omega}$; here, $G$ is the classical Green's function on $\Omega$,
    \item[(iii)] $D_a G_z(z,a)$ where $D_a$ denotes a differential operator of the form $\frac{\partial^n}{\partial a^n}$ or $\frac{\partial^n}{\partial \bar{a}^n}$, and $a$ is a fixed point in $\overline{\Omega}$,
    \item[(iv)] $S_{\Omega}(z,a_1)\cdot S_{\Omega}(z,a_2)$ for fixed points $a_1,a_2\in\Omega$, and
    \item[(v)] linear combinations of functions above
\end{enumerate}

On the other hand, the class $\mathcal B$ is again a subclass of meromorphic functions on $\Omega$ and consists of:

\begin{enumerate}
    \item[(i)] $S_{\Omega}(z,a)$ or $L_{\Omega}(z,a)$ for fixed points $a\in\overline{\Omega}$,
    \item[(ii)] $\frac{\partial^m}{\partial\overline{a}^m}S_{\Omega}(z,a)$ or $\frac{\partial^m}{\partial\overline{a}^m}L_{\Omega}(z,a)$ for fixed points $a\in\overline{\Omega}$ and $m \ge 1$, and
    \item[(iii)] linear combinations of functions above.
\end{enumerate}

\begin{thm}\label{reduced kernel}
Let $\Omega\subset\mathbb{C}$ be a bounded $n$-connected domain with $C^{\infty}$ smooth boundary. Let $G_1$ and $G_2$ denote any two meromorphic functions on $\Omega$ that extend to the double of $\Omega$ to form a primitive pair, and let $A$ denote any function from the class $\mathcal{A}$ other than the zero function. The reduced Bergman kernel of $\Omega$ can be expressed as 
\[
\tilde{K}_{\Omega}(z,w)=
A(z)\overline{A(w)} \mathcal{R}(G_1(z),G_2(z),\overline{G_1(w)},\overline{G_2(w)})
\]
where $\mathcal{R}$ is a complex rational function of four complex variables.
\end{thm}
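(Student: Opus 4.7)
The plan is to deduce the statement from Bell's rational representation of the classical Bergman kernel $K_{\Om}$ in \cite{Be1}, by subtracting a finite-dimensional correction that lies in $\mathrm{Span}_{\mbb{C}}\{F_j'\}_{j=1}^{n-1}$. Concretely, I first establish the identity
\[
\tilde{K}_{\Om}(z,w) \;=\; K_{\Om}(z,w) \;-\; \sum_{j,k=1}^{n-1} d_{jk}\, F_j'(z)\, \ov{F_k'(w)},
\]
for suitable constants $d_{jk}$, and then divide through by $A(z)\ov{A(w)}$, recognizing each piece as rational in the primitive pair values.

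For the setup, every function in class $\mathcal{A}$ extends meromorphically to the double $\widehat{\Om}$: this follows for the building blocks (i)--(iv) from the reflection identities satisfied by $S_{\Om}$, $L_{\Om}$, $F_j'$, and the Green's function under the antiholomorphic involution $R$, and passes to linear combinations. Since $(G_1, G_2)$ is a primitive pair on $\widehat{\Om}$, every meromorphic function on $\widehat{\Om}$ lies in $\mbb{C}(G_1, G_2)$. In particular, for any nonzero $A \in \mathcal{A}$, each quotient $F_j'(z)/A(z)$ is a rational function of $G_1(z), G_2(z)$, and the same holds for the ratio of any two nonzero members of $\mathcal A$.

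For the correction identity, let $\tilde A^{2}(\Om) := \{g \in A^2(\Om) : g = F' \text{ for some } F \in \mathcal{O}(\Om)\}$ denote the reduced Bergman space. Applying Stokes's theorem to the $1$-form $\om_j\, \ov{F'}\, d\bar z$ yields
\[
\langle F_j', F' \rangle_{A^2} \;=\; \tfrac{i}{4}\, \ov{\oint_{\ga_j} F'(z)\, dz} \;=\; 0
\]
whenever $F$ is single-valued on $\Om$, so $F_j' \in (\tilde A^{2})^{\perp}$. The period map $g \mapsto (\oint_{\ga_k} g\, dz)_{k=1}^{n-1}$ identifies $A^2(\Om)/\tilde A^{2}(\Om) \cong \mbb{C}^{n-1}$, and the period matrix of $\{F_j'\}$ on the $n-1$ inner boundary loops is non-degenerate (classical); hence $\{F_j'\}_{j=1}^{n-1}$ is a basis of $(\tilde A^{2})^{\perp}$. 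Since $K_{\Om} - \tilde K_{\Om}$ is the reproducing kernel of this complement, the displayed identity follows with $[d_{jk}] = [\langle F_j', F_k' \rangle_{A^2}]^{-1}$.

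Dividing the identity by $A(z)\ov{A(w)}$ gives
\[
\frac{\tilde K_{\Om}(z,w)}{A(z)\ov{A(w)}} \;=\; \frac{K_{\Om}(z,w)}{A(z)\ov{A(w)}} \;-\; \sum_{j,k=1}^{n-1} d_{jk}\, \frac{F_j'(z)}{A(z)}\, \ov{\frac{F_k'(w)}{A(w)}}.
\]
By Bell's main theorem in \cite{Be1}, the first term on the right is rational in $G_1(z), G_2(z), \ov{G_1(w)}, \ov{G_2(w)}$ (applied directly for $A$, or reduced to a specific $A_0 \in \mathcal A$ via the quotient observation of the setup paragraph), and each summand of the double sum is of the same form by that paragraph; their difference is the desired rational function $\mathcal{R}$. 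The principal obstacle is the orthogonal decomposition step: verifying that $\{F_j'\}_{j=1}^{n-1}$ is a basis of $(\tilde A^{2})^{\perp}$ rests on both the Stokes--Green orthogonality computation and the non-degeneracy of the period matrix of $\{F_j'\}$. While classical, this has to be articulated carefully in the present setting; once in hand, the reduction to Bell's theorem for $K_{\Om}$ is essentially automatic.
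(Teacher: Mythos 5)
Your proposal is correct and follows essentially the same route as the paper: write $\tilde{K}_{\Omega}(z,w)=K_{\Omega}(z,w)+\sum c_{ij}F_i'(z)\overline{F_j'(w)}$ and then invoke Bell's representations $K_{\Omega}(z,w)=A(z)\overline{A(w)}R(G_1(z),G_2(z),\overline{G_1(w)},\overline{G_2(w)})$ and $F_j'=A\cdot R_j(G_1,G_2)$ from \cite{Be1}. The only difference is that you prove the orthogonal-decomposition identity (via the Stokes/period-matrix argument), whereas the paper simply cites it from Bergman's book, and you derive the rationality of $F_j'/A$ from the primitive-pair property rather than quoting it directly; both of these auxiliary facts are correctly handled.
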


\begin{proof}
Let $K_{\Omega}$ denote the Bergman kernel of $\Omega$. It is known that (see \cite{Bergman}), there exist constants $c_{ij}$, $1\leq i,j\leq n-1$ such that for $z,w\in\Omega$ 
\begin{equation}
\tilde{K}_{\Omega}(z,w) = K_{\Omega}(z,w) + \sum_{i,j=1}^{n-1} c_{ij} F_i'(z) \overline{F_j'(w)}.
\end{equation}
It follows from \cite{Be1} that for every $1\leq j\leq n-1$
\[
F_j'(z)=A(z)R_{j}(G_1(z),G_2(z))
\quad\text{and}\quad
K_{\Omega}(z,w)=A(z)\overline{A(w)} R(G_1(z),G_2(z),\overline{G_1(w)},\overline{G_2(w)})
\]
where $R_j$, $1\leq j\leq n-1$ are complex rational functions of two complex variables and $R$ is a complex rational function of four complex variables. Therefore, there exists a complex rational function $\mathcal{R}$ of four complex variables such that 
\begin{equation}
\tilde{K}_{\Omega}(z,w)=
A(z)\overline{A(w)} \mathcal{R}(G_1(z),G_2(z),\overline{G_1(w)},\overline{G_2(w)}).
\end{equation}
for $z, w \in \Omega$.
\end{proof}

\begin{cor}
Let $\Omega\subset\mathbb{C}$ be a bounded $n$-connected domain with $C^{\infty}$ smooth boundary. There exist points $A_1,A_2,A_3\in\Omega$ such that the reduced Bergman kernel $\tilde{K}_{\Omega}(z,w)$ is a rational combination of $S_{\Omega}(z,A_1),S_{\Omega}(z,A_2)$ and $S_{\Omega}(z,A_3)$, and the conjugates of $S_{\Omega}(w,A_1),S_{\Omega}(w,A_2)$ and $S_{\Omega}(w,A_3)$.
\end{cor}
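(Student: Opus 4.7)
The plan is to apply Theorem \ref{reduced kernel} with an explicit choice of data $(A, G_1, G_2)$ constructed entirely from unweighted Szeg\H{o} kernels, paralleling Bell's strategy in \cite{Be1} for the non-reduced Bergman and Szeg\H{o} kernels. Concretely, I would select three points $A_1, A_2, A_3 \in \Om$ so that the meromorphic functions
\[
G_1(z) := \frac{S_{\Om}(z, A_2)}{S_{\Om}(z, A_1)}, \qquad G_2(z) := \frac{S_{\Om}(z, A_3)}{S_{\Om}(z, A_1)}
\]
extend meromorphically to the double $\widehat{\Om}$ (which is ensured by Corollary \ref{Bell2} applied with $\varphi \equiv \mathbf{1}$) and together form a \emph{primitive pair} of meromorphic functions on $\widehat{\Om}$. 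Alongside these, I would set $A(z) := S_{\Om}(z, A_1)^2$, which lies in the class $\mathcal{A}$ by item (iv) of its definition (taking $a_1 = a_2 = A_1$).

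With this choice, Theorem \ref{reduced kernel} produces a rational function $\mathcal{R}$ of four complex variables such that
\[
\ti{K}_{\Om}(z, w) \;=\; S_{\Om}(z, A_1)^2 \, \ov{S_{\Om}(w, A_1)^2} \; \mathcal{R}\bigl(G_1(z), G_2(z), \ov{G_1(w)}, \ov{G_2(w)}\bigr).
\]
Substituting the definitions of $G_1$ and $G_2$ and clearing denominators exhibits the right-hand side as a rational expression in the three holomorphic functions $S_{\Om}(z, A_j)$, $j = 1, 2, 3$, and the three conjugates $\ov{S_{\Om}(w, A_j)}$, $j = 1, 2, 3$; this is precisely the representation claimed in the corollary.

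The main obstacle is the primitivity step: ensuring that some triple $(A_1, A_2, A_3)$ makes $(G_1, G_2)$ a primitive pair on $\widehat{\Om}$. Since each $G_j$ is a non-constant meromorphic function on the compact Riemann surface $\widehat{\Om}$, primitivity amounts to the map $z \mapsto (G_1(z), G_2(z))$ being generically injective on $\widehat{\Om}$, i.e.\ $\mbb{C}(\widehat{\Om}) = \mbb{C}(G_1, G_2)$. I would follow the reasoning in \cite{Be1}: pick $A_2$ and $A_3$ close enough to $\partial \Om$ that Theorem \ref{Bell1} (applied in the unweighted case) pins down the simple-zero structure of the Szeg\H{o} kernels $S_{\Om}(\cdot, A_j)$, and then invoke a standard genericity argument to conclude that for almost every such triple the induced map to $\mbb{CP}^1 \times \mbb{CP}^1$ separates points of $\widehat{\Om}$ outside a finite exceptional set. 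Once this genericity step is granted, the corollary follows directly from Theorem \ref{reduced kernel}.
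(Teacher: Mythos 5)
Your proposal is correct and follows essentially the same route as the paper: both arguments take a primitive pair on the double formed from ratios of unweighted Szeg\H{o} kernels at three points, choose a product of Szeg\H{o} kernels as the function $A$ from item (iv) of the class $\mathcal{A}$, and then apply Theorem \ref{reduced kernel} and clear denominators. The only difference is that the paper simply cites \cite{Be2} for the existence of the three points giving a primitive pair, whereas you sketch the underlying genericity argument; either way the corollary follows.
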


\begin{proof}
It is known (see \cite{Be2}) that there exist points $A_1$, $A_2$ and $A_3$ in $\Omega$ such that
\[
\frac{S_{\Omega}(z,A_1)}{S_{\Omega}(z,A_3)}\quad\text{and}\quad\frac{S_{\Omega}(z,A_2)}{S_{\Omega}(z,A_3)}
\]
extend to the double of $\Omega$ and form a primitive pair. Choosing $S_{\Omega}(z,A_i)S_{\Omega}(z,A_j)\in\mathcal{A}$ for $i,j\in\{1,2,3\}$ in Theorem \ref{reduced kernel} gives the result.
\end{proof}

It is known that (see \cite{Be1}) if $G_1$ and $G_2$ denote any two meromorphic functions on $\Omega$ that extend to the double of $\Omega$ to form a primitive pair, and $B$ denotes any function from the class $\mathcal{B}$ other than the zero function, then the Szeg\H{o} kernel can be expressed as 
\begin{equation}
S_{\Omega}(z,w)=
B(z)\overline{B(w)} R(G_1(z),G_2(z),\overline{G_1(w)},\overline{G_2(w)})
\end{equation}
where $R$ is a complex rational function of four complex variables. Using this, we get:

\begin{thm}
Let $\Omega\subset\mathbb{C}$ be a bounded $n$-connected domain with $C^{\infty}$ smooth boundary. There exist points $A_1,A_2,A_3$ such that for $n\geq 1$, and a fixed $w \in \Omega$, the higher order reduced Bergman kernels $\tilde{K}_{\Omega,n}(z,w)$ are rational combinations $S_{\Omega}(z,A_1),S_{\Omega}(z,A_2)$ and $S_{\Omega}(z,A_3)$.
\end{thm}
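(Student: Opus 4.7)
The plan is to combine the determinantal identity (\ref{det}) with the already-established representation of the reduced Bergman kernel as a rational combination of three Szeg\H{o} kernels. The case $n=1$ is precisely the corollary to Theorem \ref{reduced kernel}, so the real content is in $n\ge 2$. I would fix once and for all the points $A_1,A_2,A_3 \in \Omega$ supplied by that corollary, so that
\[
G_1(z) = \frac{S_\Omega(z,A_1)}{S_\Omega(z,A_3)}, \qquad G_2(z) = \frac{S_\Omega(z,A_2)}{S_\Omega(z,A_3)}
\]
extend to the double $\hat\Omega$ and form a primitive pair. Choosing $A(z) = S_\Omega(z,A_1)\,S_\Omega(z,A_2) \in \mathcal A$, Theorem \ref{reduced kernel} gives a complex rational function $\mathcal R$ of four variables with
\[
\tilde K_\Omega(z,w) = A(z)\,\overline{A(w)}\,\mathcal R\bigl(G_1(z), G_2(z), \overline{G_1(w)}, \overline{G_2(w)}\bigr). \quad(\star)
\]

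The next step is to expand the $n\times n$ determinant in (\ref{det}) along its first row, which is the only row that depends on $z$. Rows $2$ through $n$ are built from the constants $\tilde K_{j,\bar k}(w,w)$ and the prefactor involves $J_{n-2}(w)$, which is strictly positive since $\Omega$ is bounded, so cofactor expansion gives
\[
\tilde K_{\Omega,n}(z,w) = \sum_{k=0}^{n-1} \alpha_k(w)\, \tilde K_{0,\bar k}(z,w), \qquad \tilde K_{0,\bar k}(z,w) = \frac{\partial^k}{\partial \bar w^k}\tilde K_\Omega(z,w),
\]
with coefficients $\alpha_k(w)$ depending only on $w$. It therefore suffices to show that, for each fixed $w$ and each $k$, the function $z \mapsto \tilde K_{0,\bar k}(z,w)$ is a rational combination of $S_\Omega(z,A_1), S_\Omega(z,A_2), S_\Omega(z,A_3)$.

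This last point will follow by differentiating $(\star)$ a total of $k$ times in $\bar w$. The crucial observation is that the factors $A(z), G_1(z), G_2(z)$ carry no dependence on $w$, so the chain rule shows that $\partial^k/\partial\bar w^k$ touches only $\overline{A(w)}$ and the last two slots of $\mathcal R$. For fixed $w$ the result is therefore of the form $A(z)\,\mathcal R_{k,w}\bigl(G_1(z), G_2(z)\bigr)$, where $\mathcal R_{k,w}$ is a complex rational function of two variables produced by partially differentiating $\mathcal R$ in its last two arguments and then freezing those arguments at the constants $\overline{G_1(w)}, \overline{G_2(w)}$, multiplied by constants built from $\overline{A(w)}$ and $\bar w$-derivatives of $\overline{G_j(w)}$. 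Substituting the definitions of $A, G_1, G_2$ realizes this as a rational expression in $S_\Omega(z,A_1), S_\Omega(z,A_2), S_\Omega(z,A_3)$, and summing with the $\alpha_k(w)$ yields the theorem. The main obstacle I anticipate is purely bookkeeping: recording the iterated chain rule cleanly enough to see that no $w$-dependent quantity ever slips into the first two slots of $\mathcal R$, so that for each fixed $w$ one is genuinely left with a rational function of $G_1(z), G_2(z)$ alone. Once this is checked, the rest is a straightforward determinant expansion.
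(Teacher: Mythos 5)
Your proposal is correct, and it establishes the key intermediate fact --- that for each fixed $w$ the functions $\tilde K_{0,\bar k}(z,w)=\frac{\partial^k}{\partial\bar w^k}\tilde K_{\Omega}(z,w)$ are rational combinations of $S_{\Omega}(z,A_1)$, $S_{\Omega}(z,A_2)$, $S_{\Omega}(z,A_3)$ --- by a genuinely shorter route than the paper. You differentiate the four-variable rational representation of $\tilde K_{\Omega}$ already furnished by Theorem \ref{reduced kernel}, observing that $\partial/\partial\bar w$ never touches $A(z)$, $G_1(z)$, $G_2(z)$, so for frozen $w$ one is left with $A(z)$ times a rational function of $G_1(z),G_2(z)$ with constant coefficients. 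The paper instead rebuilds this fact from the ground up: it differentiates the Ahlfors-map expansion $S_{\Omega}(z,w)=(1-f_a(z)\overline{f_a(w)})^{-1}\bigl(c_0S_{\Omega}(z,a)\overline{S_{\Omega}(w,a)}+\sum c_{ij}S_{\Omega}(z,a_i)\overline{S_{\Omega}(w,a_j)}\bigr)$ in $\bar w$ (using that $f_a$ extends to the double and is therefore rational in the primitive pair), then transfers the conclusion to $K_{\Omega}$ via $K_{\Omega}=4\pi S_{\Omega}^2+\sum A_{ij}F_i'\overline{F_j'}$ and finally to $\tilde K_{\Omega}$ via $\tilde K_{\Omega}=K_{\Omega}+\sum c_{ij}F_i'\overline{F_j'}$. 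Both arguments conclude identically with the cofactor expansion of the determinant in (\ref{det}) along its unique $z$-dependent row, the denominator $J_{n-2}$ being nonzero because $\Omega$ is bounded. Your version is more economical since Theorem \ref{reduced kernel} has already packaged the structure of $\tilde K_{\Omega}$; the paper's version has the mild advantage of displaying explicitly which auxiliary objects ($f_a$, the $S_{\Omega}(\cdot,a_i)$, the $F_j'$) enter the rational expression. The one point you should make explicit when writing this up is that your identity for $\tilde K_{\Omega}$ is an identity of functions meromorphic in $z$ and antimeromorphic in $w$, so term-by-term $\bar w$-differentiation is legitimate off the (removable) singular set of $\mathcal R$ --- the same implicit step the paper takes with its own representation.
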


\begin{proof}
Choose $A_1$, $A_2$ and $A_3$ in $\Omega$ such that
\[
\frac{S_{\Omega}(z,A_1)}{S_{\Omega}(z,A_3)}\quad\text{and}\quad\frac{S_{\Omega}(z,A_2)}{S_{\Omega}(z,A_3)}
\]
extend to the double of $\Omega$ and form a primitive pair. Therefore, it follows that
the Szeg\H{o} kernel $S_{\Omega}(z,w)$ is a rational combination of $S_{\Omega}(z,A_1),S_{\Omega}(z,A_2)$ and $S_{\Omega}(z,A_3)$, and the conjuagates of $S_{\Omega}(w,A_1),S_{\Omega}(w,A_2)$ and $S_{\Omega}(w,A_3)$. Furthermore, the functions $F_j'(z)$ are rational combinations of $S_{\Omega}(z,A_1),S_{\Omega}(z,A_2)$ and $S_{\Omega}(z,A_3)$.

\medskip

Choose $a\in\Omega$ close enough to the boundary such that $S_{\Omega}(\cdot,a)$ has simple zeroes $a_1,\ldots,a_{n-1}$ and let $f_a$ denote the Ahlfors map associated with $a\in\Omega$. Then
\[
S_{\Omega}(z,w)=\frac{1}{1-f_a(z)f_a(w)}
\left(
c_0 S_{\Omega}(z,a)\overline{S_{\Omega}(w,a)}+\sum_{i,j=1}^{n-1}c_{ij}S_{\Omega}(z,a_i)\overline{S_{\Omega}(w,a_j)}
\right)
\]
where $c_0 = 1/S(a,a)$ and $[c_{ij}]=[S(a_i,a_j)]^{-1}$ (see \cite{Be4}).
The Ahlfors map $f_a$ is a proper holomorphic map on $\Omega$ and thus extends meromorphically to the double of $\Omega$, and hence can be written as a rational combination of the primitive pair. 
For $m\in\mathbb{Z}^+$, we therefore see that $\frac{\partial^m}{\partial \ov{w}^m}S_{\Omega}(z,w)$ is rational combination of $S_{\Omega}(z,A_1),S_{\Omega}(z,A_2)$ and $S_{\Omega}(z,A_3)$ for a fixed point $w\in\Omega$. Recall that the Bergman kernel and the Szegő kernel are related by
\[
K_{\Omega}(z,w)=4\pi S_{\Omega}(z,w)^2+\sum_{i,j=1}^{n-1}A_{ij}F_i'(z)\overline{F_j'(w)}, \quad z,w\in\Omega
\]
for some constants $A_{ij}$. Hence, we conclude that $K_{\Omega,n}(z,w)=\frac{\partial^m}{\partial \ov{w}^n}K_{\Omega}(z,w)$ is a rational combination of $S_{\Omega}(z,A_1),S_{\Omega}(z,A_2)$ and $S_{\Omega}(z,A_3)$ for all $n\geq 1$ and every fixed point $w\in\Omega$.

\medskip

Now, from the relation
\[
\tilde{K}_{\Omega}(z,w) = K_{\Omega}(z,w) + \sum_{i,j=1}^{n-1} c_{ij} F_i'(z) \overline{F_j'(w)},
\]
it follows that $\frac{\partial^m}{\partial \ov{w}^m}\tilde{K}_{\Omega}(z,w)$ is also a rational combination of $S_{\Omega}(z,A_1),S_{\Omega}(z,A_2)$ and $S_{\Omega}(z,A_3)$ for all $m\geq 1$ and every fixed point $w\in\Omega$. Finally, we conclude from the determinant formula (\ref{det}) that 
$\tilde{K}_{\Omega,n}(z,w)$ is a rational combination of $S_{\Omega}(z,A_1),S_{\Omega}(z,A_2)$ and $S_{\Omega}(z,A_3)$ for all $n> 1$ and every fixed point $w\in\Omega$.
\end{proof}


\end{document}